\newcommand{\pvi}[1]{
\left\{\begin{array}{lcc}
#1
\end{array}
\right.
}
\newtheorem{theorem}{Theorem}
\newtheorem{lemma}[theorem]{Lemma}
\newtheorem{proposition}[theorem]{Proposition}
\newtheorem{corollary}[theorem]{Corollary}
\theoremstyle{definition}
\newtheorem{remarkx}[theorem]{Remark}
\newenvironment{remark}
  {\pushQED{\qed}\remarkx}
  {\popQED\endremarkx}
\DeclarePairedDelimiter{\norm}{\lVert}{\rVert}
\DeclarePairedDelimiter{\abs}{\lvert}{\rvert}
\newcommand{\suchthat}{\ifnum\currentgrouptype=16 \mathrel{}\middle|\mathrel{}\else\mid\fi}
\def\R{{\mathbb R}}    
\newcommand{\customlabel}[2]{%
   \protected@write \@auxout {}{\string \newlabel {#1}{{#2}{\thepage}{#2}{#1}{}} }%
   \hypertarget{#1}{#2}
}
\begin{document}

\setlist[enumerate]{label={(\alph*)}}
\setlist[enumerate, 2]{label={(\alph{enumi}-\roman*)}}
\newlist{listhypo}{enumerate}{1}
\setlist[listhypo]{label={\textup{(H\arabic*)}}, ref={\textup{(H\arabic*)}}}

\title{\bf Stability analysis of a linear system coupling wave and heat equations with different time scales\footnotemark[1]}

\author{Gonzalo Arias\footnotemark[2], Eduardo Cerpa\footnotemark[3], \hspace{.1cm }and Swann Marx\footnotemark[4]}

\maketitle

\footnotetext[1]{This work has been partially supported by ANID Millennium Science Initiative
Program trough Millennium Nucleus for Applied Control and Inverse Problems NCN19-161 and STIC-Amsud C-CAIT.
}
\footnotetext[2]{Facultad de Matem\'aticas, Pontificia Universidad Católica de Chile, Avda.
Vicu\~na Mackenna 4860, Macul, Santiago, Chile. E.mail: ngonzaloandres@uc.cl }

\footnotetext[3]{Instituto de Ingenier\'ia Matem\'atica y Computacional, Facultad de Matem\'aticas, Pontificia Universidad Católica de Chile, Avda.
Vicu\~na Mackenna 4860, Macul, Santiago, Chile. E.mail: eduardo.cerpa@uc.cl}

\footnotetext[4]{LS2N, \'Ecole Centrale de Nantes \& CNRS UMR 6004, F-44000 Nantes, France. E.mail: swann.marx@ls2n.fr}

\maketitle
\begin{abstract}
In this paper we consider a system coupling a wave equation with a heat equation through its boundary conditions. The existence of a small parameter in the heat equation, as a factor multiplying the time derivative, implies the existence of different time scales between the constituents of the system. This suggests the idea of applying
a singular perturbation method to study stability properties. In fact, we prove that this method works for the system under study. Using this strategy, we get the stability of the system and a Tikhonov theorem, which allows us to approximate the solution of the coupled system using some appropriate uncoupled subsystems.
\end{abstract}

\section{Introduction}

There are many models in science and engineering coupling heat equations together with wave equations. For instance, in \cite{green1991re} the authors provide a model for thermoelasticity of this nature, which is called a thermoelasticity model of type II. These models have been studied in past years, see \cite{han2017decay} and the references therein for results regarding the asymptotic behaviour for this systems. On the other hand, and as it is stated in \cite{biot1941general}, the physical process of an earthquake is modelled by a
three dimensional nonlinear coupled heat-wave PDE. A one dimensional simplification of this earthquake model is considered in \cite{gutierrez2022advances}, where the authors implement a sliding mode controller to avoid instabilities, i.e., avoiding earthquakes, and whose design is based on the diffusion process. Also, fluid-structure interaction can be modeled as a coupled system of this type. We refer to the article \cite{MR2289863}, and the references therein, for a deep understanding of the asymptotic behaviour of heat-wave system arising in fluid-structure interaction.

For coupled systems it may happen that one constituent is better described by using a different time scale. For example, in  \cite[Example 11.1]{khalil2002nonlinear}, it is shown that a model for electrical motors have this property, where the electrical component is faster than the mechanical one, and both phenomena are described by ODEs. Regarding the PDE case, when using the Saint-Venant-Exner equations to study the transport of a sediment in a flow on a reach, it may happen that the sediment dynamic is slower than the flow dynamic (see for instance \cite{hudson2003formulations} or \cite[Section 1.5]{bible_coron}), yielding a two time scale phenomenon.

To address the stability and control design for finite dimensional systems, singular perturbation and separation of time scales have been widely developed in the last decades, see for instance \cite{kokotovic1968singular,kokotovic1972singular,kokotovic1975singular,kokotovic1999singular}. ODEs are finite dimensional systems due to the presence of only one variable, singular perturbation problems for ODEs are regarded generally in the case when one has a coupled ODE system, and one of the components admits a different time scale.

Singular perturbation problems for PDEs are more general, since the singularity may arise by perturbing a high order derivative. For example, if the perturbation parameter appears as a factor multiplying the second order terms of an elliptic operator, and when considering this parameter to vanish, the operator degenerates into a non-elliptic one, see for instance, a survey \cite{kadalbajoo2003singularly} about singular perturbation problems for PDEs. 

In the study of coupled systems with different time scales, and at a first glance, one can think that the fast dynamic may only be important for a short period of time. This is the cornerstone of the singular perturbation theory for systems with different time scales, which we will refer to, simply, as singular perturbation method (SPM, for short). This theory, roughly speaking, states in which situations the fast dynamic can be replaced by some limit process, neglecting the effects of the fast time scale. A general theory for nonlinear finite-dimensional systems is developed in \cite[Chapter 1]{kokotovic1999singular} and \cite[Chapter 11]{khalil2002nonlinear}, where the answer to this issue is given by Tikhonov's theorem. The main idea of the SPM is to decouple a system when one of the equations is fast enough. The decoupling gives two approximated subsystems, each of them, capturing the slow and fast behaviour coming from the full system. The approximated slow system is called \textit{reduced order system}, and after the use of a suitable time-rescaling of the original system, we obtain the approximated fast system, which is called \textit{boundary-layer system}. In recent years, the SPM has been applied to different infinite-dimensional coupled systems with different time scales. Among those, we mention \cite{cerpa2019singular,cerpa-prieur2017,tang2017stability,arias2023frequency} for systems coupling an ODE with either a hyperbolic system or a wave equation, \cite{tang2015singular,Tang2019Singular,tang2015tikhonov} for hyperbolic systems with different time scales and \cite{marx2022singular} for systems coupling a KdV equation with an ODE. And, as it is shown in \cite{tang2015tikhonov,cerpa-prieur2017,tang2017stability}, this technique may fail for some infinite-dimensional systems.

The purpose of this paper is to study the stability properties of a system composed by a heat equation coupled with a wave equation through its boundary conditions, when the dynamics have different time scales. This is the first article addressing this issue for a system coupling parabolic and hyperbolic PDEs. Being precise, let us consider the following system in singularly perturbed form
\begin{equation}
\label{system}
\begin{cases}
    u_{tt}  = u_{xx}, & (x,t)\in (0,1)\times (0,\infty),  \\
    u(0,t) = 0, & t\in (0,\infty),\\
    u_x(1,t) = - a u_t(1,t) + b p(0,t), & t\in (0,\infty),\\
    u(x,0) = u_0(x),\: u_t(x,0) = u_1(x), & x\in (0,1),\\
    \varepsilon p_t  = p_{xx},& (x,t)\in (0,1)\times (0,\infty),\\
    p_x(0,t) = c p(0,t),& t\in (0,\infty),\\
    p_x(1,t) = d u(1,t),& t\in (0,\infty),\\
    p(x,0) = p_0(x),& x\in (0,1),
\end{cases}
\end{equation}
where $a,b,c,d\in \mathbb{R}$, $\varepsilon>0$. Up to our knowledge, we do not know any result regarding the well-posedness and the asymptotic behaviour of \eqref{system}. In order to do so, we employ a semigroup approach to show that \eqref{system} is well-posed on some appropriate spaces, provided that the initial condition $(u_0,u_1,p_0)$ are regular enough. Then, we
analyse the asymptotic behaviour of solutions of \eqref{system} by an energy approach. Therefore, we show that for any $\varepsilon>0$, the full system \eqref{system} is exponentially stable. This results are proven by imposing some restrictions on the parameters $a,b,c,d\in \mathbb{R}$. Also, and since we are interested in the case where the constituents equations of \eqref{system} have different time scales, we are going to suppose that $\varepsilon$ is small enough. Therefore, we are going to apply the SPM to get the stability of \eqref{system} through its subsystems, and give a Tikhonov's approximation result.

As explained before, Tikhonov's approximation has to be done by using the reduced order and boundary-layer systems given by the method. For \eqref{system}, the reduced order system defined by
\begin{equation}
\begin{cases}
\bar u_{tt}  = \bar u_{xx},&  (x,t)\in(0,1)\times(0,\infty),  \\
     \bar u(0,t)  = 0,&  t\in (0,\infty) \\
     \bar u_x(1,t) = - a \bar u_t(1,t) + \frac{bd}{c} \bar u(1,t),&  t\in (0,\infty)\\
     \bar u(x,0) = \bar u_0(x),\: \bar u_t(x,0) = \bar u_1(x),&  x\in(0,1),
\end{cases}
\label{eq:reduced}
\end{equation}
with initial conditions $\bar u_0\in H^1(0,1)$ such that $u_0(0)=0$ and $\bar u_1\in L^2(0,1)$. On the other hand, the boundary-layer system, in the time variable $\tau=\tfrac{t}{\varepsilon}$, is defined through
\begin{equation}
\begin{cases}
    \bar p_\tau  = \bar p_{xx}, & (x,\tau)\in (0,1)\times (0,\infty), \\
    \bar p_x(0,\tau) = c \bar p(0,\tau), & \tau\in(0,\infty),\\
    \bar p_x(1,\tau) = 0,& \tau\in(0,\infty),\\
    \bar p(x,0) = \bar p_0(x),& x\in(0,1),
\end{cases}
\label{boundary_layer}
\end{equation}
with initial condition $\bar p_0 \in L^2(0,1)$. In Section \ref{Stab}, we will derive those subsystems.

Now, we are able to state the main results of this article, the first one being the stability of system \eqref{system}, independent of the value of $\varepsilon>0$. 
\begin{theorem}
\label{claim_stab}
Let $\varepsilon>0$, and $a,b,c,d \in \mathbb{R}$ satisfying
\begin{itemize}
    \item[(i)] $a\in[\eta^{-1},\eta]\backslash \{1\}$ where $\eta=(\sqrt{3}-1)(\sqrt{3}+1)^{-1} $,
    \item[(ii)] $\abs{\tfrac{bd}{c}}\leq 1$,
    \item[(iii)] $c\geq \tfrac{\pi^2}{8}$,
    \item[(iv)] $1+2(\sinh{\mu}+2\cosh{\mu})b^2\leq c$, where $\mu \leq \mu^\ast= \ln{\left(\tfrac{\abs{1+a}}{\sqrt{2}\abs{1-a}}\right)}$,
    \item[(v)] $\abs{d} \leq 2\sqrt{\mu e^{-\mu}}$, for $\mu \leq \mu^\ast$.
\end{itemize}
There exists a unique mild solution $(u,u_t,p)\in C([0,\infty); H^1(0,1)\times L^2(0,1)\times L^2(0,1))$ to \eqref{system} provided that $(u_0,u_1,p_0)\in H^1(0,1)\times L^2(0,1)\times L^2(0,1)$. Moreover, if $d,c$ satisfies $c\geq \frac{\pi^2}{4}$ and $\abs{d}\leq \sqrt{\mu e^{-\mu}}$, then the origin of \eqref{system} is exponentially stable in the space $H^1(0,1)\times L^2(0,1) \times L^2(0,1)$.
\end{theorem}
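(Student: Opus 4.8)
The plan is to handle both assertions with an abstract semigroup formulation followed by a Lyapunov/multiplier analysis. First I would recast \eqref{system} as an abstract Cauchy problem $\dot z=\mathcal Az$, $z(0)=z_0$, on the Hilbert space $\mathcal H=V\times L^2(0,1)\times L^2(0,1)$ with $V=\{u\in H^1(0,1):u(0)=0\}$ and state $z=(u,u_t,p)$, where $\mathcal A(u,v,p)=(v,u_{xx},\varepsilon^{-1}p_{xx})$ and $D(\mathcal A)$ encodes the boundary conditions of \eqref{system} (namely $u(0)=v(0)=0$, $u_x(1)=-av(1)+bp(0)$, $p_x(0)=cp(0)$, $p_x(1)=du(1)$, with $u,p\in H^2$, $v\in V$). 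For well-posedness I would invoke the Lumer--Phillips theorem on $\mathcal H$ equipped with an energy-type inner product. Computing $\langle\mathcal Az,z\rangle$ reproduces the boundary terms found below, so the cross terms $bp(0)v(1)$ and $dp(1)u(1)$ obstruct outright dissipativity; under (i)--(v) they are absorbed by $-av(1)^2$, $-cp(0)^2$ and $-\int_0^1 p_x^2\,dx$ via Young's inequality and a trace bound, giving $\langle\mathcal Az,z\rangle\le\omega\|z\|^2$ for some $\omega\ge0$, i.e. quasi-dissipativity. The range condition---surjectivity of $\lambda I-\mathcal A$ for one $\lambda>\omega$---reduces, after eliminating $v=\lambda u-f$, to a coupled elliptic system in $(u,p)$ with the prescribed boundary conditions, which I would solve weakly by Lax--Milgram on $V\times H^1(0,1)$ and then upgrade to $H^2$ by elliptic regularity. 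Lumer--Phillips then yields a $C_0$-semigroup, hence the unique mild solution with the asserted continuity.

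For exponential stability I would build a functional equivalent to $\|z\|_{\mathcal H}^2$ and prove $\dot V\le-\kappa V$ along regular solutions, extending to mild solutions by density. The starting point is the energy identity obtained by multiplying the wave equation by $u_t$, the heat equation by $p$, and integrating by parts:
\begin{equation*}
\dot E=-a\,u_t(1)^2+b\,p(0)u_t(1)+d\,p(1)u(1)-c\,p(0)^2-\int_0^1 p_x^2\,dx,\qquad E=\tfrac12\!\int_0^1\!(u_x^2+u_t^2)\,dx+\tfrac{\varepsilon}{2}\!\int_0^1\! p^2\,dx.
\end{equation*}
The heat dissipation $-c\,p(0)^2-\int_0^1 p_x^2\,dx$ is coercive in $p$: a Robin--Neumann Poincar\'e/Wirtinger inequality, which holds with a usable constant precisely once $c$ exceeds the thresholds $\pi^2/8$ of (iii) (resp. $\pi^2/4$ in the stability hypothesis), gives $\int_0^1 p_x^2\,dx+c\,p(0)^2\ge\nu\int_0^1 p^2\,dx$ with $\nu>0$, and together with the trace bound $p(1)^2\le 2\int_0^1 p^2\,dx+2\int_0^1 p_x^2\,dx$ it controls $p(1)$.

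Since $E$ alone yields no decay of the wave energy, I would add the weighted multiplier $\Psi=\int_0^1 e^{\mu x}u_x u_t\,dx$ with $0<\mu\le\mu^\ast$; using $u_t(0)=0$ gives $\dot\Psi=\tfrac12 e^{\mu}\big(u_t(1)^2+u_x(1)^2\big)-\tfrac12 u_x(0)^2-\tfrac\mu2\int_0^1 e^{\mu x}(u_t^2+u_x^2)\,dx$, so the last term is $\le-\mu E_u$ and supplies genuine coercivity of the full wave energy at rate $\mu$, at the price of boundary terms amplified by $e^{\mu}$. Setting $V=E+\gamma\Psi$ with $\gamma e^{\mu}<1$ keeps $V$ equivalent to $E$. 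Substituting $u_x(1)=-au_t(1)+bp(0)$ turns the boundary contributions into a quadratic form in $(u_t(1),p(0))$ whose domination by $-au_t(1)^2-cp(0)^2$ requires the negative-definiteness encoded by (i) and the threshold $\mu^\ast=\ln(|1+a|/(\sqrt2|1-a|))$ (here $a\neq1$ is needed), together with the smallness $\gamma=O(e^{-\mu})$ so the $e^{\mu}$-weighted terms are tamed; the residual $b^2p(0)^2$ is absorbed exactly when (iv) holds, which is the source of the factor $\sinh\mu+2\cosh\mu$. Finally the coupling $d\,p(1)u(1)$ is split by Young's inequality: its $p(1)^2$ part is absorbed by the heat dissipation and its $u(1)^2\le\int_0^1 u_x^2\,dx\le 2E_u$ part by $-\gamma\mu E_u$; since $\gamma\sim e^{-\mu}$, closing this balance forces $d^2\lesssim\mu e^{-\mu}$, i.e. the hypothesis $|d|\le\sqrt{\mu e^{-\mu}}$. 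Choosing $\mu,\gamma$ and the Young weights so that every quadratic form is negative definite gives $\dot V\le-\kappa V$, hence $E(t)\le Ce^{-\kappa t}E(0)$.

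The main obstacle is the boundary coupling $d\,p(1)u(1)$. Unlike $b\,p(0)u_t(1)$, which pairs a heat trace with the dissipative wave trace $u_t(1)$, this term links the heat trace $p(1)$ with the \emph{undamped} displacement trace $u(1)$, whose only coercivity comes from the multiplier-generated $-\gamma\mu E_u$. Because $\gamma$ must be kept of order $e^{-\mu}$ to dominate the $e^{\mu}$-amplified boundary terms, the coercivity available to absorb $u(1)^2$ is only of order $\mu e^{-\mu}$, which is exactly what $|d|\le\sqrt{\mu e^{-\mu}}$ matches (the relaxed $|d|\le2\sqrt{\mu e^{-\mu}}$ sufficing for quasi-dissipativity and well-posedness). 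The delicate point is thus to select $\mu\le\mu^\ast$, the weight $\gamma$, and the Young parameters \emph{simultaneously} so that the wave coercivity, the Robin--Neumann spectral gap governed by $c$, and the two boundary cross terms all close into one negative-definite estimate; this joint balancing, rather than any single inequality, is the crux of the argument.
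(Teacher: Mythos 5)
Your proposal is correct in substance and rests on the same two pillars as the paper --- Lumer--Phillips for well-posedness and an exponentially weighted multiplier estimate for decay --- but it packages them differently, and the difference is worth recording. The paper never works with the flat energy: it equips the state space with the Riemann-invariant inner product $\left((u,v),(\tilde u,\tilde v)\right)_H=2\int_0^1 e^{\mu x}(u_x+v)(\tilde u_x+\tilde v)+e^{-\mu x}(u_x-v)(\tilde u_x-\tilde v)\,dx$ (plus $\varepsilon\int_0^1 p\tilde p\,dx$ for the heat component), under which $A_\varepsilon$ is \emph{genuinely} maximal dissipative under (i)--(v): the cross terms $bp(0)v(1)$ and $dp(1)u(1)$ are absorbed by exactly the Young/trace balancing you describe, with the explicit choice $\delta=4\mu e^{-\mu}/d^2$, and maximality is checked at $\lambda=0$ after a cascade change of variables decoupling $p$ from $u$, followed by Lax--Milgram. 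The stability statement is then literally the same computation applied to $E=\frac12\norm{\cdot}^2_{\mathcal H}$, giving $\dot E\le-\min\{\mu/2,\pi^2/8\}E$ once $c\ge\pi^2/4$ and $\abs{d}\le\sqrt{\mu e^{-\mu}}$. Your route keeps the unweighted energy, so you obtain only quasi-dissipativity and must verify the range condition at large $\lambda$ (your Lax--Milgram plan does close, since for $\lambda$ large the boundary cross terms are lower order relative to $\lambda^2\norm{u}^2+\norm{u_x}^2$ and the heat terms), and you then run a second, separate Lyapunov computation for $V=E+\gamma\Psi$. This is the classical perturbed-energy method, and it works: expanding the paper's norm shows it is proportional to $\int_0^1\cosh(\mu x)(u_x^2+u_t^2)\,dx+2\int_0^1\sinh(\mu x)u_xu_t\,dx$, i.e.\ it belongs to the same family ``weighted energy plus cross multiplier'' as your $V$; as a consistency check, taking $\gamma e^\mu=\tfrac12$ in your scheme makes the coefficient of $u_t(1)^2$ equal to $\tfrac14\bigl((a-2)^2-3\bigr)$, which is negative exactly for $a\in(2-\sqrt3,\,2+\sqrt3)$, the interval of hypothesis (i) (note the paper's $[\eta^{-1},\eta]$ is written backwards; $\eta=2-\sqrt3$). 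What the paper's packaging buys is economy: a single inner product serves both well-posedness (a contraction semigroup, no growth bound $\omega$) and stability, and the theorem's exact constants fall out of one Young step with $\delta=\tfrac12$. What yours buys is flexibility, at the price of an extra parameter $\gamma$ to tune; carried out literally, your balancing of $dp(1)u(1)$ against $-\gamma\mu E_u$ and the heat dissipation reproduces the hypotheses only up to absolute constants (e.g.\ it may demand $\abs{d}\le\tfrac12\sqrt{\mu e^{-\mu}}$ rather than $\abs{d}\le\sqrt{\mu e^{-\mu}}$), so to recover the theorem with its stated constants you would have to optimize $\gamma$ and the Young weights jointly --- which is precisely the optimization the paper's choice of norm performs implicitly.
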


\noindent This result is proven in Section \ref{analysis}, when analyzing the well-posedness and exponential stability of \eqref{system}.

\begin{remarkx}
\label{qmu}
The quantity $q(a,\mu):= -\frac32 e^{\mu}(1-a)^2 + \frac 12 e^{-\mu}(1+a)^2$ appears in the proof of Theorem \ref{claim_stab}, and it is non-negative whenever $\mu\leq \mu^\ast$ and $a\in \left[\eta^{-1},\eta\right]\backslash \{1\}$. Moreover, $q(a,\mu)=0$ if and only if $\mu=\mu^\ast$.
\end{remarkx}

\begin{remark}
Even if Theorem \ref{claim_stab} shows the exponential stability of \eqref{system}, the SPM allows us to unify the stability analysis in the space $H^1(0,1)\times L^2(0,1)\times L^2(0,1)$ in Theorem \ref{full_stab} with the
Tikhonov result in Theorem \ref{thikonov}. This mean that we can approximate the system \eqref{system} by the subsystems \eqref{eq:reduced} and \eqref{boundary_layer}, whenever $\varepsilon>0$ is small enough. Moreover, the SPM gives a stronger exponential stability result for \eqref{system}, in the sense that the $L^2(0,1)-$norm of the derivative $p_x(\cdot, x,\varepsilon)$ also decays. Therefore, the origin of \eqref{system} is exponentially stable in $H^1(0,1)\times L^2(0,1)\times H^1(0,1)$. This stability result is not as direct as the property stated in Theorem \ref{claim_stab}.
\end{remark}

\begin{theorem}
\label{full_stab}
Let $a,b,c,d$ satisfy $(i)-(v)$ in Theorem \ref{claim_stab}. Suppose that $b,c,d\in \R$ are such that $\frac{\abs{bd}}{c} \leq \sqrt{\mu e^{-\mu}(F(\mu))^{-1}}, $ where $F(\mu)=2\sinh{(\mu)}+10\cosh{(\mu)}$.
\begin{itemize}
    \item[(i)] If $b,c\in\mathbb{R}$ satisfy $\frac{\pi^2}{4}+F(\mu)b^2\leq c$, then there exists $\varepsilon^\ast_1,C_1>0$ such that for any $\varepsilon\in(0,\varepsilon^\ast_1)$, and for all $(u_0,u_1,p_0)\in H^1(0,1)\times L^2(0,1)\times L^2(0,1)$, the solution 
    to \eqref{system} satisfies
    \begin{equation}
    \label{l2-decay}
    \norm{(u(t),u_t(t),p(t))}_{H^1(0,1)\times L^2(0,1)\times L^2(0,1)} \leq C_1 e^{-\frac{\mu}{4} t} \norm{(u_0,u_1,p_0)}_{H^1(0,1)\times L^2(0,1)\times L^2(0,1)}, \hspace{.2cm} \forall t\geq 0.
    \end{equation}
    \item[(ii)]If $b,c\in\mathbb{R}$ satisfy $3F(\mu)b^2\leq c$.
    Then there exists $\varepsilon^\ast_2,C_2>0$ such that for any $\varepsilon\in(0,\varepsilon^\ast_2)$ and for all $(u_0,u_1,p_0)\in H^1(0,1)\times L^2(0,1)\times H^1(0,1)$, the solution 
    to \eqref{system} satisfies
    \begin{equation}
    \label{h1-decay}
    \norm{(u(t),u_t(t),p(t))}_{H^1(0,1)\times L^2(0,1)\times H^1(0,1)} \leq C_2 e^{-\frac{\mu}{4} t} \norm{(u_0,u_1,p_0)}_{H^1(0,1)\times L^2(0,1)\times H^1(0,1)}, \hspace{.2cm} \forall t\geq 0.
\end{equation}
\end{itemize}
\end{theorem}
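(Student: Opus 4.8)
The plan is to prove both assertions with a single composite Lyapunov functional, following the singular perturbation philosophy: treat the wave component as the slow (reduced) dynamics and the heat component as the fast (boundary-layer) dynamics, and combine a weighted energy for each. The first step is a quasi-static change of variables. Setting $\varepsilon=0$ in the heat equation of \eqref{system} forces $p$ to be the affine profile $p^\ast(x,t)=d\,u(1,t)\bigl(x+\tfrac1c\bigr)$, the unique solution of $p^\ast_{xx}=0$ with $p^\ast_x(0,t)=c\,p^\ast(0,t)$ and $p^\ast_x(1,t)=d\,u(1,t)$. I would then put $y:=p-p^\ast$. A direct computation shows that $(u,y)$ solves a wave equation carrying the reduced-order damping of \eqref{eq:reduced} together with a boundary coupling $b\,y(0,t)$, namely $u_x(1,t)=-a u_t(1,t)+\tfrac{bd}{c}u(1,t)+b\,y(0,t)$, while $y$ solves the boundary-layer problem \eqref{boundary_layer} (in the slow time) with $y_x(0,t)=c\,y(0,t)$, $y_x(1,t)=0$, forced by the term $-\varepsilon d\,u_t(1,t)(x+\tfrac1c)$. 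The purpose of this substitution is to move the genuinely singular part of the coupling into an $O(\varepsilon)$ forcing, as will be clear below.

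For the wave part I would use the weighted Riemann-invariant energy
\[
V_1(t)=\int_0^1\Bigl(e^{\mu x}\bigl(u_t+u_x\bigr)^2+e^{-\mu x}\bigl(u_t-u_x\bigr)^2\Bigr)\,dx,
\]
for which transport along the characteristics gives $\dot V_1=-\mu V_1+\mathcal B$, where, after using $u(0,t)=0$ to cancel the $x=0$ contributions, the boundary functional $\mathcal B$ is a quadratic form in $\bigl(u_t(1,t),u(1,t),y(0,t)\bigr)$. For the heat part I would take $V_2(t)=\tfrac12\int_0^1 y^2\,dx$, for which the shifted equation yields
\[
\dot V_2=\frac1\varepsilon\Bigl(-c\,y(0,t)^2-\int_0^1 y_x^2\,dx\Bigr)-d\,u_t(1,t)\int_0^1 y\,(x+\tfrac1c)\,dx.
\]
The decisive gain of the change of variables is visible here: the heat-to-wave coupling now enters through $u_t(1,t)$, which is dissipated by the $a$-damping of the wave, and it carries no factor $1/\varepsilon$, while the $1/\varepsilon$ survives only in front of the sign-definite dissipation $-c\,y(0)^2-\int y_x^2$.

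For assertion (i) I would set $V=V_1+\gamma V_2$. The form $\mathcal B$ is treated as follows: its $u_t(1)^2$ coefficient is negative, and after applying Young's inequality to the cross terms $u_t(1)u(1)$ and $u_t(1)y(0)$ the residual sign condition reduces exactly to the nonnegativity of $q(a,\mu)$ from Remark \ref{qmu}, i.e. $\mu\le\mu^\ast$; the $u(1)^2$ contribution is absorbed by the reduced damping under $\abs{\tfrac{bd}{c}}\le1$; and the leftover $y(0)^2$ term, whose coefficient is a multiple of $F(\mu)b^2$, is dominated by $-\tfrac{\gamma c}\varepsilon y(0)^2$ once $\varepsilon$ is small, precisely under $\tfrac{\pi^2}4+F(\mu)b^2\le c$. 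The remaining cross term $d\,u_t(1)\int y(x+\tfrac1c)$ is split by Young between the wave dissipation (budget of order $\mu e^{-\mu}$, controlled by $\abs{d}$) and $V_2$ (using the Poincaré inequality $\int y_x^2+c\,y(0)^2\ge\tfrac{\pi^2}4\int y^2$, whence the role of condition (iii)); the hypothesis $\tfrac{\abs{bd}}c\le\sqrt{\mu e^{-\mu}F(\mu)^{-1}}$ is exactly what lets the $b$- and $d$-couplings fit the same budget simultaneously. Choosing $\gamma$ and then $\varepsilon^\ast_1$ appropriately gives $\dot V\le-\tfrac\mu2 V$, and since $V$ is equivalent to $\norm{(u,u_t,y)}^2$, hence via $p=y+p^\ast$ to $\norm{(u,u_t,p)}^2$, this yields \eqref{l2-decay} with rate $\mu/4$ after taking square roots.

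For assertion (ii) I would upgrade the heat decay to $H^1$ by adding the second-order energy $V_3=\tfrac12\int_0^1 y_x^2\,dx$, with $\dot V_3=[y_x y_t]_0^1-\tfrac1\varepsilon\int_0^1 y_{xx}^2\,dx+d\,u_t(1)\int_0^1 y_{xx}(x+\tfrac1c)\,dx$; the strong dissipation $-\tfrac1\varepsilon\int y_{xx}^2$ controls $\int y_x^2$ and absorbs the boundary term (which, via $y_x(1)=0$ and $y_x(0)=c\,y(0)$, reduces to already-budgeted multiples of $y(0)^2$ and $u_t(1)^2$) as well as the new cross term. Since these extra terms consume more of the $b$- and $d$-budgets, the sharper hypothesis $3F(\mu)b^2\le c$ is needed; the composite $V_1+\gamma V_2+\gamma' V_3$ then satisfies $\dot V\le-\tfrac\mu2 V$ for $\varepsilon<\varepsilon^\ast_2$, and as $p_x=y_x+d\,u(1,t)$ with $u(1,t)$ decaying at the same rate, one recovers \eqref{h1-decay}. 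The main obstacle throughout is the bidirectional boundary coupling: without the quasi-static subtraction the heat-to-wave term would appear as $\tfrac1\varepsilon d\,u(1)p(1)$ and could not be dominated uniformly as $\varepsilon\to0$, whereas the substitution turns it into an $O(1)$ quantity in the dissipated variable $u_t(1)$. The real work is then the simultaneous Young-type balancing so that the single budget $\mu e^{-\mu}/F(\mu)$ absorbs both couplings, which is exactly what forces the joint smallness of $\abs{bd}/c$ and the threshold $\mu^\ast$.
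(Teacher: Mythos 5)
Your part (i) is essentially the paper's own proof: the same quasi-steady-state subtraction $y=p-(\tfrac dc+xd)u(1,t)$ leading to the shifted system \eqref{eq:full-system}, the same Riemann-invariant functional $V_1$, the same $L^2$ functional $\tfrac12\int_0^1 y^2\,dx$ for the heat component, and the same Young-inequality bookkeeping against the budgets $q(a,\mu)$, $\mu e^{-\mu}$ and $c$. The only structural difference is that the paper weights the heat energy by $\varepsilon$ (it uses $W=V_1+\varepsilon W_2$) while you use a fixed weight $\gamma$; both close for small $\varepsilon$, and your choice even has the incidental advantage that the composite functional is equivalent to the unweighted product norm uniformly in $\varepsilon$.

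Part (ii) contains a genuine gap. With $V_3=\tfrac12\int_0^1 y_x^2\,dx$ you obtain $\dot V_3=[y_xy_t]_0^1-\tfrac1\varepsilon\int_0^1 y_{xx}^2\,dx+d\,u_t(1,t)\int_0^1 y_{xx}(x+\tfrac1c)\,dx$, and you assert that the boundary term reduces ``to already-budgeted multiples of $y(0)^2$ and $u_t(1)^2$''. It does not: using $y_x(1,t)=0$ and $y_x(0,t)=c\,y(0,t)$ it equals $-c\,y(0,t)\,y_t(0,t)$, and substituting the equation at $x=0$ gives $-\tfrac c\varepsilon\,y(0,t)\,y_{xx}(0,t)+d\,y(0,t)\,u_t(1,t)$. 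The first summand carries both the factor $1/\varepsilon$ and the pointwise trace $y_{xx}(0,t)$, which is not controlled by $\norm{y_{xx}}_{L^2(0,1)}$ nor by anything else in your budget, so it cannot be absorbed by $-\tfrac1\varepsilon\int_0^1 y_{xx}^2\,dx$, and the estimate fails to close as $\varepsilon\to0$. The fix is precisely the paper's choice of functional: since $-c\,y(0,t)\,y_t(0,t)=-\tfrac c2\tfrac{d}{dt}\abs{y(0,t)}^2$, augment the energy to $V_2(y)=\tfrac12\int_0^1 y_x^2\,dx+\tfrac c2\abs{y(0,t)}^2$, which is \eqref{Vlyapunov_layer}; then the boundary contribution becomes part of an exact time derivative on the left-hand side, and one gets $\tfrac{d}{dt}\bigl(\varepsilon V_2(y)\bigr)=-\int_0^1 y_{xx}^2\,dx+\varepsilon\, d\, u_t(1,t)\int_0^1\bigl(x+\tfrac1c\bigr)y_{xx}\,dx$ up to sign conventions. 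After the trace bound $c^2\abs{y(0,t)}^2\le\int_0^1 y_{xx}^2\,dx$ and the Wirtinger-type inequality, your budget analysis under $3F(\mu)b^2\le c$ then goes through exactly as in the paper (which, like your computation, also needs a density/regularity argument to justify taking traces of $y_t$ in these manipulations).
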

\begin{remarkx}
Note that the condition $\frac{\pi^2}{4}+F(\mu)b^2\leq c$ of (i) in Theorem \ref{full_stab} is more restrictive than the fourth one in Theorem \ref{claim_stab}. On the other hand, $3F(\mu)b^2\leq c$, which is the condition $(ii)$ of Theorem \ref{full_stab}), together with $c\geq \frac{\pi^2}{8}$, implies condition (iv) in Theorem \ref{full_stab}. Moreover, the inequality $\frac{\abs{bd}}{c} \leq \sqrt{\mu e^{-\mu}(F(\mu))^{-1}}<1$ is fulfilled for any $\mu>0$.
\end{remarkx}
\begin{theorem}
\label{thikonov}
Let $a,b,c,d$ satisfy $(i)-(v)$ in Theorem \ref{claim_stab}. Suppose that $b,c,d\in \R$ satisfy
\begin{itemize}
    \item[(1)] $c\geq \frac{\pi^2}{4}$,
    \item[(2)] $\frac{\pi^2+2}{4} + F(\mu)b^2 \leq c$,
    \item[(3)] $\abs{d} \leq \sqrt{\mu e^{-\mu}}$.
\end{itemize}
There exists $\varepsilon^\star >0$ such that for any $\varepsilon\in (0,\varepsilon^\star)$, $u_0\in H^1(0,1)$, $u_1\in L^2(0,1)$, $ p_0\in L^2(0,1)$ $\bar u_0\in H^1(0,1)$, $\bar u_1\in L^2(0,1)$, $\bar p_0\in H^1(0,1)$ satisfying the compatibility conditions $u_0(0)=0,\bar u_0(0)=0$, together with the smallness conditions
\begin{equation}
\label{small-difference}
\norm{u_0-\bar u_0}_{H^1(0,1)} + \norm{u_1-\bar u_1}_{L^2(0,1)} + \norm{p_0-\bar p_0-d\tfrac{1+cx}{c}u_0(1)}_{L^2(0,1)} = O(\varepsilon^{3/2}),
\end{equation}
and
\begin{equation}
\label{small-sub}
 \norm{\bar p_0}_{H^1(0,1)} = O(\varepsilon) , \hspace{.3cm} \norm{\bar u_0}_{H^1(0,1)} + \norm{\bar u_1}_{L^2(0,1)} = O(\varepsilon^{3/2}),
\end{equation}
then the unique mild solution to \eqref{system} 
$$(u,u_t,p)\in C([0,\infty); H^1(0,1)\times L^2(0,1)\times L^2(0,1)),$$ 
satisfies
\begin{equation*}
\norm{u(t)-\bar u(t)}_{H^1(0,1)} + \norm{u_t(t)-\bar u_t(t)}_{L^2(0,1)} = e^{-\tfrac{\mu}{8}t}  O(\varepsilon^{3/2}), \hspace{.3cm} \forall t\geq 0,  
\end{equation*}
and
\begin{equation*}
\norm{p(t)-\bar p(\tfrac{t}{\varepsilon})- \tfrac{1+cx}{c} d\bar u_t(1,t)}_{L^2(0,1)}=e^{-\tfrac{\mu}{8}t}  O(\varepsilon), \hspace{.3cm} \forall t\geq 0,
\end{equation*}
where $\bar u\in C([0,\infty);H^1(0,1))\cap C^1([0,\infty);L^2(0,1))$ is the unique mild solution of system \eqref{eq:reduced} and $\bar p\in C([0,\infty);H^1(0,1))$ is the unique mild solution of system \eqref{boundary_layer}. 
\end{theorem}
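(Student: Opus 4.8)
The plan is to run the standard singular-perturbation (Tikhonov) scheme: decompose the full solution into the slow reduced part, the quasi-steady profile of the heat component, and the fast boundary-layer correction, and then estimate the remainder by an energy argument anchored on the uniform stability already granted by Theorem~\ref{full_stab}. Introduce the quasi-steady profile $\Pi(x,t):=\tfrac{1+cx}{c}\,d\,\bar u(1,t)$, i.e. the unique stationary heat profile satisfying $\Pi_{xx}=0$, $\Pi_x(0,t)=c\Pi(0,t)$ and $\Pi_x(1,t)=d\bar u(1,t)$; it is precisely the term that converts the coupling $b\,p(0,t)$ of \eqref{system} into $\tfrac{bd}{c}\bar u(1,t)$ in \eqref{eq:reduced}. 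Define the error variables $\tilde u:=u-\bar u$ and $w:=p-\Pi-\bar p(\cdot/\varepsilon)$. A direct computation, subtracting \eqref{eq:reduced} and using $\bar p_\tau=\bar p_{xx}$ from \eqref{boundary_layer}, shows that $(\tilde u,\tilde u_t,w)$ solves exactly the coupled operator of \eqref{system} --- with the homogeneous relations $w_x(0,t)=c\,w(0,t)$, $\tilde u(0,t)=0$ and the couplings $b\,w(0,t)$ in the wave boundary condition and $d\,\tilde u(1,t)$ in the heat flux at $x=1$ inherited unchanged --- now driven by two source terms: a boundary source $s_1(t):=b\,\bar p(0,t/\varepsilon)$ at $x=1$ (the residual trace of the boundary layer) and an interior heat source $s_2(x,t):=-\varepsilon\,\tfrac{1+cx}{c}\,d\,\bar u_t(1,t)=-\varepsilon\Pi_t$ (the residual produced because $\Pi$ is only quasi-steady). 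The compatibility conditions $u_0(0)=\bar u_0(0)=0$ guarantee $\tilde u(0,t)=0$, and \eqref{small-difference}--\eqref{small-sub} yield $\norm{\tilde u(0)}_{H^1}+\norm{\tilde u_t(0)}_{L^2}=O(\varepsilon^{3/2})$ and $\norm{w(0)}_{L^2}=O(\varepsilon^{3/2})$.

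Since the principal part of the error dynamics coincides with the generator analysed in Theorem~\ref{full_stab}, I would reuse the Lyapunov functional $V_\varepsilon$ built there, writing $\mathcal E(t):=V_\varepsilon(\tilde u(t),\tilde u_t(t),w(t))$, rather than re-derive the dissipation. Differentiating along the error system reproduces the homogeneous estimate and leaves only the source contributions,
\[
\dot{\mathcal E}(t)\ \le\ -\tfrac{\mu}{4}\,\mathcal E(t)\ +\ C\,\abs{s_1(t)}\,\abs{\tilde u_t(1,t)}\ +\ C\,\norm{s_2(\cdot,t)}_{L^2}\,\norm{w(t)}_{L^2},
\]
and, after absorbing $\abs{\tilde u_t(1,t)}^2$ and $\norm{w(t)}_{L^2}^2$ into the dissipation by Young's inequality, a comparison argument gives
\[
\mathcal E(t)\ \le\ e^{-\mu t/4}\,\mathcal E(0)\ +\ C\int_0^t e^{-\mu(t-s)/4}\big(\abs{s_1(s)}^2+\norm{s_2(\cdot,s)}_{L^2}^2\big)\,ds.
\]
Here it is essential that $V_\varepsilon$ is equivalent, uniformly in $\varepsilon\in(0,\varepsilon^\star)$, to the wave energy of $(\tilde u,\tilde u_t)$ plus the $\varepsilon$-\emph{weighted} heat energy of $w$, and that Theorem~\ref{full_stab} supplies this decay uniformly in $\varepsilon$.

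It then remains to estimate the two source integrals and to read off the stated orders from the $\varepsilon$-weighting of $V_\varepsilon$. For $s_1$, the rescaling $\tau=t/\varepsilon$, the exponential stability of \eqref{boundary_layer}, and the trace bound $\abs{\bar p(0,\tau)}\lesssim\norm{\bar p(\tau)}_{H^1}\lesssim\norm{\bar p_0}_{H^1}e^{-\gamma\tau}$ give $\int_0^\infty\abs{s_1}^2\,dt=\varepsilon\int_0^\infty\abs{b\,\bar p(0,\tau)}^2\,d\tau=O(\varepsilon)\cdot O(\varepsilon^2)=O(\varepsilon^3)$, where $\norm{\bar p_0}_{H^1}=O(\varepsilon)$ from \eqref{small-sub} is used; the factor $\varepsilon$ in $s_2$, together with $\norm{\bar u_0}_{H^1}+\norm{\bar u_1}_{L^2}=O(\varepsilon^{3/2})$ and the exponential decay of the reduced velocity trace $\bar u_t(1,\cdot)$, makes its contribution of strictly higher order. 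Since $\mathcal E(0)=O(\varepsilon^3)$ as well, the comparison inequality yields $\mathcal E(t)\le C e^{-\mu t/4}O(\varepsilon^3)$, the slightly slower target rate $\mu/8$ leaving room to absorb the reduced and boundary-layer decay entering the convolution. The wave energy sits in $V_\varepsilon$ without weight, so $\norm{\tilde u(t)}_{H^1}+\norm{\tilde u_t(t)}_{L^2}=e^{-\mu t/8}O(\varepsilon^{3/2})$; the heat energy enters with the weight $\varepsilon$, so $\varepsilon\norm{w(t)}_{L^2}^2\le\mathcal E(t)$ gives $\norm{w(t)}_{L^2}=e^{-\mu t/8}O(\varepsilon)$, which is exactly the claimed estimate for $p-\bar p(\cdot/\varepsilon)-\Pi$. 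This $\varepsilon$-weight is the mechanism producing the $\varepsilon^{1/2}$ gap between the two orders.

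The main obstacle is the two-time-scale bookkeeping of the boundary-layer source $s_1(t)=b\,\bar p(0,t/\varepsilon)$: it is not pointwise small on the initial layer $t=O(\varepsilon)$, and the whole argument rests on converting it into the time-integrated smallness $\int_0^\infty\abs{s_1}^2\,dt=O(\varepsilon^3)$. This step forces the rescaling $\tau=t/\varepsilon$, the sharp exponential decay of \eqref{boundary_layer}, and a pointwise trace estimate controlled by the $H^1$-norm --- which is why \eqref{small-sub} requires $\norm{\bar p_0}_{H^1}=O(\varepsilon)$ rather than only an $L^2$ bound. A second, pervasive difficulty is keeping every constant uniform in $\varepsilon$, which is precisely the content borrowed from Theorem~\ref{full_stab} (and the reason it, rather than Theorem~\ref{claim_stab}, must be invoked): the dissipation must dominate the $O(1/\varepsilon)$ terms of the heat dynamics and the couplings $b\,w(0)$, $d\,\tilde u(1)$ simultaneously. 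Finally, the boundary traces $\tilde u_t(1,\cdot)$, $w(0,\cdot)$, $\bar u_t(1,\cdot)$ and the profile $\Pi_t$ entering the estimates must be justified; this is carried out for regular subsystem data (ensured by the compatibility conditions and $\bar p_0,\bar u_0\in H^1$, $\bar u_1\in L^2$) and extended to general data by density.
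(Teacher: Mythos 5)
Your proposal is correct and follows essentially the same route as the paper: the same error variables $\alpha=u-\bar u$ and $\beta=p-\tfrac{1+cx}{c}d\,\bar u(1,t)-\bar p(\cdot,t/\varepsilon)$, the same Lyapunov functional $V_1(\alpha,\alpha_t)+\varepsilon W_2(\beta)$, and the same two key ingredients (the $H^1$ trace decay of the boundary layer, Corollary \ref{bl_coro}, and the integrated trace estimate for the reduced system, Corollary \ref{r_coro}) feeding a Gronwall comparison, with the final orders read off exactly as you describe from the $\varepsilon$-weighting. If anything, your time-integrated treatment of the source $b\,\bar p(0,t/\varepsilon)$ --- retaining the factor $\varepsilon$ produced by the rescaling $\tau=t/\varepsilon$, so that its total contribution is $O(\varepsilon)\norm{\bar p_0}_{H^1(0,1)}^2=O(\varepsilon^3)$ --- is the sharper bookkeeping actually needed for the claimed $O(\varepsilon^3)$ bound on the Lyapunov functional, whereas the paper's pointwise bound $e^{-\pi^2 t/(4\varepsilon)}\le e^{-\mu t/4}$ applied before integrating is cruder at precisely that step.
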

\begin{remarkx}
Conditions $(1),(2),(3)$ in Theorem \ref{thikonov} and the regularity $\bar p_0\in H^1(0,1)$ imply conditions $(iii)-(v)$ in Theorem \ref{claim_stab} together with the regularity $\bar p_0\in L^2(0,1)$. The regularity of the mild solution to \eqref{boundary_layer} when $\bar p_0 \in H^1(0,1)$ is explained in Section \ref{BoundaryStab}.
\end{remarkx}
The paper is organized as follows. In Section \ref{well-posed} we prove the well-posedness of system \eqref{system} through semigroup theory, while in Section \ref{stab_cond} we show that system \eqref{system} is exponentially stable for any $\varepsilon>0$, by using an energy approach. In Section \ref{ReducedStab} we study the stability of the reduced order system \eqref{eq:reduced} through a suitable Lyapunov functional. In Section \ref{BoundaryStab} we study the stability of the boundary-layer system \eqref{boundary_layer}, for which we characterize the exponential decay in both $L^2(0,1)$ and $H^1(0,1)$ norms, by using two different Lyapunov functionals. In Section \ref{FullStab} we proof Theorem \ref{full_stab}, which shows that the full system is exponentially stable in the $H^1(0,1)\times L^2(0,1)\times L^2(0,1)$ and $H^1(0,1)\times L^2(0,1)\times H^1(0,1)$ spaces, for $\varepsilon>0$ small enough. In Section \ref{Thiko} we proof the Tikhonov approximation result given in Theorem \ref{thikonov}, which justifies the application of the SPM to get this exponential stability result.
\section{Well-posedness and stability for any $\varepsilon$}
\label{analysis}
\subsection{Well-posedness}
\label{well-posed} 
In this section we study the well-posedness of \eqref{system} through semigroup theory. To this end, let us define $H= H^1(0,1)\times L^2(0,1)$ equipped with the following inner product
\begin{equation}
\left((u,v),(\tilde u,\tilde v)\right)_{H} = 2\int_0^1 e^{\mu x}(u_x+v)(\tilde u_x +\tilde v) + e^{-\mu x}(u_x-v)(\tilde u_x-\tilde v)dx,
\end{equation}
which turns out to be a Hilbert space. Therefore, the space $\mathcal{H} =  H \times L^2(0,1)$ is a Hilbert space with the following induced inner product
\begin{equation*}
\left((u,v,p),(\tilde u,\tilde v,\tilde p)\right)_{\mathcal{H}} = \left((u,v),(\tilde u,\tilde v)\right)_H + \varepsilon \int_0^1  p \tilde pdx,
\end{equation*}
which is equivalent to the usual product norm in $\mathcal{H}$. We define the spatial operator associated with \eqref{system}
\begin{equation}
\label{defop}
A_\varepsilon(u,v,p)=\left(v,u_{xx},\varepsilon^{-1} p_{xx}\right),
\end{equation}
with domain
\begin{multline}
\label{domain}
D(A_\varepsilon)=\Big\{(u,v,p)\in \mathcal{H} \text{ : } A_\varepsilon(u,v,p)\in \mathcal{H}, u(0)=v(0)=0,  u_{x}(1)=-av(1)+bp(0), \\ p_{x}(0)=cp(0), p_x(1)=du(1)\Big\}.
\end{multline}
Moreover, we define the space $H^1_L(0,1) =\{ u \in H^1(0,1) \text{ : } u(0)=0\}$, which is equipped with the following inner product  
$$(u,v)_{H^1_L(0,1)}= (u_x,v_x)_{L^2(0,1)},$$ 
The space $H^1_L(0,1)$ turns out to be a Hilbert space with this inner product, thanks to Poincaré's inequality.

\begin{remarkx}
\label{char}
Since $A_\varepsilon$ is linear, we have for each $\varepsilon>0$ the following characterization for its domain:
\begin{equation*}
\begin{aligned}
    D(A_\varepsilon)=\Big\{(u,v,p)\in H^2(0,1)\cap H^1_L(0,1) \times H^1_L(0,1) \times H^2(0,1) & \text{ : } u_x(1)=-av(1)+bp(0), \\& \hspace{.3cm} p_x(0)=cp(0), \hspace{.3cm} p_x(1)=du(1)\Big\}.
\end{aligned}
\end{equation*}
\end{remarkx}
With the aid of the elements introduced so far we have that \eqref{system} is equivalent to
\begin{equation}
\label{sgs}
\pvi{ \dot Y(t)  = A_\varepsilon Y(t), \\ Y(0)=Y_0,}
\end{equation}
where $Y(t)=(u(t,\cdot),v(t,\cdot),p(t,\cdot))$ and $Y_0=(u_0,u_1, p_0) \in \mathcal{H}$. The following result is the main tool to show the well-posedness of the Cauchy problem \eqref{sgs}.
\begin{lemma}
\label{maximal_monotone}
Let $a\in[\eta^{-1},\eta]$, $a\not=1$, $\mu\in (0,\mu^\ast(a)]$ where $\mu^\ast(a),\eta$ are given in Lemma \ref{claim_stab}.
Then for each $\varepsilon>0$ the operator defined in \eqref{defop} is a maximal dissipative operator whenever $c\geq \tfrac{\pi^2}{8}$, $\abs{d} \leq 2\sqrt{\mu e^{-\mu}}$, $\abs{\tfrac{bd}{c}}<1$ and  $1+2(\sinh{\mu}+2\cosh{\mu})b^2 \leq c$.
\end{lemma}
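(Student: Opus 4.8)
The plan is to invoke the Lumer--Phillips theorem. Since $D(A_\varepsilon)$ is dense in $\mathcal H$ (it contains the smooth triples satisfying the four boundary relations, which are dense), it suffices to establish two things: that $A_\varepsilon$ is dissipative, i.e. $(A_\varepsilon Y,Y)_{\mathcal H}\le 0$ for every $Y=(u,v,p)\in D(A_\varepsilon)$, and that $\id-A_\varepsilon$ is surjective (the range condition for some, hence every, $\lambda>0$).

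For dissipativity I would pass to the Riemann invariants $w=u_x+v$ and $z=u_x-v$, which diagonalise the weighted wave inner product so that the first two slots contribute $\int_0^1 e^{\mu x}(w^2)_x-e^{-\mu x}(z^2)_x\,dx$ and the last contributes $\int_0^1 p\,p_{xx}\,dx$. Integrating by parts and inserting the domain conditions $v(0)=0$, $u_x(1)=-av(1)+bp(0)$, $p_x(0)=cp(0)$, $p_x(1)=du(1)$ kills the boundary term at $x=0$ (since $w(0)=z(0)=u_x(0)$) and collapses the interior into the three manifestly negative quantities $-\mu\int_0^1(e^{\mu x}w^2+e^{-\mu x}z^2)\,dx$, $-\int_0^1 p_x^2\,dx$ and $-c\,p(0)^2$, leaving the boundary quadratic form
\[
\bigl(e^{\mu}(1-a)^2-e^{-\mu}(1+a)^2\bigr)v(1)^2+2b\bigl(e^{\mu}(1-a)+e^{-\mu}(1+a)\bigr)v(1)p(0)+2\sinh(\mu)\,b^2\,p(0)^2+d\,u(1)p(1).
\]
The crux is to dominate this form by the three negative quantities. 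Here Remark \ref{qmu} is decisive: $q(a,\mu)\ge0$ is exactly $e^{-\mu}(1+a)^2\ge 3e^{\mu}(1-a)^2$, so the coefficient of $v(1)^2$ is at most $-2e^{\mu}(1-a)^2$, leaving enough negativity to absorb the cross term $v(1)p(0)$ by Young's inequality; the residual $p(0)^2$ contributions are then swallowed by $-c\,p(0)^2$, which is where the condition $1+2(\sinh\mu+2\cosh\mu)b^2\le c$ is consumed. For the coupling $d\,u(1)p(1)$ I would split it by Young's inequality and estimate the two traces separately: since $u(0)=0$, $u(1)^2\le\int_0^1 u_x^2\,dx\le\tfrac12\int_0^1(w^2+z^2)\,dx\le\tfrac{e^{\mu}}{2}\int_0^1(e^{\mu x}w^2+e^{-\mu x}z^2)\,dx$, so a factor $d^2$ of size $4\mu e^{-\mu}$ (condition $\abs{d}\le2\sqrt{\mu e^{-\mu}}$) is covered by $-\mu\int_0^1(\cdots)$, while $p(1)^2$ is controlled through the elementary trace inequality $p(1)^2\le\tfrac{1+c}{c}\bigl(c\,p(0)^2+\int_0^1 p_x^2\,dx\bigr)$, whose minimiser is the profile $\tfrac{1+cx}{c}$ and for which $c\ge\pi^2/8$ keeps the accumulated constants small enough to close the estimate.

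For maximality I would fix $\lambda=1$ and solve $(\id-A_\varepsilon)(u,v,p)=(f,g,h)$ for arbitrary $(f,g,h)\in\mathcal H$. Eliminating $v=u-f$ decouples the problem into two Helmholtz equations, $-u_{xx}+u=f+g$ with $u(0)=0$ and the Robin condition $u_x(1)+a\,u(1)=af(1)+bp(0)$, and $-p_{xx}+\varepsilon p=\varepsilon h$ with $p_x(0)=cp(0)$ and $p_x(1)=du(1)$, linked only through the two scalars $p(0)$ and $u(1)$. Each is uniquely solvable (the Robin coefficient $a>0$ and $\lambda=1$ make them coercive), so $u(1)$ is an affine function of $p(0)$ and, in turn, $p(0)$ is an affine function of $u(1)$; substituting one into the other reduces everything to a single scalar equation $(1-\sigma)p(0)=\text{const}$. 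An explicit computation with $u^{(1)}=A\sinh x$ and the $\cosh(\sqrt\varepsilon x),\sinh(\sqrt\varepsilon x)$ basis for $p$ gives $|\sigma|\le\frac{\abs{bd}}{c}\cdot\frac{\sinh 1}{\cosh 1+a\sinh 1}<\frac{\abs{bd}}{c}<1$ by hypothesis, so $1-\sigma\neq0$, the scalar equation is solvable, and one recovers $(u,v,p)\in D(A_\varepsilon)$. This is precisely where $\abs{\tfrac{bd}{c}}<1$ is used. Lumer--Phillips then yields that $A_\varepsilon$ is maximal dissipative.

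I expect the \emph{main obstacle} to be the simultaneous calibration of the Young's inequalities in the dissipativity step: all the parameter restrictions are spent there, and the delicate point is that the single dissipative integral $-\mu\int_0^1(e^{\mu x}w^2+e^{-\mu x}z^2)\,dx$ must carry the whole $u(1)^2$ budget (via $\abs{d}\le2\sqrt{\mu e^{-\mu}}$) while the $v(1)^2$ budget is supplied separately by the sign of the $v(1)^2$ coefficient (via $q(a,\mu)\ge0$), without double counting. Matching the exponential weights $e^{\pm\mu}$ appearing on both sides is exactly what forces the thresholds in conditions (iii)--(v), so the bookkeeping, rather than any single hard inequality, is the real work.
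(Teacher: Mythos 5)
Your dissipativity step follows the paper's proof in structure (expand the weighted inner product, insert the boundary conditions, Young on the cross terms, trace inequalities for $u(1)$ and $p(1)$), but two of your calibrations do not close under the stated hypotheses. First, the sequencing around $q(a,\mu)$: you propose to \emph{first} collapse the $v(1)^2$ coefficient to $-2e^{\mu}(1-a)^2$ and \emph{then} absorb the cross term $2b\bigl(e^{\mu}(1-a)+e^{-\mu}(1+a)\bigr)v(1)p(0)$ against it. This fails for $a$ near $1$: there $(1-a)^2\to 0$ while the cross term still carries $e^{-\mu}(1+a)\approx 2e^{-\mu}$, so Young's inequality forces a $p(0)^2$ coefficient of order $b^2e^{-2\mu}/\bigl(e^{\mu}(1-a)^2\bigr)$, unbounded in $a$, which cannot be swallowed by $-c\,p(0)^2$ with the stated $c$. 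The correct (paper's) order is Young \emph{first}, with matched weights, $2e^{\mu}(1-a)v(1)bp(0)\le\tfrac12 e^{\mu}(1-a)^2v(1)^2+2e^{\mu}b^2p(0)^2$ and analogously for the $e^{-\mu}(1+a)$ term, so the total $v(1)^2$ coefficient is exactly $-q(a,\mu)\le 0$ and the $p(0)^2$ surcharge is $4\cosh(\mu)b^2$, uniformly in $a$. Second, your trace inequality $p(1)^2\le\tfrac{1+c}{c}\bigl(c\,p(0)^2+\int_0^1 p_x^2\,dx\bigr)$ spends $\tfrac{1+c}{2}$ on $p(0)^2$ (after $|d|\le 2\sqrt{\mu e^{-\mu}}$), which needs $1+4(\sinh\mu+2\cosh\mu)b^2\le c$, strictly stronger than the hypothesis. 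The symmetric split $p(1)^2\le 2p(0)^2+2\int_0^1 p_x^2\,dx$ (Lemma \ref{a1}) is the optimal weight and closes exactly under $1+2(\sinh\mu+2\cosh\mu)b^2\le c$; this, together with $\delta=4\mu e^{-\mu}/d^2$ and Lemma \ref{trace_reduced}, is precisely the paper's computation. Both issues are fixable without new ideas, but as written the estimate does not prove the lemma as stated.

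Your maximality argument, by contrast, is correct and genuinely different from the paper's. The paper shows $0\in\rho(A_\varepsilon)$: it decouples $A_\varepsilon(u,v,p)=(u_0,v_0,p_0)$ via the substitution $\tilde p(x)=p(x)-\varepsilon(\tfrac{d}{2c}x^2+\tfrac d6 x^3)u_0(1)-(dx+\tfrac dc)u(1)$ into a cascade system, solves it by Lax--Milgram (coercivity of $C$ consumes $|\tfrac{bd}{c}|<1$, coercivity of $B_\varepsilon$ consumes $c\ge\tfrac{\pi^2}{8}$), and then invokes openness of the resolvent set to get the range condition at some $\lambda>0$. You instead solve $(\id-A_\varepsilon)Y=F$ directly at $\lambda=1$, reducing to two scalar Helmholtz problems coupled only through $u(1)$ and $p(0)$, and close with the contraction $|\sigma|=|bd\,U(1)P(0)|$ where $U(1)=\sinh 1/(\cosh 1+a\sinh 1)<\tanh 1$ and $P(0)=(\sqrt\varepsilon\sinh\sqrt\varepsilon+c\cosh\sqrt\varepsilon)^{-1}\le 1/c$, so $|\sigma|<|\tfrac{bd}{c}|<1$. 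This buys something: it works at the value of $\lambda$ that Lumer--Phillips actually requires (no detour through $0\in\rho(A_\varepsilon)$ and resolvent openness, which tacitly uses closedness of $A_\varepsilon$), and it consumes only $a>0$ and $|\tfrac{bd}{c}|<1$, not $c\ge\tfrac{\pi^2}{8}$. Its cost is reliance on explicit constant-coefficient solutions, so it is less robust than the variational route if the model were perturbed.
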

\begin{proof}
First we will prove that $A_\varepsilon$ is a dissipative operator. For this purpose, let $(u,v,p)\in D(A_\varepsilon)$. Integration by parts yields
\begin{multline*}
\left(A_\varepsilon(u,v,p), (u,v,p)\right)_{\mathcal{H}} = -\int_0^1  \abs{p_x}^2 dx + p_x(1)p(1)-p_x(0)p(0) -\mu \int_0^1 e^{\mu x} \abs{u_x+v}^2+e^{-\mu x}\abs{u_x-v}^2dx,\\+e^{\mu}\abs{u_x(1)+v(1)}^2-e^{-\mu}\abs{u_x(1)-v(1)}^2-\abs{u_x(0)+v(0)}^2-\abs{u_x(0)-v(0)}^2,
\end{multline*}
using boundary conditions we have
\begin{multline}
\label{in}
\left(A_\varepsilon(u,v,p),(u,v,p)\right)_{\mathcal{H}} 
=  -\int_0^1  \abs{p_x}^2 dx + du(1)p(1)-c\abs{p(0)}^2  -\mu\int_0^1 e^{\mu x} \abs{u_x+v}^2+e^{-\mu x}\abs{u_x-v}^2dx \\ +2\sinh{(\mu)}b^2\abs{p(0)}^2+\left(e^{\mu}\abs{1-a}^2-e^{-\mu}\abs{1+a}\right)\abs{v(1)}^2  +2e^{\mu}(1-a)v(1)bp(0)+2e^{-\mu}(1+a)v(1)bp(0).
\end{multline}
Applying Young's and Poincaré's inequalities to \eqref{in} we get
\begin{multline*}
\left(A_\varepsilon(u,v,p),(u,v,p)\right)_{\mathcal{H}}\leq  -\int_0^1  \abs{p_x}^2 dx + \frac{\delta }{2}d^2\abs{u(1)}^2+\frac{1}{2\delta}\abs{p(1)}^2-c\abs{p(0)}^2 , \\-\mu\int_0^1 e^{\mu x} \abs{u_x+v}^2+e^{-\mu x}\abs{u_x-v}^2dx +\left(e^{\mu}\abs{1-a}^2-e^{-\mu}\abs{1+a}\right)\abs{v(1)}^2 ,\\+2\sinh{(\mu)}b^2\abs{p(0)}^2 + \frac{1}{2}e^{\mu}(1-a)^2\abs{v(1)}^2+ \frac{1}{2}e^{-\mu}(1+a)^2\abs{v(1)}^2+4 \cosh{(\mu)}b^2 \abs{p(0)}^2.
\end{multline*}
Equivalently, we have
\begin{multline}
\label{delta_ineq}
\left(A_\varepsilon(u,v,p),(u,v,p)\right)_{\mathcal{H}} \leq  -\int_0^1  \abs{p_x}^2 dx + \frac{\delta }{2}d^2\abs{u(1)}^2+\frac{1}{2\delta}\abs{p(1)}^2 -\mu\int_0^1 e^{\mu x} \abs{u_x+v}^2+e^{-\mu x}\abs{u_x-v}^2dx,\\ +(2(\sinh{(\mu)}+2\cosh{(\mu)})b^2-c)\abs{p(0)}^2 -q(a,\mu)\abs{v(1)}^2,
\end{multline}
where $a,\mu$ and $q(a,\mu)$ are choosen as in Theorem \ref{claim_stab}. Now, taking $\delta = \frac{4\mu e^{-\mu}}{d^2}$ in \eqref{delta_ineq}, and using Lemma \eqref{trace_reduced} yields the following inequality
\begin{equation*}
\left(A_\varepsilon(u,v,p),(u,v,p)\right)_{\mathcal{H}} 
\leq  -\int_0^1  \abs{p_x}^2 dx +(2(\sinh{(\mu)}+2\cosh{(\mu)})b^2-c)\abs{p(0)}^2+\frac{d^2e^{\mu}}{8\mu}\abs{p(1)}^2.
\end{equation*}
Using Lemma \ref{a1} we obtain
\begin{equation}
\label{dissipative}
\begin{aligned} \left(A_\varepsilon(u,v,p),(u,v,p)\right)_{\mathcal{H}} &\leq \left(\frac{d^2e^\mu}{4\mu}-1\right)\int_0^1\abs{p_x}^2 dx+ \left( b^2(2\sinh{\mu}+4\cosh{\mu})-c+\frac{d^2e^\mu}{4\mu}\right)\abs{p(0)}^2.
\end{aligned}
\end{equation}
Finally, we deduce from \eqref{dissipative} that $A_\varepsilon$ is a dissipative operator for every $\varepsilon>0$, as soon as
\begin{equation}
\label{c1}
\abs{d} \leq 2 \sqrt{\mu e^{-\mu}}, \hspace{.3cm} 1+b^2(2\sinh{\mu}+4\cosh{\mu})-c\leq 0,
\end{equation}
is satisfied.

On the other hand, since $\rho(A_\varepsilon)$ is open (see, for instance, \cite[Remark 2.2.8]{tucsnak2009observation}), we have that $0\in\rho(A_\varepsilon)$ implies that $\lambda I + A_\varepsilon$ is onto for some $\lambda>0$. Hence, the rank condition given in \cite[Chapter 7]{brezis2010functional} is fulfilled, and the operator $A_\varepsilon$ is a maximal operator. The condition $0\in \rho(A_\varepsilon)$ is equivalent to show that for each $(u_0,v_0,p_0)\in \mathcal{H}$ the equation $A_\varepsilon(u,v,p)=(u_0,v_0,p_0)$ has a solution $(u,v,p)\in D(A_\varepsilon)$, i.e., we have to solve the boundary value problem
\begin{equation}
\label{notmaximality}
\pvi{ u_{xx}= v_0,\\ \varepsilon^{-1} p_{xx} = p_0, \\ u(0)=0, \hspace{.3cm} u_x(1)=-au_0(1)+bp(0), \\ p_x(0)=c p(0), \hspace{.3cm} p_x(1)=du(1).}
\end{equation}
Under the invertible transformation $\tilde p (x) = p(x)-\varepsilon\left(\frac{d}{2c}x^2+\frac{d}{6}x^3\right)u_0(1)-\left(d x+\frac{d}{c}\right)u(1),$ system \eqref{notmaximality} becomes
\begin{equation}
\label{maximality}
\pvi{ u_{xx}= v_0,\\ \varepsilon^{-1} \tilde p_{xx} = p_0-(\tfrac{d}{c}+dx)u_0(1),\\ u(0)=0, \hspace{.3cm} u_x(1)=-au_0(1)+\tfrac{bd}{c} u(1) + b\tilde p(0), \\ \tilde p_x(0)=c \tilde p(0), \hspace{.3cm} \tilde p_x(1)=-\varepsilon(\tfrac{d}{c}+\tfrac{d}{2})u_0(1).}
\end{equation}
This system is a coupled system in cascade form, and therefore is easier to solve. We will use a variational approach to study \eqref{maximality}. Let us multiply the first line and second line of \eqref{maximality} by $\psi\in H^1(0,1)$ and $\phi\in  H^1_L(0,1)$, respectively. Integrating over $(0,1)$, and integrating by part yields
\begin{equation}
\label{variational1}
\varepsilon^{-1}\int_0^1 \tilde p_x \psi_x dx +\varepsilon^{-1}c\tilde p(0) \psi(0)  = \int_0^1 \big((\tfrac{d}{c}+dx)u_0(1)-p_0 \big)\psi dx - \varepsilon^{-1}\tfrac{d(c+2)}{2c}u_0(1)\psi(1), \hspace{.3cm}\forall \psi \in H^1(0,1),
\end{equation}
and
\begin{equation}
\label{variational2}
\int_0^1 u_x\phi_x dx - \frac{bd}{c}u(1)\phi(1) = -\int_0^1 v_0\phi dx + \big(b\tilde p(0)-au_0(1)\big) \phi(1), \hspace{.3cm} \forall \phi\in H^1_L(0,1).
\end{equation}
The right-hand side of both \eqref{variational1} and \eqref{variational2} are continuous with respect its respective arguments $\psi\in H^1(0,1)$ and $\phi\in H^1_L(0,1)$.
The variational problems \eqref{variational1} and \eqref{variational2} have the operators $B_\varepsilon:H^1(0,1)\times H^1(0,1) \to \R$ and $ C:H^1_L(0,1)\times H^1_L(0,1) \to \R$ as bilinear forms. These bilinear forms are defined by
$$  B_\varepsilon(\varphi,\psi)=\varepsilon^{-1} \int_0^1\varphi_x\psi_x dx + \varepsilon^{-1}c\varphi(0)\psi(0), \hspace{.5cm}C(\varphi,\psi) = \int_0^1\varphi_x\psi_x dx  - \tfrac{bd}{c} u(1)\phi(1).$$
It is easy to prove that $B_\varepsilon$ and $C$ are continuous and coercive operators whenever $\abs{\tfrac{bd}{c}}\leq 1$. In fact, let us note that
$$ B_\varepsilon(\psi,\psi) = \varepsilon^{-1}\int_0^1 \abs{\psi_x}^2 + \varepsilon^{-1}c \abs{\psi(0)}^2, \hspace{.3cm} \forall \psi\in H^1(0,1),$$
and
$$C(\varphi,\varphi) = \int_0^1 \abs{\varphi_x}^2 - \frac{bd}{c} \abs{\varphi(1)}^2, \hspace{.3cm} \forall \varphi\in H^1_L(0,1).$$
Applying Lemma \ref{trace_poincare} to $\varphi$, and the fact that $\norm{\varphi}_{H^1(0,1)} \leq \kappa \norm{\varphi}_{L^2(0,1)}$, for some constat $\kappa >0$, it follows directly that $C$ is coercive whenever $\abs{\tfrac{bd}{c}}<1$. On the other hand, using Poincaré's inequality (see for instance \cite[Appendix A]{krstic2008boundary}) for $\psi$ we have
\begin{equation*}
\begin{aligned}
B_\varepsilon(\psi,\psi) &= \varepsilon^{-1} \left( (c-\tfrac{\pi^2}{8})\abs{\psi(0)}^2+\frac{\pi^2}{8}\abs{\psi(0)}^2 + \frac12\int_0^1 \abs{\psi_x}^2dx+ \frac12\int_0^1 \abs{\psi_x}^2dx\right)\geq \frac{\varepsilon^{-1}}{2} \norm{\psi}_{H^1(0,1)},
\end{aligned}
\end{equation*}
whenever $c\geq\frac{\pi^2}{8}$. Therefore, \cite[Corollary 5.8]{brezis2010functional} applies, and both \eqref{variational1} and \eqref{variational2} have one solution $\tilde p \in H^1(0,1)$ and $ u\in H^1_L(0,1) $. It only remains to show that $\tilde p, u \in H^2(0,1)$ and that the boundary conditions are satisfied. To this end, it is enough to take $\varphi,\psi \in C^\infty(0,1)$ and integrate by parts. Then we choose suitable conditions for $\phi,\psi$, e.g., $\varphi(1)=0$, $\psi(0)=0$, and so on.
\begin{proposition}
Let $a,b,c,d$ satisfying the hypothesis of Proposition \ref{maximal_monotone} and $\varepsilon>0$, then we have
\begin{itemize}
\item[(i)] Let $(u_0,u_1,p_0)\in D(A_\varepsilon)$. Then, system \eqref{system} has a unique strong solution
\begin{equation*}
(u,u_t,p)\in C([0,\infty);D(A_\varepsilon))\cap C^1([0,\infty);\mathcal{H}).
\end{equation*}
\item[(ii)] Let $(u_0,u_1,p_0)\in \mathcal{H}$. Then, system \eqref{system} has a unique mild solution $(u,u_t,p)\in C([0,\infty);\mathcal{H})$.
\end{itemize}
\end{proposition}
\begin{proof}
This result is a consequence of using semigroup theory developed in \cite[Chapter 3]{brezis2010functional} together with Lemma \ref{maximal_monotone}
\end{proof}
\begin{remarkx}
Let $a,b,c,d$ satisfying the hypothesis of Proposition \ref{maximal_monotone} and $\varepsilon >0$. Recalling the definitions of $D(A_\varepsilon)$ and $\mathcal{H}$, it holds
\begin{itemize}
    \item[(i)] For $(u_0,u_1,p_0)\in D(A_\varepsilon)$, the unique strong solution $(u,p)$ of \eqref{system} satisfies $$u\in C([0,\infty);H^2(0,1)\cap H^1(0,1))\cap C^1([0,\infty);H^1(0,1))\cap C^2([0,\infty); L^2(0,1))$$ and $p\in C([0,\infty);H^2(0,1))\cap C^1([0,\infty);L^2(0,1))$.
    \item[(ii)] For $(u_0,u_1,p_0)\in H^1(0,1)\times L^2(0,1)\times L^2(0,1)$ the unique mild solution $(u,p)$ of \eqref{system} satisfies $u\in C([0,\infty); H^1(0,1))\cap C^1([0,\infty);L^2(0,1))$ and $p\in C([0,\infty);L^2(0,1))$.
\end{itemize}
\end{remarkx}
\end{proof}
\subsection{Stability conditions}
\label{stab_cond}
In this section we show that the state $(u(\cdot,t),u_t(\cdot,t),p(\cdot,t))$ of \eqref{system} is exponentially stable in the $H^1(0,1)\times L^2(0,1) \times L^2(0,1)-$norm.

\begin{proposition}
Let $a,b,c,d$ satisfying the hypothesis of Proposition \ref{maximal_monotone} and $\varepsilon>0$. Therefore,
\begin{multline}
\label{dif_ineq}
\dot E(u,u_t,p) \leq - \frac{\mu}{2} \norm{(u,u_t)}_H  - \frac{\pi^2}{8}\norm{p}^2_{L^2(0,1)} + \left(\frac{d^2e^{\mu}}{2\mu} -\frac{1}{2}\right)\int_0^1 \abs{p_x}^2dx ,\\ +\left(\frac{\pi^2}{8}-\frac{c}{2}\right)\abs{p(0,t)}^2  + \left( \frac{d^2e^{\mu}}{2\mu}+(2\sinh{\mu} + 4\cosh{\mu})b^2-\frac{c}{2}\right)\abs{p(0,t)}^2,
\end{multline}
\end{proposition}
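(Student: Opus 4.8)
The plan is to take as Lyapunov functional the natural squared norm $E(u,u_t,p)=\tfrac12\norm{(u,u_t,p)}_{\mathcal H}^2$ and to exploit the fact that along a strong solution $Y=(u,u_t,p)$ of the abstract problem \eqref{sgs} one has $\dot E=(A_\varepsilon Y,Y)_{\mathcal H}$. This identity reduces the whole statement to an estimate for the quadratic form $(A_\varepsilon Y,Y)_{\mathcal H}$, which is exactly the object already expanded by integration by parts in the proof of Lemma \ref{maximal_monotone}. So I would not start from scratch but from the identity \eqref{in}, which isolates the two good dissipative contributions — the weighted volume term $-\mu\int_0^1 e^{\mu x}\abs{u_x+u_t}^2+e^{-\mu x}\abs{u_x-u_t}^2\,dx=-\tfrac{\mu}{2}\norm{(u,u_t)}_H^2$ and the heat term $-\int_0^1\abs{p_x}^2\,dx$ — from the indefinite boundary couplings $d\,u(1)p(1)$ and the two cross products $2e^{\pm\mu}(1\mp a)u_t(1)\,b\,p(0)$.

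Next I would dominate the indefinite couplings by Young's inequality, keeping the parameter $\delta$ in $d\,u(1)p(1)\le\tfrac\delta2 d^2\abs{u(1)}^2+\tfrac{1}{2\delta}\abs{p(1)}^2$ free for the moment, and splitting the two $u_t(1)p(0)$ products exactly as in the passage to \eqref{delta_ineq}. This collects the boundary coefficient of $\abs{u_t(1)}^2$ into $-q(a,\mu)\abs{u_t(1)}^2$, which is $\le 0$ under hypotheses $(i)$ and $(iv)$ by Remark \ref{qmu} and can simply be discarded, and it produces the coefficient $(2\sinh\mu+4\cosh\mu)b^2$ on $\abs{p(0)}^2$. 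The trace contribution $\tfrac\delta2 d^2\abs{u(1)}^2$ is then folded back into the weighted volume term through Lemma \ref{trace_reduced}, while $\tfrac1{2\delta}\abs{p(1)}^2$ is converted by Lemma \ref{a1} into multiples of $\int_0^1\abs{p_x}^2\,dx$ and $\abs{p(0)}^2$.

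The one genuinely delicate point is the bookkeeping of the two negative reservoirs. Unlike in Lemma \ref{maximal_monotone}, where the volume dissipation was spent entirely to absorb $\abs{u(1)}^2$, here I must reserve a controlled fraction of it so that a clean dissipative remainder $-\tfrac{\mu}{2}\norm{(u,u_t)}_H^2$ survives; this forces the specific calibration $\delta=\tfrac{2\mu e^{-\mu}}{d^2}$, which is also exactly what turns $\tfrac1{2\delta}\abs{p(1)}^2$, via Lemma \ref{a1}, into the coefficient $\tfrac{d^2 e^{\mu}}{2\mu}$ on both $\int_0^1\abs{p_x}^2\,dx$ and $\abs{p(0)}^2$. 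Symmetrically, I would spend only half of the heat dissipation $-\int_0^1\abs{p_x}^2\,dx$, invoking the one-sided Poincaré inequality for $p$ (the same one used in the coercivity estimate for $B_\varepsilon$) in the form $\norm{p}_{L^2(0,1)}^2\le\tfrac{4}{\pi^2}\int_0^1\abs{p_x}^2\,dx+\abs{p(0)}^2$ to peel off precisely $-\tfrac{\pi^2}{8}\norm{p}_{L^2(0,1)}^2$ together with an accompanying $+\tfrac{\pi^2}{8}\abs{p(0)}^2$, while leaving the residual $\bigl(\tfrac{d^2 e^{\mu}}{2\mu}-\tfrac12\bigr)\int_0^1\abs{p_x}^2\,dx$.

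Finally I would collect every $\abs{p(0)}^2$ contribution — the $-c$ from the Robin condition $p_x(0)=cp(0)$, the $(2\sinh\mu+4\cosh\mu)b^2$ from the cross terms, the $\tfrac{d^2e^{\mu}}{2\mu}$ from Lemma \ref{a1}, and the $\tfrac{\pi^2}{8}$ from Poincaré — and split $-c=-\tfrac c2-\tfrac c2$ to present them as the two brackets $\bigl(\tfrac{\pi^2}{8}-\tfrac c2\bigr)$ and $\bigl(\tfrac{d^2e^{\mu}}{2\mu}+(2\sinh\mu+4\cosh\mu)b^2-\tfrac c2\bigr)$ appearing in \eqref{dif_ineq}, the split being chosen so that the two conditions $c\ge\tfrac{\pi^2}{4}$ and the $b,d$ smallness hypotheses later render each bracket nonpositive separately. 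The estimate is first obtained for strong solutions, where differentiating $E$ is legitimate, and then extended to mild solutions by the usual density argument. I expect the main obstacle to be exactly this apportioning of a finite dissipation budget: every constant in \eqref{dif_ineq} is rigid, so the free parameter $\delta$ and the fraction of $\int_0^1\abs{p_x}^2\,dx$ handed to Poincaré must be chosen to hit all target coefficients at once, and an inattentive choice would either leave an uncontrolled $\abs{u(1)}^2$ term or spoil one of the displayed brackets.
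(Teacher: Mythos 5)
Your proposal follows essentially the same route as the paper's proof: differentiate $E$ along strong solutions to get $\dot E=(A_\varepsilon Y,Y)_{\mathcal{H}}$, reuse the dissipativity computation \eqref{in}--\eqref{delta_ineq} with a calibrated Young parameter $\delta$, absorb $\abs{u(1)}^2$ via Lemma \ref{trace_reduced} and convert $\abs{p(1)}^2$ via Lemma \ref{a1}, then spend half of $-\int_0^1\abs{p_x}^2dx$ through the Wirtinger-type inequality (Lemma \ref{wirtinger}) to produce $-\tfrac{\pi^2}{8}\norm{p}^2_{L^2(0,1)}$ and split $-c=-\tfrac c2-\tfrac c2$ into the two displayed brackets. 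The only divergence is your choice $\delta=\tfrac{2\mu e^{-\mu}}{d^2}$ against the paper's stated $\delta=\tfrac{4\mu e^{-\mu}}{d^2}$; yours is in fact the value consistent with the coefficients $\tfrac{d^2e^{\mu}}{2\mu}$ in \eqref{dif_ineq} and with retaining a nontrivial wave dissipation term, so this is a correction of a slip carried over from Lemma \ref{maximal_monotone} rather than a gap.
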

\begin{proof}
For $Y=(u,u_t,p)$ strong solution to \eqref{sgs}, let us consider the following energy functional 
\begin{equation*}
E(u,u_t,p)  = \frac12 \norm{(u,u_t,p)}^2_{\mathcal{H}}.
\end{equation*}
Taking the derivative of $E$ along the strong solutions of \eqref{sgs} we get that
\begin{equation*}
\dot E (u,u_t,p) = \left(\dot Y(t),Y(t)\right)_{\mathcal{H}} = \left(A_\varepsilon Y(t),Y(t)\right)_{\mathcal{H}}.
\end{equation*}
Following the computations for the dissipativity of \eqref{defop}, and taking $\delta = \frac{4\mu e^{-\mu}}{d^2}$ in \eqref{delta_ineq} we get that
\begin{multline*}
\dot E(u,u_t,p) \leq - \frac{\mu}{2} \norm{(u,u_t)}_H  -\int_0^1  \abs{p_x}^2 dx +\frac{d^2e^{\mu}}{4\mu}\abs{p(1,t)}^2-c\abs{p(0,t)}^2 ,\\ +2\sinh{\mu}b^2\abs{p(0,t)}^2 + 4\cosh{\mu}b^2\abs{p(0,t)}^2.
\end{multline*}
Using Lemma \ref{wirtinger} we get the desired inequality.
\end{proof}

Now we are able to prove Theorem \ref{claim_stab}.

\begin{proof}[\textbf{Proof of Theorem \ref{claim_stab}}]
Suppose that $a,b,c,d\in \R$ satisfies conditions $(i)-(v)$ of Theorem \ref{claim_stab}. Note that, conditions $(i)-(iii)$ satisfy the hypothesis of Proposition \ref{maximal_monotone}. On the other hand, taking $c\geq \tfrac{\pi^2}{4}$, $1+2(\sinh{(\mu)}+2\cosh{(\mu)})b^2 \leq c$ and $d\leq  \sqrt{\mu e^{-\mu}}$, wich satisfies conditions $(iii),(iv),(v)$ from Theorem \ref{claim_stab}, respectively. Therefore, from \eqref{dif_ineq}, we get that $\dot E(u,u_t,p) \leq -\min\{\tfrac{\mu}{2}, \frac{\pi^2}{8}\}E(u,u_t,p)$. As a consequence the origin of \eqref{system} is exponentially stable for the $H^1(0,1)\times L^2(0,1)\times L^2(0,1)-$norm.
\end{proof}
\section{Stability analysis for small $\varepsilon$}
\label{Stab}
The goal of this section is to prove Theorem \ref{full_stab} by studying the reduced order system and the boundary-layer system, which are given when applying SPM to \eqref{system}. Roughly speaking, Theorem \ref{full_stab} states that the full system is stable whenever the subsystems are exponentially stable, and the parameter $\varepsilon>0$ is small enough. First, let us compute the equilibrium point of $p$ when $\varepsilon=0$. That is, we have to solve the following boundary value problem
\begin{equation*}
\begin{cases}
    p_{xx}(x,t) = 0, & (x,\tau)\in (0,1)\times (0,\infty), \\
    p_x(0,t) = c p(0,t), & t\in(0,\infty),\\
    p_x(1,t) = d u(1,t),& t\in(0,\infty),\\
\end{cases}
\end{equation*}
Directly integrating $p_{xx}(x,t) = 0$ between $0$ and $x$ two times and using the boundary conditions we have that $p(x,t) = \left(\frac{d}{c}  + x d\right) u(1,t)$ is the unique solution, which is considered as the quasi steady-state. We will use this equilibrium point to deduce the the approximated subsystems.
\subsection{Reduced order system}
\label{ReducedStab}
To find the reduced order system we have to set $\varepsilon=0$. As it was shown before, we have that $p(0,t) = \frac{d}{c}u(1,t)$ is defined for all $t\geq 0$, which is the trace of the equilibrum point computed early. Replacing the latter trace equality into the wave equation of \eqref{system}, we have that the  reduced order system, whose state is denoted by $\bar u$, is given by
\begin{equation}
\label{reduced}
\begin{cases}
    \bar u_{tt}  = \bar u_{xx},&  (x,t)\in(0,1)\times(0,\infty),  \\
     \bar u(0,t)  = 0,&  t\in (0,\infty) \\
     \bar u_x(1,t) = - a \bar u_t(1,t) + \frac{bd}{c} \bar u(1,t),&  t\in (0,\infty)\\
     \bar u(x,0) = \bar u_0(x), \bar u_t(x,0) = \bar u_1(x),&  x\in(0,1).
\end{cases}
\end{equation}
We want to study the asymptotic behaviour of \eqref{reduced}. For any $(\bar u_0,\bar u_1)\in H^2(0,1) \times H^1(0,1)$ such that the compatibility conditions $\bar u_0 (0)=\bar u_1(0) =0$ and $ (\bar u_0)_x(1)=-a\bar u_0(1)+\tfrac{bd}{c}\bar u_0(1)$ are satisfied, there exists a unique strong solution of \eqref{reduced} (See for instance \cite[Chapter 7]{komornik1997exact})
$$(\bar u, \bar u_t)\in C([0,\infty);H^2(0,1)\cap H^1(0,1)\times H^1(0,1)) \cap C^1([0,\infty);H^1(0,1)\times L^2(0,1)).$$ 
The mild solution to \eqref{reduced} is defined thorugh an extension by continuity of the semigroup that gives us this solution.

For $\bar u$ the unique strong solution of \eqref{reduced}, we define the Lyapunov functional
\begin{equation}
\label{v1}
V_1(\bar u, \bar u_t) = \frac{1}{2}\int_0^1 e^{\mu x} (\bar u_t+\bar u_x)^2+e^{-\mu x} (\bar u_t-\bar u_x)^2 dx,
\end{equation}
which was first introduced in \cite{smyshlyaev2010boundary}, and used in \cite{cerpa2019singular} when applying the SPM to a system coupling a slow ODE together a fast wave equation.
The following result states the stability of \eqref{reduced} around the origin
\begin{proposition}
\label{reduced_stab}
Let $\mu^\ast$ be given as in Theorem \ref{claim_stab} and $a\in \R$ satisfying $(i)$. Then, there exists a constant $\kappa=\kappa(\mu)>0$ such that the unique mild solution $\bar u\in C([0,\infty);H^1(0,1))\cap C^1([0,\infty); L^2(0,1)) $ of \eqref{reduced} satisfies
\begin{equation*}
\norm{(\bar u(t),\bar u_t(t))}_{H^1(0,1)\times L^2(0,1)}^2 \leq \kappa e^{-\frac{\mu}{2} t} \norm{(\bar u_0, \bar u_1)}^2_{H^1(0,1)\times L^2(0,1)},
\end{equation*}
whenever 
$\frac{\abs{bd}}{\abs{c}} \leq \sqrt{\frac{\mu e^{-\mu}}{\sinh{\mu}+2\cosh{\mu}}}$ and $0<\mu \leq \mu^\ast$.
\end{proposition}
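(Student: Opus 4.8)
The plan is to run a Lyapunov argument with $V_1$, carried out first for strong solutions (where every manipulation below is legitimate) and then transferred to mild solutions by density and the continuity of the semigroup provided by Section \ref{ReducedStab}. Introducing the Riemann invariants $w_{\pm}=\bar u_t\pm\bar u_x$ and using $\bar u_{tt}=\bar u_{xx}$ to turn time derivatives into space derivatives via $\partial_t w_+=\partial_x w_+$ and $\partial_t w_-=-\partial_x w_-$, I would differentiate $V_1$ along the flow and integrate by parts. This produces the volume dissipation $-\mu V_1$ together with boundary contributions at $x=0$ and $x=1$:
\[
\dot V_1=-\mu V_1+\tfrac12 e^{\mu}w_+(1)^2-\tfrac12 e^{-\mu}w_-(1)^2-\tfrac12\big(w_+(0)^2-w_-(0)^2\big).
\]
Since $\bar u(0,t)=0$ forces $\bar u_t(0,t)=0$, at $x=0$ one has $w_+(0)^2=w_-(0)^2=\bar u_x(0)^2$, so the $x=0$ terms cancel and only the $x=1$ contribution remains to be controlled.

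At $x=1$ I would substitute the boundary condition $\bar u_x(1)=-a\bar u_t(1)+\tfrac{bd}{c}\bar u(1)$, which gives $w_+(1)=(1-a)\bar u_t(1)+s$ and $w_-(1)=(1+a)\bar u_t(1)-s$ with $s:=\tfrac{bd}{c}\bar u(1)$. Expanding, the boundary term becomes a quadratic form in $\bar u_t(1)$ and $s$, with $\bar u_t(1)^2$-coefficient $\tfrac12 e^{\mu}(1-a)^2-\tfrac12 e^{-\mu}(1+a)^2$, cross coefficient $e^{\mu}(1-a)+e^{-\mu}(1+a)$, and $s^2$-coefficient $\sinh\mu$. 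The decisive step is to dispose of the $\bar u_t(1)^2$ term, because the trace of the velocity $\bar u_t$ is \emph{not} controlled by $V_1$; hence its coefficient must be rendered nonpositive rather than estimated. Applying Young's inequality to the cross term in the balanced form $e^{\mu}(1-a)\bar u_t(1)\,s\le\tfrac14 e^{\mu}(1-a)^2\bar u_t(1)^2+e^{\mu}s^2$, and symmetrically for the $e^{-\mu}(1+a)$ piece, collapses the total $\bar u_t(1)^2$-coefficient to $\tfrac34 e^{\mu}(1-a)^2-\tfrac14 e^{-\mu}(1+a)^2=-\tfrac12 q(a,\mu)$, which is nonpositive exactly when $a\in[\eta^{-1},\eta]\setminus\{1\}$ and $\mu\le\mu^\ast$ by Remark \ref{qmu}. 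This term is then discarded, leaving a leftover boundary contribution proportional to $(\sinh\mu+2\cosh\mu)\,s^2$.

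Finally I would estimate $s^2=\tfrac{b^2d^2}{c^2}\bar u(1)^2$. Using $\bar u(0)=0$ and Cauchy--Schwarz (Lemma \ref{trace_reduced}) gives $\bar u(1)^2\le\int_0^1\bar u_x^2\,dx$, and since both weights in $V_1$ are bounded below by $e^{-\mu}$ on $[0,1]$ one gets $\int_0^1\bar u_x^2\,dx\le e^{\mu}V_1$. Thus the whole $x=1$ contribution is dominated by a multiple of $(\sinh\mu+2\cosh\mu)\tfrac{b^2d^2}{c^2}e^{\mu}V_1$, and the hypothesis $\tfrac{\abs{bd}}{\abs{c}}\le\sqrt{\mu e^{-\mu}/(\sinh\mu+2\cosh\mu)}$ is precisely what makes this a fraction of the available dissipation $\mu V_1$, yielding $\dot V_1\le-\tfrac{\mu}{2}V_1$. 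Grönwall's inequality then gives $V_1(t)\le e^{-\mu t/2}V_1(0)$, and the conclusion follows from the two-sided equivalence $e^{-\mu}\norm{(\bar u,\bar u_t)}_{H^1_L\times L^2}^2\le V_1\le e^{\mu}\norm{(\bar u,\bar u_t)}_{H^1_L\times L^2}^2$ (the weights lie in $[e^{-\mu},e^{\mu}]$) together with Poincaré's inequality and the density argument passing from strong to mild solutions. I expect the genuine obstacle to be the middle step: forcing the $\bar u_t(1)^2$-coefficient to be nonpositive, since the velocity trace cannot be absorbed into the energy, and it is exactly this constraint that pins down the admissible range of $a$ and the threshold $\mu^\ast$.
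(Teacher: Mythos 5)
Your proposal is essentially the paper's own proof: the same Lyapunov functional $V_1$, the same integration by parts producing $-\mu V_1$ plus boundary terms (the paper silently uses the cancellation at $x=0$ that you make explicit), the same substitution of the boundary condition at $x=1$, Young's inequality with the same balance (your $xy\le \tfrac14 x^2+y^2$ is the paper's choice $\delta=\tfrac12$), the resulting velocity-trace coefficient $-\tfrac12 q(a,\mu)$ (the paper's $-q(a,\mu)$ and its coefficient $2\sinh\mu+4\cosh\mu$ differ from your $-\tfrac12 q(a,\mu)$ and $\sinh\mu+2\cosh\mu$ only because the paper's computation of $\tfrac{d}{dt}V_1$ carries a spurious factor of $2$ in the integrand but not in the $-\mu V_1$ term), the absorption of $\abs{\bar u(1,t)}^2$ through Lemma \ref{trace_reduced}, and the same Gronwall, norm-equivalence and density conclusion. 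Your Riemann-invariant bookkeeping is a cosmetic improvement, and your identification of the sign of the $\bar u_t(1,t)^2$ coefficient as the decisive constraint is exactly the paper's point.

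One quantitative caveat, which you share with the paper rather than it being a gap relative to it: by your own estimates the leftover boundary term is bounded by $(\sinh\mu+2\cosh\mu)\tfrac{b^2d^2}{c^2}e^{\mu}V_1$, and under the stated hypothesis this bound equals $\mu V_1$, i.e.\ the \emph{whole} available dissipation, not ``a fraction'' of it; so the argument as written yields only $\tfrac{d}{dt}V_1\le 0$, not $\tfrac{d}{dt}V_1\le-\tfrac{\mu}{2}V_1$. To reach the claimed rate one must reserve half the dissipation for Gronwall and absorb the boundary term with the other half, which requires the stronger hypothesis $\tfrac{\abs{bd}}{\abs{c}}\le\bigl(\mu e^{-\mu}/(2(\sinh\mu+2\cosh\mu))\bigr)^{1/2}$ (or a strict inequality in the stated hypothesis, with a correspondingly reduced rate). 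The paper's written proof has the same slippage (in fact worse, because its inflated boundary coefficients would formally give $\tfrac{d}{dt}V_1\le+\mu V_1$), so your reconstruction is faithful to it; just be aware that in both versions the constant in the hypothesis and the advertised rate $\tfrac{\mu}{2}$ do not quite match.
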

\begin{proof}
Taking the time derivative of $V_1$ along the strong solutions of \eqref{reduced}, we get
\begin{equation*}
\begin{aligned} \frac{d}{dt} V_1(\bar u, \bar u_t) &= \int_0^1 2 e^{\mu x}(\bar u_x+\bar u_t)(\bar u_{xt}+\bar u_{tt})+2e^{-\mu x}(\bar u_x-\bar u_t)(\bar u_{xt}-\bar u_{tt})dx.
\end{aligned}
\end{equation*}
Integrating by parts, it follows that
\begin{equation*}
\begin{aligned}
\frac{d}{dt}V_1(\bar u, \bar u_t)&=-\mu V_1(\bar u, \bar u_t) +\left(e^{\mu x}(\bar u_x+\bar u_t)^2 - e^{-\mu}(\bar u_x-\bar u_t)^2\right)\big\vert_{x=0}^{x=1} \\&= -\mu V_1(\bar u, \bar u_t) + \left(e^\mu(\bar u_x(1,t)+\bar u_t(1,t))^2- e^{-\mu}(\bar u_x(1,t)-\bar u_t(1,t))^2 \right).
\end{aligned}
\end{equation*}
Using boundary conditions we get 
\begin{equation*}
\begin{aligned}
\frac{d}{dt} V_1(\bar u, \bar u_t)&=-\mu V_1(\bar u, \bar u_t)+ \left((1- a)^2e^\mu - (1+a)^2 e^{-\mu}\right)\abs{\bar u_t(1,t)}^2 +
2\sinh(\mu) \left(\tfrac{bd}{c}\right)^2\abs{\bar u(1,t)}^2,\\&+e^\mu 2(1-a)\bar u_t(1,t)\tfrac{bd}{c} \bar u(1,t) +  e^{-\mu}2(1+a)\bar u_t(1,t)\tfrac{bd}{c}\bar{u}(1,t).
\end{aligned}
\end{equation*}
Using Young's inequality in the crossed terms, it holds that
\begin{equation}
\label{young_eta}
\begin{aligned}
\frac{d}{dt} V_1(\bar u, \bar u_t)&=-\mu V_1(\bar u, \bar u_t)+ \left((1- a)^2e^\mu - (1+a)^2 e^{-\mu}\right)\abs{\bar u_t(1,t)}^2 +
2\sinh(\mu) \left(\tfrac{bd}{c}\right)^2\abs{\bar u(1,t)}^2,\\&+ \delta \left( e^{\mu}(1-a)^2 +e^{-\mu}(1+a)^2\right) \abs{\bar u_t(1,t)}^2 + \frac{2}{\delta} \left(\frac{bd}{c}\right)^2\cosh{(\mu)} \abs{\bar u(1,t)}^2.
\end{aligned}
\end{equation}
Taking $\delta=\frac12$,  we obtain that
\begin{equation}
\label{lyapunov_reduced}
\begin{aligned} \frac{d}{dt} V_1(\bar u, \bar u_t) &\leq -\mu V_1(\bar u, \bar u_t) -q(a,\mu)\abs{\bar u_t(1,t)}^2 + (2\sinh(\mu)+4\cosh{\mu}) \left(\tfrac{bd}{c}\right)^2\abs{\bar u(1,t)}^2,\end{aligned}
\end{equation}
where $q(a,\mu)$ is given in Remark \ref{qmu}.  Using Lemma \ref{trace_reduced}, and by asking $a,b,c,d$ to satisfy $(i)-(v)$ of Theorem \ref{claim_stab} we have that $q(a,\mu)\geq 0$. As a consequence, we conclude the desired result. Therefore, we get the exponential stability of the reduced system for initial data $(\bar u_0,\bar u_1)\in H^2(0,1) \times H^1(0,1)$ such that $\bar u_0 (0) =0$. Since $(\bar u_0,\bar u_1)\in H^2(0,1) \times H^1(0,1)$ such that $\bar u_0 (0) =0$ is dense in $H^1(0,1)\times L^2(0,1)$ with $u_0(0)=0$, we have that the exponential decay holds for less regular initial data, i.e., for any $(\bar u_0, \bar u_1)\in H^1(0,1)\times L^2(0,1)$ the unique mild solution $(\bar u, \bar u_t)\in C([0,\infty);H^1(0,1)\times L^2(0,1))$ to \eqref{reduced} is exponentially stable around the origin in the $H^1(0,1)\times L^2(0,1)-$norm.
\end{proof}
\begin{remarkx}
\label{a-eta-remark}
Taking $\delta$ such that $\delta\in (0,1)$ in \eqref{young_eta} also works. We have to ensure that the term involving the trace $\abs{\bar u_t(1,t)}^2$ has negative sign. In order to ease the notation in this article we take $\delta=\frac12$. Taking $\delta$ closer to $1$ allows us more flexibility in the parameter $a$, since $\eta>1$ is such that
$$ (1+\delta) e^\mu(1-a)^2-(1-\delta) e^{-\mu}(1+a)^2 \leq 0, \hspace{.1cm} \text{ for } \hspace{.1cm}  a\in [\eta^{-1},\eta], \hspace{.2cm} a\not=1. $$
\end{remarkx}
Following \cite[Section III]{cerpa2019singular}, and as it will be justified later, we are going to treat an extra source term that depends on the trace $\bar u_t(1,\cdot)$. We end this section with the following corollary that concerns the trace $\bar u_t(1,\cdot)$ through the following observability inequality.
\begin{corollary}
\label{r_coro}
Let $a\in \R$ satisfying $(i)$ in Theorem \ref{claim_stab}. Then, the unique solution $\bar u$ of \eqref{reduced} satisfies
$$ q(a,\mu)\int_0^t e^{-\frac{\mu}{2}(t-s)}\abs{\bar u_t(1,s)}^2 ds \leq \kappa e^{-\frac{\mu}{2}t}\norm{(\bar u_0,\bar u_1)}^2_{H^1(0,1)\times L^2(0,1)},$$
whenever 
$\frac{\abs{bd}}{\abs{c}} \leq \sqrt{\frac{\mu e^{-\mu}}{\sinh{\mu}+2\cosh{\mu}}}.$ Here $\kappa=\kappa(\mu)$ is the constant stated in Proposition \ref{reduced_stab}.
\end{corollary}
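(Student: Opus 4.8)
The plan is to integrate the differential inequality for the Lyapunov functional $V_1$ against a suitable exponential weight, turning the boundary dissipation generated in the energy estimate into the claimed time-integrated observability bound. First I would revisit the proof of Proposition \ref{reduced_stab}: starting from \eqref{lyapunov_reduced} and dominating the source term $(2\sinh\mu+4\cosh\mu)(\tfrac{bd}{c})^2\abs{\bar u(1,t)}^2$ by means of Lemma \ref{trace_reduced} together with the smallness assumption $\tfrac{\abs{bd}}{\abs{c}}\le\sqrt{\tfrac{\mu e^{-\mu}}{\sinh\mu+2\cosh\mu}}$, one obtains, for every strong solution, the refined inequality
\[
\frac{d}{dt}V_1(\bar u,\bar u_t)\le -\frac{\mu}{2}\,V_1(\bar u,\bar u_t)-q(a,\mu)\abs{\bar u_t(1,t)}^2 .
\]
The essential observation is that, in order to prove Proposition \ref{reduced_stab} one simply discards the nonpositive term $-q(a,\mu)\abs{\bar u_t(1,t)}^2$ (legitimate since $q(a,\mu)\ge0$ under $(i)$ and $\mu\le\mu^\ast$); here instead I want to retain it, as it is precisely this term that will be estimated.

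The second step is the integrating-factor manipulation. Multiplying the refined inequality by $e^{\frac{\mu}{2}s}$ and using $e^{\frac{\mu}{2}s}\bigl(\tfrac{d}{ds}V_1+\tfrac{\mu}{2}V_1\bigr)=\tfrac{d}{ds}\bigl(e^{\frac{\mu}{2}s}V_1\bigr)$, I would obtain
\[
\frac{d}{ds}\Bigl(e^{\frac{\mu}{2}s}V_1(\bar u(s),\bar u_t(s))\Bigr)\le -q(a,\mu)\,e^{\frac{\mu}{2}s}\abs{\bar u_t(1,s)}^2 .
\]
Integrating from $0$ to $t$ and discarding the nonnegative endpoint contribution $e^{\frac{\mu}{2}t}V_1(\bar u(t),\bar u_t(t))\ge0$ yields $q(a,\mu)\int_0^t e^{\frac{\mu}{2}s}\abs{\bar u_t(1,s)}^2\,ds\le V_1(\bar u_0,\bar u_1)$. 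Multiplying through by $e^{-\frac{\mu}{2}t}$ converts the weight into $e^{-\frac{\mu}{2}(t-s)}$, and the equivalence of $V_1$ with the squared $H^1(0,1)\times L^2(0,1)$ norm (with a constant that may be taken equal to the $\kappa=\kappa(\mu)$ of Proposition \ref{reduced_stab}) gives $V_1(\bar u_0,\bar u_1)\le\kappa\norm{(\bar u_0,\bar u_1)}_{H^1(0,1)\times L^2(0,1)}^2$, which is exactly the asserted inequality.

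Finally I would treat the regularity issue: the computation above is licit for strong solutions, for which $\bar u_t(1,\cdot)$ is a genuine locally square-integrable trace. To reach arbitrary $(\bar u_0,\bar u_1)\in H^1(0,1)\times L^2(0,1)$ I would pass to the limit along the dense set $H^2(0,1)\times H^1(0,1)$ of compatible data already used in Proposition \ref{reduced_stab}, invoking the continuity of the solution map and the fact that the bound just obtained is uniform in the data to control the trace term in the limit.

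I expect the main obstacle to lie in the first step rather than in the integration, which is routine. One must verify that the source term $\abs{\bar u(1,t)}^2$ can be absorbed through Lemma \ref{trace_reduced} and the smallness condition in such a way that the boundary dissipation $-q(a,\mu)\abs{\bar u_t(1,t)}^2$ is not consumed in the process but survives with the rate $\tfrac{\mu}{2}$ and the full coefficient $q(a,\mu)$ appearing in the statement; this is the delicate bookkeeping, since the same trace term is what drives the decay in Proposition \ref{reduced_stab}. The density and trace argument for mild solutions is a secondary technical point, handled by the hidden regularity of the one-dimensional wave equation together with the exponential-in-time control already available.
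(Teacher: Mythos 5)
Your proposal is correct and follows essentially the same route as the paper's own (very brief) proof: the paper likewise absorbs the source term $\abs{\bar u(1,t)}^2$ using Lemma \ref{trace_reduced} together with the smallness condition so that the inequality $\frac{d}{dt}V_1 \leq -\frac{\mu}{2}V_1 - q(a,\mu)\abs{\bar u_t(1,t)}^2$ survives with the boundary dissipation intact, and then invokes Gronwall's lemma, which is exactly your integrating-factor computation followed by discarding the nonnegative endpoint term and using the equivalence of $V_1$ with the squared $H^1(0,1)\times L^2(0,1)$ norm. Your final density step for mild solutions is left implicit in the paper (it appears only in the proof of Proposition \ref{reduced_stab}), but it is the same mechanism, so the two arguments coincide.
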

This corollary follows by using Lemma \ref{trace_reduced} to absorb the term $\abs{\bar u(1,t)}^2$. Then, we use Gronwall's Lemma in the resulting differential inequality. This result also implies that, it generates a suitable admissible observation operator in the sense of \cite[Definition 4.3.1]{tucsnak2009observation}.


\subsection{Boundary-layer system} 
\label{BoundaryStab}
Let us define $\tau:=\frac{t}{\varepsilon}$ and consider $\bar p(x,\tau):= p(x,\tau) - \left(\frac{d}{c}  + x d\right) u(1,\tau)$, defined for all $(x,\tau)\in [0,1]\times [0,\infty)$. Note that,
\begin{equation*}
\bar p_\tau(x,\tau) = p_\tau(x,\tau) - \varepsilon  \left(\frac{d}{c}  + x d\right) u_t(1,\tau), \hspace{.3cm} \forall (x,\tau)\in  [0,1]\times (0,\infty).
\end{equation*}
Therefore, taking $\varepsilon=0$ yields to $\bar p_\tau(x,\tau) = p_\tau(x,\tau)$. Similarly, one has $\bar p_{xx}(x,\tau)=p_{xx}(x,\tau)$. For the boundary conditions, one needs to compute the space derivative of $\bar p$, that is defined as:
$$
\bar p_x(x,\tau):= p_x(x,\tau) - d u(1,\tau),\: \forall (x,\tau)\in (0,1)\times (0,\infty).
$$
The boundary condition at $x=0$ is given by
$\bar{p}_x(0,\tau) = p_x(0,\tau)-du(1,\tau)=cp(0,\tau)-p_x(1,\tau),$
and thus
$c\bar{p}(0,\tau)=p_{x}(0,\tau) - du(1,\tau)=\bar{p}_x(0,\tau).$ 
For the boundary condition at $x=1$, we have directly 
$\bar p_x(1,\tau)=p_x(1,\tau)-du(1,\tau) =0.$
Finally, the boundary-layer system is given by
\begin{equation}
\label{eq:boundary_layer}
\begin{cases}
    \bar p_\tau  = \bar p_{xx}, & (x,\tau)\in (0,1)\times (0,\infty), \\
    \bar p_x(0,\tau) = c \bar p(0,\tau), & \tau\in(0,\infty),\\
    \bar p_x(1,\tau) = 0,& \tau\in(0,\infty),\\
    \bar p(x,0) = \bar p_0(x),& x\in(0,1).
\end{cases}
\end{equation}
This system has a unique strong solution $\bar p\in C([0,\infty);H^2(0,1))\cap C^1([0,\infty);L^2(0,1))$ provided that $\bar p_0\in H^2(0,1)$ is such that $\bar (p_0)_x(0) =c \bar p_0(0)$ and $\bar (p_0)_x(1)=0$. Indeed,
$$A:D(A)\subset L^2(0,1)\to L^2(0,1),$$
is a closed, densely defined operator with $D(A)=\{p\in H^2(0,1) \text{ : } p_x(0)=cp(0),p_x(1)=0\}$. Simple computations can show that this operator is dissipative. The argument for the maximality of $A$ is the same as the one given in Subsection \ref{well-posed}.
Concerning the exponential stability of \eqref{eq:boundary_layer}, we will use the following Lyapunov functionals
\begin{equation}
\label{Wlyapunov_layer}
W_2(\bar p) = \frac12\norm{\bar p(\cdot,\tau)}_{L^2(0,1)}^2, \hspace{.3cm} \tau \geq 0,
\end{equation} 
\begin{equation}
\label{Vlyapunov_layer}
V_2(\bar p)=\frac{1}{2}\int_0^1 \abs{\bar p_x(\cdot,\tau)}^2dx + \frac{c}{2} \abs{\bar p(0,\tau)}^2,\hspace{.3cm} \tau \geq 0, \hspace{.3cm} c>0  .
\end{equation}
The next result states the exponential decay result for \eqref{eq:boundary_layer} in the $L^2(0,1)-$norm.
\begin{lemma}
For each $c\geq \frac{\pi^2}{4}$ and $\bar p_0\in L^2(0,1)$, the unique mild solution $p\in C([0,\infty);L^2(0,1))$ to \eqref{eq:boundary_layer} satisfies
$$ \norm{\bar p(\cdot,\tau)}^2_{L^2(0,1)} \leq  e^{-\frac{\pi^2}{2}\tau} \norm{\bar p_0}_{L^2(0,1)}, \hspace{.3cm} \forall \tau \geq 0. $$
\end{lemma}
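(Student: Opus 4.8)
The plan is to run the standard dissipation estimate for the Lyapunov functional $W_2$ introduced in \eqref{Wlyapunov_layer}, first for smooth data and then to pass to the stated mild regularity by density. Since the generator $A$ of \eqref{eq:boundary_layer} is m-dissipative on $L^2(0,1)$ (as already noted in this subsection) it generates a $C_0$-semigroup of contractions, and $D(A)$ is dense; hence it suffices to establish the decay bound for strong solutions issued from $\bar p_0\in D(A)$, and then recover the case $\bar p_0\in L^2(0,1)$ by approximating $\bar p_0$ in $L^2(0,1)$ by elements of $D(A)$ and using the continuity of $\tau\mapsto \bar p(\cdot,\tau)$ in $L^2(0,1)$ together with the uniform-in-data nature of the estimate.

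For a strong solution I would differentiate $W_2(\bar p)=\tfrac12\norm{\bar p(\cdot,\tau)}_{L^2(0,1)}^2$ along the trajectory and use $\bar p_\tau=\bar p_{xx}$ to write $\tfrac{d}{d\tau}W_2=\int_0^1\bar p\,\bar p_{xx}\,dx$. Integrating by parts produces the boundary terms $\bar p(1,\tau)\bar p_x(1,\tau)-\bar p(0,\tau)\bar p_x(0,\tau)$ and the bulk term $-\int_0^1|\bar p_x|^2\,dx$; inserting the boundary conditions $\bar p_x(1,\tau)=0$ and $\bar p_x(0,\tau)=c\,\bar p(0,\tau)$ from \eqref{eq:boundary_layer} collapses the boundary contribution to a single sign-definite term, giving
\begin{equation*}
\frac{d}{d\tau}W_2(\bar p) = -\int_0^1 |\bar p_x|^2\,dx - c\,\abs{\bar p(0,\tau)}^2 .
\end{equation*}

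The decisive step is to dominate the energy $W_2$ by this dissipation. Here I would invoke a Poincaré/Wirtinger-type inequality of the same kind already used to derive \eqref{dif_ineq}, namely that for every $f\in H^1(0,1)$ one has $\int_0^1|f_x|^2\,dx + c\,\abs{f(0)}^2 \ge \tfrac{\pi^2}{4}\int_0^1|f|^2\,dx$ as soon as $c\ge \tfrac{\pi^2}{4}$. Applying it to $f=\bar p(\cdot,\tau)$ yields $\tfrac{d}{d\tau}W_2 \le -\tfrac{\pi^2}{4}\norm{\bar p(\cdot,\tau)}_{L^2(0,1)}^2 = -\tfrac{\pi^2}{2}W_2$, and Grönwall's inequality then gives $W_2(\tau)\le e^{-\pi^2\tau/2}W_2(0)$, i.e. the claimed bound $\norm{\bar p(\cdot,\tau)}_{L^2(0,1)}^2\le e^{-\pi^2\tau/2}\norm{\bar p_0}_{L^2(0,1)}^2$ for all $\tau\ge 0$.

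The only genuinely delicate point is the Poincaré/Wirtinger inequality of the decisive step: it is exactly there that the hypothesis $c\ge \tfrac{\pi^2}{4}$ enters, and securing the constant $\tfrac{\pi^2}{4}$ on the right-hand side requires handling the Robin trace $\bar p(0,\tau)$ jointly with the gradient term $\int_0^1|\bar p_x|^2\,dx$, since neither quantity alone controls $\norm{\bar p}_{L^2(0,1)}$ (the function carries no homogeneous Dirichlet trace). The remaining ingredients — the integration by parts, the substitution of the boundary conditions, the Grönwall step, and the density argument — are routine and require no further structural input.
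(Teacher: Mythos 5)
Your proposal is correct and follows essentially the same route as the paper: differentiate $W_2$ along strong solutions, integrate by parts and insert the boundary conditions to obtain $\tfrac{d}{d\tau}W_2=-\int_0^1|\bar p_x|^2\,dx-c|\bar p(0,\tau)|^2$, then apply the Wirtinger-type inequality (Lemma \ref{wirtinger} with $i=0$, which is exactly your combined estimate once $c\ge\tfrac{\pi^2}{4}$), conclude by Gr\"onwall, and extend to mild solutions by density of the domain in $L^2(0,1)$. As a minor remark, your squared bound $\norm{\bar p(\cdot,\tau)}_{L^2(0,1)}^2\le e^{-\pi^2\tau/2}\norm{\bar p_0}_{L^2(0,1)}^2$ is the one the argument actually yields; the missing square on the right-hand side of the stated lemma appears to be a typo in the paper.
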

\begin{proof}
First we take $\bar p_0\in H^2(0,1)$ is such that $\bar (p_0)_x(0) =c \bar p_0(0)$ and $\bar (p_0)_x(1)=0$. 
Supposing that $c$ is any positive constant, taking the time derivative of $W_2(\bar p)$ along strong solutions to \eqref{eq:boundary_layer}, we get
\begin{equation}
\begin{aligned}
\frac{d}{d\tau} W_2(\bar p)& =   \int_0^1 \bar p \bar p_\tau dx = \int_0^1 \bar p \bar p_{xx} dx = -\int_0^1 \abs{\bar p_x}^2 dx -c\abs{\bar p(0,\tau)}^2.
\end{aligned} 
\end{equation} 
Using Wirtinger's type we get
\begin{equation}
\label{gronwall_w2}
\frac{d}{d\tau} W_2(\bar p) \leq -\frac{\pi^2}{4} \int_0^1 \abs{\bar p}^2 dx+ \left(\frac{\pi^2}{4}-c\right)\abs{\bar p(0,\tau)}^2.
\end{equation}
It follows directly that if $c\geq \tfrac{\pi^2}{4}$ then the exponential stability in the $L^2(0,1)-$norm is achieved. By density of the set $\{p\in H^2(0,1) \text{ : } p_x(1)=0, \hspace{.3cm} p_x(0)=cp(0)\}$ in $L^2(0,1)$ we have that this result also holds for mild solutions $\bar p\in C([0,\infty);L^2(0,1))$, when taking initial data $\bar p_0\in L^2(0,1)$.
\end{proof}
Now we state the exponential decay of \eqref{eq:boundary_layer} in the $H^1(0,1)-$ norm.
\begin{lemma}
\label{decay_h1}
Let $c\geq \frac{\pi^2}{8}$. Let $\bar p$ be the unique mild solution of \eqref{eq:boundary_layer} with initial condition $p_0\in H^1(0,1)$. There exists a constant $K>0$ depending only on $c>0$ such that
$$\norm{\bar p(\cdot,\tau)}^2_{H^1(0,1)} \leq  K e^{-\frac{\pi^2}{4}\tau} \norm{\bar p_0}^2_{H^1(0,1)}, \hspace{.3cm} \forall \tau \geq 0 .  $$
Moreover, the unique mild solution has the following improved regularity $p\in C([0,\infty);H^1(0,1))$, whenever $p_0\in H^1(0,1)$. The improved regularity is in the sense that the solution is more regular whenever the initial condition is more regular itself.
\end{lemma}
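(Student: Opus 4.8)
The plan is to run a Lyapunov argument with the functional $V_2$ from \eqref{Vlyapunov_layer}, exactly as suggested by its appearance in the statement, and then to convert the resulting decay of $V_2$ into a decay of $\norm{\bar p}_{H^1(0,1)}^2$ via the equivalence of the two quantities. As in the preceding $L^2$ lemma, I would first establish everything for strong solutions, i.e.\ for $\bar p_0\in D(A)$ (so $\bar p_0\in H^2(0,1)$ with $(\bar p_0)_x(0)=c\bar p_0(0)$ and $(\bar p_0)_x(1)=0$), and recover the case $\bar p_0\in H^1(0,1)$ at the end by density.

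First I would differentiate $V_2$ along strong solutions of \eqref{eq:boundary_layer}, writing $\tfrac{d}{d\tau}V_2=\int_0^1\bar p_x\,\partial_x\bar p_\tau\,dx+c\,\bar p(0,\tau)\bar p_\tau(0,\tau)$. Integrating the first integral by parts gives $[\bar p_x\bar p_\tau]_0^1-\int_0^1\bar p_{xx}\bar p_\tau\,dx$; the boundary term at $x=1$ vanishes because $\bar p_x(1,\tau)=0$, the one at $x=0$ equals $-c\,\bar p(0,\tau)\bar p_\tau(0,\tau)$ by $\bar p_x(0,\tau)=c\bar p(0,\tau)$ and hence cancels the last term of $\tfrac{d}{d\tau}V_2$, and substituting $\bar p_\tau=\bar p_{xx}$ in the remaining integral yields the clean dissipation identity
$$\frac{d}{d\tau}V_2(\bar p)=-\int_0^1\abs{\bar p_{xx}(\cdot,\tau)}^2\,dx.$$

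The core of the proof is then a lower bound of the form $\int_0^1\abs{\bar p_{xx}}^2\,dx\ge r\,V_2(\bar p)$ for a suitable $r>0$. I would apply Wirtinger's inequality (Lemma \ref{wirtinger}) to $\bar p_x$, which vanishes at $x=1$, to bound the gradient part, $\int_0^1\abs{\bar p_x}^2\,dx\le\tfrac{4}{\pi^2}\int_0^1\abs{\bar p_{xx}}^2\,dx$; and to control the boundary part I would use that $\bar p_x(1,\tau)=0$ forces $\bar p_x(0,\tau)=-\int_0^1\bar p_{xx}\,dx$, so that $\abs{\bar p(0,\tau)}^2=c^{-2}\abs{\bar p_x(0,\tau)}^2\le c^{-2}\int_0^1\abs{\bar p_{xx}}^2\,dx$ by Cauchy--Schwarz. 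The delicate point, and what I expect to be the main obstacle, is to combine these two estimates so that the resulting constant $r$ matches the advertised rate $\tfrac{\pi^2}{4}$ under the sole assumption $c\ge\tfrac{\pi^2}{8}$: a direct split of $\int\abs{\bar p_{xx}}^2$ between the two bounds only produces a constant that degrades for moderate $c$, so I anticipate running the argument on the combined functional $V_2+\beta W_2$ (with $W_2$ from \eqref{Wlyapunov_layer} and $\beta$ of order $\pi^2$), using the negative terms $-\int_0^1\abs{\bar p_x}^2\,dx-c\abs{\bar p(0,\tau)}^2$ in $\tfrac{d}{d\tau}W_2$ to absorb the leftover boundary and low-order contributions. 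Carefully tracking the constants through this combination is the only genuinely technical step.

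Once $\tfrac{d}{d\tau}V_2\le-\tfrac{\pi^2}{4}V_2$ is in hand, Gronwall's lemma gives $V_2(\tau)\le e^{-\frac{\pi^2}{4}\tau}V_2(0)$. To pass to the $H^1$-norm I would record that $V_2$ is equivalent to $\norm{\bar p}_{H^1(0,1)}^2$: the upper bound follows from the continuity of the trace $\bar p\mapsto\bar p(0)$, and the lower bound from the elementary estimate $\int_0^1\abs{\bar p}^2\,dx\le 2\abs{\bar p(0)}^2+2\int_0^1\abs{\bar p_x}^2\,dx$, which together pin down the constant $K=K(c)$ in the stated inequality. Finally, since $D(A)$ is dense in $H^1(0,1)$ and the estimate only involves the $H^1$-norm of the data, it extends to all $\bar p_0\in H^1(0,1)$ by continuity; the same density argument, together with the uniform $H^1$ a priori bound just obtained, yields the improved regularity $\bar p\in C([0,\infty);H^1(0,1))$.
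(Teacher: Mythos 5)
Your overall strategy coincides with the paper's: differentiate $V_2$ along strong solutions to obtain the dissipation identity $\frac{d}{d\tau}V_2(\bar p)=-\int_0^1\abs{\bar p_{xx}}^2dx$, bound $\int_0^1\abs{\bar p_{xx}}^2dx$ below by a multiple of $V_2$, apply Gronwall, use the equivalence of $V_2$ with the squared $H^1$-norm, and conclude by density. Your integration by parts, the Wirtinger bound $\int_0^1\abs{\bar p_x}^2dx\leq\frac{4}{\pi^2}\int_0^1\abs{\bar p_{xx}}^2dx$ (valid because $\bar p_x(1,\tau)=0$), and the trace bound $c^2\abs{\bar p(0,\tau)}^2=\abs{\bar p_x(0,\tau)}^2\leq\int_0^1\abs{\bar p_{xx}}^2dx$ are all correct, and together they reproduce exactly what the paper does: the paper splits $\int_0^1\abs{\bar p_{xx}}^2dx$ between these two bounds and gets $\frac{d}{d\tau}V_2\leq-\frac14\min\{4c,\pi^2\}V_2$. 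You are also right, and sharper than the paper here, that this split delivers the advertised rate $\pi^2/4$ only when $c\geq\frac{\pi^2}{4}$.

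However, the step you flag as ``the only genuinely technical step'' --- recovering the rate $\pi^2/4$ for all $c\geq\frac{\pi^2}{8}$ via the combined functional $V_2+\beta W_2$ --- is a step that would fail, because the stated estimate is in fact false for $c$ near $\frac{\pi^2}{8}$. Consider the first eigenpair of the generator: $\phi_1''=-\lambda_1\phi_1$, $\phi_1'(0)=c\phi_1(0)$, $\phi_1'(1)=0$, where $\lambda_1=\omega_1^2$ and $\omega_1\in(0,\frac{\pi}{2})$ solves $\omega_1\tan\omega_1=c$. Since $\tan\omega>\omega$ on $(0,\frac{\pi}{2})$, one has $\lambda_1=\omega_1^2<\omega_1\tan\omega_1=c$. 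Taking $\bar p_0=\phi_1$ (smooth, compatible, in particular in $H^1(0,1)$), the solution of \eqref{eq:boundary_layer} is $\bar p(x,\tau)=e^{-\lambda_1\tau}\phi_1(x)$, so $\norm{\bar p(\cdot,\tau)}^2_{H^1(0,1)}=e^{-2\lambda_1\tau}\norm{\phi_1}^2_{H^1(0,1)}$ decays at the exact rate $2\lambda_1<2c$; for $c=\frac{\pi^2}{8}$ this is strictly below $\frac{\pi^2}{4}$, so no constant $K$ can make the claimed inequality hold for all $\tau\geq0$. Consequently no choice of $\beta$ (nor any other Lyapunov combination) can yield $\frac{d}{d\tau}(V_2+\beta W_2)\leq-\frac{\pi^2}{4}(V_2+\beta W_2)$ in that regime. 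A related warning: if you try to close the absorption using the paper's Lemma \ref{wirtinger} with the $\abs{w(0)}^2$ term, the constants may appear to work out, but that inequality is itself invalid in general (test $w=1+\epsilon x$ for small $\epsilon>0$); a correct Poincar\'e-type bound needs larger constants, such as $2\abs{w(0)}^2+4\int_0^1\abs{w_x}^2dx$. The honest conclusion is the one your direct split already gives: the decay rate is $\frac14\min\{4c,\pi^2\}$, which equals $\frac{\pi^2}{4}$ precisely when $c\geq\frac{\pi^2}{4}$ --- and that is the regime in which Lemma \ref{decay_h1} is actually used later, since both Corollary \ref{bl_coro} and hypothesis (1) of Theorem \ref{thikonov} assume $c\geq\frac{\pi^2}{4}$.
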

\begin{proof}
Supposing $c>0$, taking the time derivative of \eqref{Vlyapunov_layer} along the strong solution $\bar p$ of \eqref{eq:boundary_layer}, integrating by parts and using boundary conditions we get
\begin{equation*}
     \frac{d}{dt}V_2(\bar p) = \int_0^1 \bar p_{x} \bar p_{xt} dx + c\bar p(0,t)p_t(0,t)= -\int_0^1 \abs{\bar p_{xx}} dx.
\end{equation*}
Using Poincaré's inequality we arrive at $\frac{d}{dt}V_2(\bar p) \leq -\tfrac14\min\{4c,\pi^2\} V_2(\bar p).$ Thus, for each $c>0$ we get the exponential stability in the $H^1(0,1)-$norm.
\end{proof}
Following \cite[Section III]{cerpa2019singular} we are going to treat an extra boundary term that depends on the trace $\abs{\bar p(0,\cdot)}^2$. To conclude this subsection, we state the following result where the trace term $\abs{\bar p(0,\cdot)}^2$ is controlled. This enhance the usefulness of considering both exponential stability results for the boundary-layer system \eqref{eq:boundary_layer} as both are going to play a key role in the proof of Theorem \ref{thikonov}.
\begin{corollary}
\label{bl_coro}
For each $c\geq \frac{\pi^2}{4}$, the unique mild solution $\bar p\in C([0,\infty);L^2(0,1))$ of \eqref{eq:boundary_layer} satisfies
$$ \abs{\bar p(0,\tau)}^2 \leq c^{-1} e^{-\frac{\pi^2}{4}\tau} \norm{\bar p_0}^2_{H^1(0,1)}, \hspace{.3cm} \forall \tau \geq 0, $$
provided that $\bar p_0\in H^1(0,1).$ 
\end{corollary}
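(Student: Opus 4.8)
The plan is to extract the boundary trace directly from the Lyapunov functional $V_2$ defined in \eqref{Vlyapunov_layer}, since $V_2$ already carries the term $\tfrac{c}{2}\abs{\bar p(0,\tau)}^2$. By the very definition of $V_2$ one has the pointwise inequality $\tfrac{c}{2}\abs{\bar p(0,\tau)}^2 \leq V_2(\bar p(\cdot,\tau))$, and hence $\abs{\bar p(0,\tau)}^2 \leq \tfrac{2}{c} V_2(\bar p(\cdot,\tau))$ for every $\tau\geq 0$. This is precisely what produces the $c^{-1}$ prefactor in the statement, and it is the reason for working with $V_2$ rather than with a plain Sobolev trace estimate applied to $\bar p(\cdot,\tau)$.

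First I would establish the result for strong solutions, i.e.\ for $\bar p_0\in H^2(0,1)$ satisfying $(\bar p_0)_x(0)=c\bar p_0(0)$ and $(\bar p_0)_x(1)=0$, for which $V_2(\bar p(\cdot,\tau))$ and the trace $\bar p(0,\tau)$ are well defined for all $\tau\geq 0$; the improved regularity $\bar p\in C([0,\infty);H^1(0,1))$ furnished by Lemma \ref{decay_h1} then makes the final estimate meaningful for $\bar p_0\in H^1(0,1)$. Reusing the dissipation computation in the proof of Lemma \ref{decay_h1}, which gives $\tfrac{d}{d\tau}V_2(\bar p)\leq -\tfrac14\min\{4c,\pi^2\}V_2(\bar p)$, and noting that $c\geq \tfrac{\pi^2}{4}$ forces $\min\{4c,\pi^2\}=\pi^2$, Gronwall's lemma yields $V_2(\bar p(\cdot,\tau))\leq e^{-\frac{\pi^2}{4}\tau}V_2(\bar p_0)$. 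Combined with the trace bound above, this gives
\[
\abs{\bar p(0,\tau)}^2 \leq \frac{2}{c}\, e^{-\frac{\pi^2}{4}\tau}\, V_2(\bar p_0), \qquad \tau\geq 0.
\]

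It then remains to estimate the initial value $V_2(\bar p_0)=\tfrac12\norm{(\bar p_0)_x}_{L^2(0,1)}^2+\tfrac{c}{2}\abs{\bar p_0(0)}^2$ by the $H^1(0,1)$-norm of the data, which is the technical crux and the one step where the precise constant must be tracked carefully. Here I would invoke the trace--Poincaré inequality (Lemma \ref{trace_poincare}) to control $\abs{\bar p_0(0)}^2$ by $\norm{\bar p_0}_{H^1(0,1)}^2$, so that $\tfrac{2}{c}V_2(\bar p_0)\leq c^{-1}\norm{\bar p_0}_{H^1(0,1)}^2$, turning the previous display into the claimed bound. Finally, the passage from strong to mild solutions with $\bar p_0\in H^1(0,1)$ proceeds by the same density argument used in the preceding lemmas, the set of admissible strong initial data being dense in $H^1(0,1)$ and every quantity involved depending continuously on $\bar p_0\in H^1(0,1)$ by Lemma \ref{decay_h1}. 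The exponential rate $\tfrac{\pi^2}{4}$ and the $c^{-1}$ scaling are otherwise forced by the structure of $V_2$ and its dissipation identity, so I expect the only genuinely delicate point to be the constant bookkeeping in the trace estimate of $V_2(\bar p_0)$.
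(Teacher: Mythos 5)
Your route is exactly the one the paper intends (its proof is a one-liner pointing back to Lemma \ref{decay_h1}): read the trace term off the Lyapunov functional $V_2$, use the dissipation inequality $\frac{d}{d\tau}V_2(\bar p)\leq -\tfrac14\min\{4c,\pi^2\}V_2(\bar p)$ established in the proof of Lemma \ref{decay_h1} together with Gronwall, and pass from strong to mild solutions with $\bar p_0\in H^1(0,1)$ by density and the improved regularity. Your first three steps, giving
\[
\abs{\bar p(0,\tau)}^2 \leq \tfrac{2}{c}\,V_2(\bar p(\cdot,\tau)) \leq \tfrac{2}{c}\,e^{-\frac{\pi^2}{4}\tau}\,V_2(\bar p_0),
\]
are correct, and the strong-to-mild passage at the end is unproblematic.

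The final step, however, is a genuine gap, and it sits precisely at the point you flagged as the crux. The claimed bound $\tfrac{2}{c}V_2(\bar p_0)\leq c^{-1}\norm{\bar p_0}^2_{H^1(0,1)}$ is equivalent to $c\abs{\bar p_0(0)}^2\leq \norm{\bar p_0}^2_{L^2(0,1)}$, which is false: for $\bar p_0\equiv 1$ it reads $c\leq 1$, while the hypothesis forces $c\geq \pi^2/4>1$. Moreover, Lemma \ref{trace_poincare} cannot be invoked here: it concerns functions in $H^1_L(0,1)$, i.e.\ vanishing at $x=0$, and it bounds the trace at $x=1$; it says nothing about $\abs{\bar p_0(0)}$ for a general $\bar p_0\in H^1(0,1)$. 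No amount of constant bookkeeping can close this gap, because the prefactor $c^{-1}$ in the corollary as stated is itself unattainable: at $\tau=0$ the asserted estimate reads $\abs{\bar p_0(0)}^2\leq c^{-1}\norm{\bar p_0}^2_{H^1(0,1)}$, and the same example $\bar p_0\equiv 1$ violates it. What your argument (and the paper's) actually delivers is
\[
\abs{\bar p(0,\tau)}^2 \leq \tfrac{2}{c}\,e^{-\frac{\pi^2}{4}\tau}\,V_2(\bar p_0) \leq C\, e^{-\frac{\pi^2}{4}\tau}\norm{\bar p_0}^2_{H^1(0,1)},
\]
with $C$ of order one rather than $c^{-1}$: using the elementary trace bound $\abs{w(0)}^2\leq 2\norm{w}^2_{L^2(0,1)}+\norm{w_x}^2_{L^2(0,1)}$ one gets $\tfrac{2}{c}V_2(\bar p_0)\leq \bigl(1+\tfrac1c\bigr)\norm{(\bar p_0)_x}^2_{L^2(0,1)}+2\norm{\bar p_0}^2_{L^2(0,1)}\leq 2\norm{\bar p_0}^2_{H^1(0,1)}$ for $c\geq 1$. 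This weaker constant is all that is needed at the only place the corollary is used, namely the Gronwall estimate \eqref{gronwall_thiko} in the proof of Theorem \ref{thikonov}, where only the exponential factor and an $H^1$ bound on $\bar p_0$ matter; so the correct repair is to weaken the prefactor in the statement, not to hunt for a sharper trace inequality.
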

\begin{proof}
This Corollary follows directly of the proof of Lemma \ref{decay_h1} and using the improved regularity $\bar p\in C([0,\infty);H^1(0,1))$ whenever $\bar p_0\in H^1(0,1)$
\end{proof}
\subsection{Full system}
\label{FullStab}
In this section we are going to prove the exponential decay of the solutions of \eqref{system} in the $H^1(0,1)\times L^2(0,1)\times L^2(0,1)-$norm and $H^1(0,1)\times L^2(0,1)\times H^1(0,1)-$norm, by using the Lyapunov functionals designed earlier this section. For this purpose, let us consider $\tilde p(t,x) = p(t,x) - \left(\frac{d}{c} + xd\right)u(1,t)$, which is defined for $[0,1]\times \mathbb [0,\infty)$. Then, we have that $$\tilde p_t(t,x) = p_t(t,x) - \left(\frac{d}{c} + xd\right)u_t(1,t), \hspace{.3cm} (x,t)\in  (0,1)\times \mathbb (0,\infty).$$  
One can also prove that $\tilde p_{xx}(t,x) = p_{xx}(t,x)$, for all $(x,t)\in  (0,1)\times \mathbb (0,\infty)$, and compute the boundary conditions. Therefore, $(u,\tilde p)$ satisfies the following coupled PDE system

\begin{equation}
\label{eq:full-system}
\begin{cases}
   \varepsilon \tilde p_t =  \tilde p_{xx} + \varepsilon \left(\frac{d}{c} + xd\right)u_t(1,t),& (x,t)\in (0,1)\times(0,\infty) \\
     \tilde p_x(0,t) = c \tilde p(0,t),& t\in (0,\infty),\\
     \tilde p_x(1,t) =0,& t\in (0,\infty)\\
     \tilde p(x,0) = \tilde p_0(x),& x\in (0,1),\\
     u_{tt}=u_{xx},&(x,t)\in (0,1)\times(0,\infty),\\u(0,t)=0,& t\in (0,\infty), \\ u_x(1,t)=-au_t(1,t)+\tfrac{bd}{c}u(1,t)+b\tilde{p}(0,t),& t\in (0,\infty),\\ u(x,0) = u_0(x),\hspace{.1cm} u_t(x,0) = u_1(x),& x\in (0,1).
\end{cases}
\end{equation}
Now we want to analyze the exponential stability of \eqref{eq:full-system}. For this purpose we propose the Lyapunov functionals   $V(\tilde p, u)=V_1(u)+\varepsilon V_2(\tilde p)$ and $W(\tilde p,u)=V_1(u)+ \varepsilon W_2(\tilde p)$, where $V_1,V_2,W_2$ were defined in \eqref{v1},\eqref{Vlyapunov_layer} and \eqref{Wlyapunov_layer}, respectively. Before we prove Theorem \ref{full_stab}, we state the following technical lemma
\begin{lemma}
Let $a$ satisfies $(i)$ of Theorem \ref{claim_stab}. Then, the unique solution $(u,u_t,\tilde p)$ to \eqref{eq:full-system} satisfies
\begin{equation}
\label{v1Lyapunov}
\begin{aligned}\frac{d}{dt}V_1(u,u_t) \leq -\frac{\mu}{2} V_1(u,u_t) -q(a,\mu)\abs{u_t(1,t)}^2 &+ \left(F(\mu)\left(\tfrac{bd}{c}\right)^2-\mu e^{-\mu}\right)\abs{u(1,t)}^2 + F(\mu)b^2\abs{\tilde p(0,t)}^2,\end{aligned}
\end{equation}
where $F(\mu)$ is defined in Theorem \ref{full_stab} and $q(a,\mu)$ is defined in Remark \ref{qmu}.
\end{lemma}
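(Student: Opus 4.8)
The plan is to reproduce the Lyapunov computation already carried out for the reduced system \eqref{reduced} in the proof of Proposition \ref{reduced_stab}, the only difference being that the boundary condition at $x=1$ now carries the extra forcing term $b\tilde p(0,t)$. As usual it suffices to establish \eqref{v1Lyapunov} for strong solutions $(u,u_t,\tilde p)$ of \eqref{eq:full-system}, the general case following by density.

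First I would differentiate $V_1$ along the wave part of \eqref{eq:full-system}. Introducing the Riemann invariants $P=u_t+u_x$ and $M=u_t-u_x$ and using $u_{tt}=u_{xx}$ to replace $P_t$ by $P_x$ and $M_t$ by $-M_x$, one integration by parts gives
\begin{equation*}
\frac{d}{dt}V_1(u,u_t) = -\mu V_1(u,u_t) + \left[\,e^{\mu x}P^2 - e^{-\mu x}M^2\,\right]_{x=0}^{x=1}.
\end{equation*}
The contribution at $x=0$ vanishes: since $u(0,t)=0$ forces $u_t(0,t)=0$, one has $P(0)^2=M(0)^2=\abs{u_x(0,t)}^2$, so the two terms cancel.

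Next I would insert the boundary condition $u_x(1,t)=-au_t(1,t)+g$, with $g:=\tfrac{bd}{c}u(1,t)+b\tilde p(0,t)$, into the $x=1$ term. This is exactly the expression handled in \eqref{young_eta}--\eqref{lyapunov_reduced} with $\tfrac{bd}{c}\bar u(1,t)$ replaced by $g$, so applying Young's inequality to the cross terms $u_t(1,t)\,g$ with the same choice $\delta=\tfrac12$ reproduces the coefficient $-q(a,\mu)$ in front of $\abs{u_t(1,t)}^2$ and leaves a term $(2\sinh\mu+4\cosh\mu)\,g^2$. I would then split $g^2\leq 2\bigl(\tfrac{bd}{c}\bigr)^2\abs{u(1,t)}^2+2b^2\abs{\tilde p(0,t)}^2$, which converts this into a multiple of $\abs{u(1,t)}^2$ and $\abs{\tilde p(0,t)}^2$ whose common coefficient $4\sinh\mu+8\cosh\mu$ is dominated by $F(\mu)=2\sinh\mu+10\cosh\mu$.

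Finally, to produce the favourable coefficient $-\mu e^{-\mu}$ in front of $\abs{u(1,t)}^2$, I would split the dissipation as $-\mu V_1=-\tfrac{\mu}{2}V_1-\tfrac{\mu}{2}V_1$ and bound the second half from below through the trace estimate of Lemma \ref{trace_reduced}, which controls $\abs{u(1,t)}^2$ by $V_1(u,u_t)$ (using $e^{\pm\mu x}\geq e^{-\mu}$ on $[0,1]$ together with $\abs{u(1,t)}^2\leq\int_0^1\abs{u_x}^2\,dx$); this peels off a term of the form $-\mu e^{-\mu}\abs{u(1,t)}^2$ while leaving $-\tfrac{\mu}{2}V_1$. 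Collecting the three contributions yields \eqref{v1Lyapunov}. The hard part is purely the bookkeeping of the constants: one must check that, after the crude quadratic split of $g$, the coefficient remains below $F(\mu)$ on both boundary traces and that precisely $\mu e^{-\mu}$ is extracted from the dissipation term; every other step is a direct transcription of the reduced-system estimate with the inhomogeneity $b\tilde p(0,t)$ carried along.
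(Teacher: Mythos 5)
Your proof is correct and follows essentially the same route as the paper: differentiate $V_1$, integrate by parts, insert the boundary condition at $x=1$, absorb the cross terms by Young's inequality, and extract the $-\mu e^{-\mu}\abs{u(1,t)}^2$ term by splitting off half of the dissipation and applying Lemma \ref{trace_reduced}. The only (harmless) difference is bookkeeping: the paper expands the full square $\bigl(\tfrac{bd}{c}u(1,t)+b\tilde p(0,t)\bigr)^2$ and takes $\delta=\tfrac14$ to land exactly on $F(\mu)$, whereas your choice $\delta=\tfrac12$ followed by the quadratic splitting of $g^2$ produces the coefficient $4\sinh\mu+8\cosh\mu$, which is dominated by $F(\mu)=2\sinh\mu+10\cosh\mu$ since $\sinh\mu\leq\cosh\mu$, so the stated inequality still follows.
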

\begin{proof}
Taking the time derivative along the strong solutions of \eqref{eq:full-system} we have that
\begin{multline*}
\frac{d}{dt} V_1(u,u_t) = -\mu V_1(u,u_t)  + \left((1-a)^2e^\mu - (1+a)^2 e^{-\mu}\right)\abs{u_t(1,t)}^2 + 2\sinh{\mu}\left(\tfrac{bd}{c}\right)^2\abs{u(t,1)}^2,\\ + 2\sinh(\mu)\abs{b\tilde{p}(0,t)}^2
+e^\mu 2\left(\tfrac{db}{c}\right)u(1,t)b\tilde{p}(0,t)  +e^\mu 2(1-a)u_t(1,t)b\tilde{p}(0,t) +e^\mu 2(1-a)u_t(1,t)\left(\tfrac{db}{c}\right)u(1,t),\\+e^{-\mu} 2\left(\tfrac{db}{c}\right)u(1,t)b\tilde{p}(0,t)  -e^{-\mu} 2(1+a)u_t(1,t)b\tilde{p}(0,t) -e^{-\mu} 2(1+a)u_t(1,t)\left(\tfrac{db}{c}\right)u(1,t),    
\end{multline*}
by using Young and Cauchy-Schwarz inequalities, we get
\begin{multline*}
\frac{d}{dt} V_1(u,u_t) = -\mu V_1(u,u_t) + (2\sinh{(\mu)}+ 2\cosh{(\mu)} +\tfrac{2}{\delta}\cosh{(\mu)} ) b^2\abs{\tilde p(0,t)}^2,\\+ 2\delta \left(e^{\mu}(1-a)^2 +e^{-\mu}(1+a)^2\right)+ \abs{u_t(1,t)}^2 (2\sinh{(\mu)}+ 2\cosh{(\mu)} +\tfrac{2}{\delta}\cosh{(\mu)} ) \left(\frac{bd}{c}\right)^2\abs{u(1,t)}^2, \\ +\left((1-a)^2e^\mu - (1+a)^2 e^{-\mu}\right)\abs{u_t(1,t)}^2.
\end{multline*}
Taking $\delta=\frac{1}{4}$, and using Lemma \ref{trace_reduced} for $\abs{u(1,t)}^2$ we get the desired result. Note that Remark \ref{a-eta-remark} also applies in this case.
\end{proof}
Now we are ready to prove Theorem \ref{full_stab}.

\begin{proof}[\textbf{Proof of Theorem \ref{full_stab}}]  

\begin{itemize}
\item[(i)] For any strong solution of \eqref{eq:full-system} and taking the derivative of \eqref{Wlyapunov_layer}, we have the following
\begin{equation*}
\begin{aligned} \varepsilon \frac{d}{dt} W_2(\tilde p) = -\int_0^1 \abs{\tilde{p}_x}^2 - c \abs{\tilde{p}(0,t)}^2+ \int_0^1\varepsilon\tilde{p} \left(\frac{d}{c} +xd\right)u_{t}(1,t) dx.\end{aligned}
\end{equation*}
Using Wirtinger's and Young's inequalities we get
\begin{equation*}
\begin{aligned} \varepsilon \frac{d}{dt} W_2(\tilde p) &\leq -\frac{\pi^2}{4}\int_0^1 \abs{\tilde p}^2 dx + \left(\frac{\pi^2}{4}-c\right)\abs{\tilde p(0,t)}^2 + \frac{\varepsilon^2 d^2(1+c)^2}{2c^2\delta}\int_0^1 \abs{\tilde p}^2 dx + \frac{\delta}{2}\abs{u_t(1,t)}^2, \end{aligned}
\end{equation*}
thus
\begin{multline}
\label{w2}
\varepsilon \frac{d}{dt} W_2(\tilde p) \leq \left(\frac{\varepsilon^2 d^2(1+c)^2}{2c^2\delta}-\frac{\pi^2}{8}\right)\int_0^1 \abs{\tilde p}^2 dx + \left(\frac{\pi^2}{4}-c\right)\abs{\tilde p(0,t)}^2  dx + \frac{\delta}{2}\abs{u_t(1,t)}^2dx, \\ -\frac{\pi}{8}\int_0^1 \abs{\tilde p} dx.
\end{multline}
Using inequalities \eqref{v1Lyapunov} and \eqref{w2} we get
\begin{multline*}
\frac{d}{dt} W(u,u_t,\tilde p) \leq -\frac{\mu}{2} V_1(u,u_t) -\frac{\pi^2}{4\varepsilon} \varepsilon W_2(\tilde p) + \left(\frac{\delta}{2} -q(a,\mu)\right) \abs{u_t(1,t)}^2,\\+\left(\frac{\pi^2}{4} + F(\mu)b^2 -c\right) \abs{\tilde p(0,t)}^2 + \left(F(\mu)\left(\tfrac{bd}{c}\right)^2-\mu e^{-\mu}\right)\abs{u(1,t)}^2,\\ + \left(\frac{\varepsilon^2 d^2(1+c)^2}{2c^2\delta}-\frac{\pi^2}{8}\right)\int_0^1\abs{\tilde p}^2 dx.    
\end{multline*}
Defining $\varepsilon_1= \frac{\sqrt{q(a,\mu)}\pi c}{ 2\abs{d} (1+c)}$ and $\varepsilon_2 = \frac{\pi^2}{2\mu}$. For $\mu <\mu^\ast$, where $\mu^\ast$ is given in Theorem \eqref{full_stab}, we have that $q(a,\mu)>0$. Taking  $\delta=2q(a,\mu)$, $\varepsilon\in (0,\varepsilon_1]$, and $a,b,c,d$ satisfying $(i)-(v)$ of Theorem \eqref{full_stab} we arrive at
\begin{equation*}
\frac{d}{dt} W(u,u_t,\tilde p) \leq -\min\left\{\frac{\mu}{2},\frac{\pi^2}{4\varepsilon}\right\} W(u,u_t,\tilde p) \leq -\frac{\mu}{2}W(u,u_t,\tilde p),
\end{equation*}
where the last inequality holds for $\varepsilon\in (0,\varepsilon_2)$. Finally, taking $\varepsilon^\ast_1 = \min\{\varepsilon_1,\varepsilon_2\}$ and using some suitable density argument, we get the desired conclusion.
\item[(ii)] For any strong solution of \eqref{eq:full-system} and taking the derivative of \eqref{Vlyapunov_layer} we have that
\begin{equation*}
 \frac{d}{dt} \left(\varepsilon V_2(\tilde p)\right) =- \int_0^1 \abs{\tilde{p}_{xx}}^2 dx+\int_0^1 \varepsilon \left(\frac{d}{c}+dx\right)u_{t}(1,t)\tilde{p}_{xx}dx, 
\end{equation*}
using Young's inequality we have that
\begin{multline}
\label{v2}
\varepsilon \dot{V}_2(\tilde{p}) \leq -\int_0^1\abs{\tilde p_{xx}}^2dx + \int_0^1\frac{\varepsilon(1+c)\abs{d}}{c}\abs{\tilde p_{xx}}\abs{u_t(1,t)}dx \leq \left(\frac{\varepsilon^2(1+c)^2d^2}{2\delta c^2}-\frac{1}{3}\right)\int_0^1\abs{\tilde p_{xx}}^2dx,\\ - \frac{2}{3}\int_0^1\abs{\tilde p_{xx}}^2dx+ \frac{\delta}{2}\abs{u_t(1,t)}^2.
\end{multline}
Combining \eqref{v1Lyapunov} and \eqref{v2} we see that
\begin{multline}
\label{combined}
\frac{d}{dt} V(u,u_t,\tilde p) \leq -\frac{\mu}{2} V_1(u,u_t)-\frac{2}{3}\int_0^1 \abs{\tilde p_{xx}}^2 + \left(\frac{\delta}{2}-q(a,\mu)\right)\abs{u_t(1,t)}^2 + F(\mu)b^2 \abs{\tilde p(0,t)}^2,\\+ \left(F(\mu)\left(\tfrac{bd}{c}\right)^2-\mu e^{-\mu}\right)\abs{u(1,t)}^2+ \left(\frac{\varepsilon^2(1+c)^2d^2}{2\delta c^2}-\frac{1}{3}\right)\int_0^1 \abs{\tilde p_{xx}}^2.
\end{multline}
Applying Lemma \ref{trace_poincare} for $p_x(0,t)$, and using boundary conditions we have $$c^2 \abs{\tilde p(0,t)}^2 \leq \int_0^1 \abs{\tilde p_{xx}}^2dx.$$
Since $c\geq \tfrac{\pi^2}{8}>1$, we get the following inequality
\begin{equation}
\label{h2_trace}
-c\int_0^1 \abs{\tilde p_{xx}}^2 dx \leq -\int_0^1 \abs{\tilde p_{xx}}^2 \leq -c^2 \abs{\tilde p(0,t)}^2.
\end{equation}
By using \eqref{h2_trace} in \eqref{combined} we get
\begin{multline*}
\frac{d}{dt} V(u,u_t,\tilde p) \leq -\frac{\mu}{2} V_1(u,u_t)-\frac{1}{3}\int_0^1 \abs{\tilde p_{xx}}^2 + \left(\frac{\delta}{2}-q(a,\mu)\right)\abs{u_t(1,t)}^2,\\ + \left(F(\mu)b^2-\frac{c}{3}\right) \abs{\tilde p(0,t)}^2 + \left(F(\mu)\left(\tfrac{bd}{c}\right)^2-\mu e^{-\mu}\right)\abs{u(1,t)}^2+ \left(\frac{\varepsilon^2(1+c)^2d^2}{2\delta c^2}-\frac{1}{3}\right)\int_0^1 \abs{\tilde p_{xx}}^2.
\end{multline*}
We have the following estimate thanks to Poincaré's Inequality $-\int_0^1 \abs{\tilde p_{xx}}^2 dx \leq - \frac{2c\pi^2}{4c+\pi^2} V_2(\tilde p)$, so the last inequality is equivalent to
\begin{multline*}
\frac{d}{dt} V(u,u_t,\tilde p) \leq -\frac{\mu}{2} V_1(u,u_t)-\frac{2c\pi^2}{3(4c+\pi^2)}\frac{1}{\varepsilon} \left(\varepsilon V_2(\tilde p)\right)+ \left(\frac{\delta}{2}-q(a,\mu)\right)\abs{u_t(1,t)}^2,\\ + \left(F(\mu)b^2-\frac{c}{3}\right) \abs{\tilde p(0,t)}^2+ \left(F(\mu)\left(\tfrac{bd}{c}\right)^2-\mu e^{-\mu}\right)\abs{u(1,t)}^2,\\+ \left(\frac{\varepsilon^2(1+c)^2d^2}{2\delta c^2}-\frac{1}{3}\right)\int_0^1 \abs{\tilde p_{xx}}^2.
\end{multline*}
Defining $\varepsilon_1= \tfrac{2c\sqrt{q(a,\mu)}}{(1+c)\abs{d}}$, and $\varepsilon_2=\frac{4c\pi^2}{\mu(4c+\pi^2)}$. For $\mu <\mu^\ast$, where $\mu^\ast$ is given in Theorem \eqref{full_stab}, we have that $q(a,\mu)>0$.. Taking $\delta=2q(a,\mu)$, and the parameters $a,b,d,c,\mu$ as in the hypothesis we have that for $\varepsilon\leq \varepsilon_1$
\begin{equation*}
\frac{d}{dt} V(u,u_t,\tilde p) \leq -\frac{\mu}{2} V_1(u,u_t)- \frac{2c\pi^2}{4c+\pi^2}\frac{1}{\varepsilon}  \left(\varepsilon V_2(\tilde p)\right) \leq -\min\{\tfrac{\mu}{2}, \tfrac{2c\pi^2}{4c+\pi^2}\tfrac{1}{\varepsilon}\} V(u,u_t,\tilde p),
\end{equation*}
where the last inequality holds for $\varepsilon\in (0,\varepsilon_2)$. Finally, taking $\varepsilon^\ast_2 = \min\{\varepsilon_1,\varepsilon_2\}$ we get the desired result for strong solutions. On the other hand,  following Remark \ref{char} it follows that $D(A_\varepsilon)$ is dense in $H^1(0,1)\times L^2(0,1)\times H^1(0,1)$. Therefore, we can extend the semigruoup associted to \eqref{system} by continuity to the space $H^1(0,1)\times L^2(0,1)\times H^1(0,1)$. In consequene, this unique mild solution is also exponentially stable.
\end{itemize} \end{proof}

\section{Tikhonov Theorem}
\label{Thiko}
At the moment we have shown that, the exponential stability of the approximated subsystems given by the SPM, we deduce the stability of \eqref{system} in the $H^1(0,1)\times L^2(0,1)\times L^2(0,1)-$norm and the $H^1(0,1)\times L^2(0,1)\times H^1(0,1)-$norm, for $\varepsilon>0$ small enough. In this section, we are going to prove Theorem \ref{thikonov}, which uses both subsystems to give an approximation of the full system. This Tikhonov result is for $H^1(0,1)\times L^2(0,1)\times L^2(0,1)-$norm stability analysis.  The idea is to show how the solution of \eqref{system} goes to zero while explaining how it does when the parameter $\varepsilon$ is small enough. Being precise, we define the so called error system, which is the result of approximating each variables $(u,p)$ of \eqref{system} by the solutions $\bar u$,$\bar p$ of subsystems \eqref{eq:reduced},\eqref{boundary_layer}, respectively and the quasi steady-state given in Section \ref{Stab}.

Let us introduce the following variables
\begin{equation*}
\alpha(x,t)=u(x,t)-\bar{u}(x,t), \hspace{.3cm} \forall (x,t)\in [0,1]\times [0,\infty),
\end{equation*}
and
\begin{equation*}
\beta(x,t)= p(x,t)-\left(\tfrac{d}{c}+xd\right)\bar u(1,t)-\bar{p}(x,\tfrac{t}{\varepsilon}), \hspace{.3cm} \forall (t,x)\in [0,1]\times [0,\infty),
\end{equation*}
where $(u,p),\bar u, \bar p $ are the strong solutions to \eqref{system}, \eqref{eq:reduced} and \eqref{boundary_layer}, respectively.

Computing successively for $\alpha,\beta$ we have
\begin{equation*}
\begin{aligned}
&\alpha_{t}(x,t) = u_{t}(x,t)-\bar u_{t}(x,t),\hspace{.3cm}
\alpha_{tt}(x,t) = u_{tt}(x,t)-\bar u_{tt}(x,t), \hspace{.3cm} \forall (t,x)\in (0,1)\times (0,\infty),\\&
\alpha_{x}(x,t) = u_{x}(x,t)-\bar u_{x}(x,t), \hspace{.3cm}
\alpha_{xx}(x,t)= u_{xx}(x,t)-\bar u_{xx}(x,t), \hspace{.3cm} \forall (t,x)\in (0,1)\times (0,\infty),
\end{aligned}
\end{equation*}
and
\begin{equation*}
\begin{aligned}
&\beta_{t}(x,t) = p_{t}(x,t)-\left(\tfrac{d}{c}+xd\right)\bar u_{t}(1,t)-\frac{1}{\varepsilon} \bar p_\tau (x,\tfrac{t}{\varepsilon}), \hspace{.3cm} \forall (t,x)\in (0,1)\times (0,\infty),\\&
\beta_{x}(x,t) = p_{x}(x,t)- d\bar u(1,t)-\bar p_x(x,\tfrac{t}{\varepsilon}), \hspace{.3cm} \forall (t,x)\in (0,1)\times (0,\infty),\\&
\beta_{xx}(x,t) = p_{xx}(x,t)-\bar p_{xx}(x,t),\hspace{.3cm} \forall (t,x)\in (0,1)\times (0,\infty).
\end{aligned}
\end{equation*}
Thus, $\alpha,\beta$ satisfies the following error system
\begin{equation*}
\begin{cases}
\alpha_{tt} = \alpha_{xx}, & (x,t)\in (0,1)\times (0,\infty),  \\
\varepsilon\beta_{t}  = \beta_{xx}-\varepsilon\left(\frac{d}{c}+xd\right)\bar{u}_t(1,t), & (x,t)\in (0,1)\times (0,\infty), \\
\end{cases}
\end{equation*}
with boundary conditions
\begin{equation*}
\begin{cases}
\alpha(0,t) = 0, & t\in (0,\infty), \\
\alpha_x(1,t) = - a \alpha_t(1,t) +b\beta(0,t)+\bar p(\tfrac{t}{\varepsilon},0), & t\in (0,\infty), \\
\beta_x(0,t)  = c\beta(0,t), & t\in (0,\infty), \\
\beta_x(1,t) = d\alpha(1,t), & t\in (0,\infty),
\end{cases}
\end{equation*}
and initial condition
\begin{equation*}
\begin{cases}
\alpha (x,0) = u_0(x)-\bar u_0(x), & x\in (0,1),\\ \alpha_t(x,0) = u_1(x)-\bar u_1(x), & x\in (0,1), \\
\beta (x,0) = p_0(x)-\bar p_0(x) - \left(\tfrac{d}{c}+xd\right)\bar u_0(1), & x\in (0,1).
\end{cases}
\end{equation*}
We define the following Lyapunov functional $\tilde W(\alpha,\alpha_t,\beta)= V_1(\alpha,\alpha_t)+\varepsilon W_2(\beta)$.

We start with the two following lemmas.
\begin{proposition}
\label{equivalence}
The Lyapunov functional $\tilde W(\alpha,\beta)$ satisfies
\begin{equation*}
 O(\varepsilon) \norm{\beta}^2_{L^2(0,1)} \leq \tilde W(\alpha,\alpha_t,\beta), \hspace{.5cm} O(1) \left( \norm{\alpha}^2_{H^1(0,1)}+\norm{\alpha_t}^2_{L^2(0,1)}\right)\leq  \tilde W(\alpha,\alpha_t,\beta),
\end{equation*}
and
\begin{equation*}
\tilde W(\alpha,\alpha_t,\beta) \leq O(1) \left( \norm{\alpha}^2_{H^1(0,1)}+\norm{\alpha_t}^2_{L^2(0,1)}+ \norm{\beta}^2_{L^2(0,1)}\right).
\end{equation*}
\end{proposition}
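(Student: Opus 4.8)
The plan is to reduce all three estimates to the elementary equivalence between the weighted quadratic form $V_1$ and the standard product norm on $H^1_L(0,1)\times L^2(0,1)$, combined with the nonnegativity of the two contributions making up $\tilde W$. Recall that $\tilde W(\alpha,\alpha_t,\beta)=V_1(\alpha,\alpha_t)+\varepsilon W_2(\beta)$, where $W_2(\beta)=\tfrac12\norm{\beta}_{L^2(0,1)}^2\geq 0$ and $V_1(\alpha,\alpha_t)\geq 0$ by construction.

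First I would establish pointwise two-sided bounds for the integrand of $V_1$. Writing $a=\alpha_x$ and $b=\alpha_t$, a direct expansion gives
\[
e^{\mu x}(b+a)^2+e^{-\mu x}(b-a)^2 = 2\cosh(\mu x)(a^2+b^2)+4ab\sinh(\mu x).
\]
Using $2\abs{ab}\leq a^2+b^2$ together with $\cosh(\mu x)\pm\sinh(\mu x)=e^{\pm\mu x}$ and $x\in[0,1]$, this integrand is squeezed between $2e^{-\mu}(a^2+b^2)$ and $2e^{\mu}(a^2+b^2)$. Integrating over $(0,1)$ yields
\[
e^{-\mu}\big(\norm{\alpha_x}_{L^2(0,1)}^2+\norm{\alpha_t}_{L^2(0,1)}^2\big)\leq V_1(\alpha,\alpha_t)\leq e^{\mu}\big(\norm{\alpha_x}_{L^2(0,1)}^2+\norm{\alpha_t}_{L^2(0,1)}^2\big).
\]
Next I would invoke Poincaré's inequality on $H^1_L(0,1)$ (available since $\alpha(0)=0$) to pass between $\norm{\alpha_x}_{L^2(0,1)}^2$ and the full $\norm{\alpha}_{H^1(0,1)}^2$ up to a fixed constant depending only on the Poincaré constant. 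This upgrades the previous display to $c_0\big(\norm{\alpha}_{H^1(0,1)}^2+\norm{\alpha_t}_{L^2(0,1)}^2\big)\leq V_1\leq C_0\big(\norm{\alpha}_{H^1(0,1)}^2+\norm{\alpha_t}_{L^2(0,1)}^2\big)$ with $c_0,C_0>0$ depending only on $\mu$.

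Finally I would assemble the three claimed inequalities from these facts. Since $V_1\geq 0$, we get $\tilde W\geq \varepsilon W_2=\tfrac{\varepsilon}{2}\norm{\beta}_{L^2(0,1)}^2$, which is the first estimate with constant $O(\varepsilon)$. Since $\varepsilon W_2\geq 0$, we get $\tilde W\geq V_1\geq c_0\big(\norm{\alpha}_{H^1(0,1)}^2+\norm{\alpha_t}_{L^2(0,1)}^2\big)$, the second estimate with $O(1)$ constant. For the upper bound, adding the two nonnegative pieces gives $\tilde W\leq C_0\big(\norm{\alpha}_{H^1(0,1)}^2+\norm{\alpha_t}_{L^2(0,1)}^2\big)+\tfrac{\varepsilon}{2}\norm{\beta}_{L^2(0,1)}^2$, and since $\varepsilon$ ranges over a bounded interval the coefficient $\tfrac{\varepsilon}{2}$ is itself $O(1)$, yielding the third estimate. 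There is no genuine obstacle in this argument; the only point deserving care is to verify that all constants are uniform in $\varepsilon$ as $\varepsilon\to 0$, which holds because the weighted part $V_1$ does not depend on $\varepsilon$ while the heat part carries the explicit factor $\varepsilon$, so the three $O(\cdot)$ symbols are indeed uniform.
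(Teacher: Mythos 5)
Your proposal is correct and follows exactly the route the paper has in mind: the paper's proof is a one-line appeal to the definition of $\tilde W$ and the equivalence of $V_1$ with the $H^1(0,1)\times L^2(0,1)$ norm, and you simply supply the details of that equivalence (the pointwise expansion of the integrand, the bound $2\abs{ab}\leq a^2+b^2$, and Poincar\'e's inequality on $H^1_L(0,1)$, valid since $\alpha(0,t)=0$). Nothing is missing, and the uniformity in $\varepsilon$ is handled correctly.
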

\begin{proof}
This is a straightforward result from the definition of $\tilde W$ and the equivalence between $V_1(\alpha)$ and the $H^1(0,1)-$norm of $\alpha$. 
\end{proof}
\begin{lemma}
For every $\mu>0$ and $a>0$ we have that
$$ \frac{d}{dt} V_1(\alpha,\alpha_t)= -\mu V_1(\alpha,\alpha_t) + F(\mu)\bar p(\tfrac{t}{\varepsilon},0) + F(\mu)b^2 \abs{\beta(0,t)}^2 - q(a,\mu)\abs{\alpha_t(1,t)}^2.$$
\end{lemma}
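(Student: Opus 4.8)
The plan is to reuse, essentially verbatim, the differentiation-and-integration-by-parts computation already performed for $V_1$ in the proof of Proposition~\ref{reduced_stab} and in the derivation of \eqref{v1Lyapunov}; the only genuinely new ingredient is the boundary condition that $\alpha$ satisfies at $x=1$. A point worth flagging at the outset is that here the \emph{full} coefficient $-\mu$ is retained in front of $V_1$, in contrast with \eqref{v1Lyapunov} where part of the dissipation is spent on a trace $\abs{u(1,t)}^2$: the error-system boundary condition $\alpha_x(1,t) = -a\alpha_t(1,t)+b\beta(0,t)+\bar p(\tfrac{t}{\varepsilon},0)$ contains \emph{no} $\alpha(1,t)$ term, so no $\abs{\alpha(1,t)}^2$ source is generated and the Poincaré/trace step (Lemma~\ref{trace_reduced}) is not needed.

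First I would differentiate $V_1(\alpha,\alpha_t)$ along the strong solutions of the error system and integrate by parts in $x$. Exactly as for Proposition~\ref{reduced_stab}, this produces the bulk term $-\mu V_1(\alpha,\alpha_t)$ together with the boundary contribution $e^{\mu}(\alpha_x(1,t)+\alpha_t(1,t))^2 - e^{-\mu}(\alpha_x(1,t)-\alpha_t(1,t))^2$; the contribution at $x=0$ cancels because $\alpha(0,t)=0$ forces $\alpha_t(0,t)=0$, so the two squared traces at $x=0$ coincide. Then I would substitute the boundary relation above. Writing $v:=\alpha_t(1,t)$, $P:=b\beta(0,t)$ and $Q:=\bar p(\tfrac{t}{\varepsilon},0)$, one has $\alpha_x(1,t)+\alpha_t(1,t)=(1-a)v+P+Q$ and $\alpha_x(1,t)-\alpha_t(1,t)=-(1+a)v+P+Q$; expanding the two squares gives a $v^2$ term with coefficient $e^{\mu}(1-a)^2-e^{-\mu}(1+a)^2$, the pure source terms $2\sinh(\mu)(P^2+Q^2)$, the mixed term $4\sinh(\mu)PQ$, and the crossed terms $2\left(e^{\mu}(1-a)+e^{-\mu}(1+a)\right)v(P+Q)$.

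Next I would run Young's inequality precisely as in the derivation of \eqref{v1Lyapunov}, taking the parameter $\delta=\tfrac14$: each crossed term $vP$ and $vQ$ is split so that $v^2$ absorbs $\delta\left(e^{\mu}(1-a)^2+e^{-\mu}(1+a)^2\right)$ and $P^2$ (respectively $Q^2$) absorbs $\tfrac{2}{\delta}\cosh(\mu)=8\cosh(\mu)$, while the mixed term is handled by $4\sinh(\mu)PQ\le 2\sinh(\mu)(P^2+Q^2)\le 2\cosh(\mu)(P^2+Q^2)$, using $\sinh\mu\le\cosh\mu$. Collecting terms, the coefficient of $v^2=\abs{\alpha_t(1,t)}^2$ becomes exactly $\tfrac32 e^{\mu}(1-a)^2-\tfrac12 e^{-\mu}(1+a)^2=-q(a,\mu)$, and the coefficients of $P^2=b^2\abs{\beta(0,t)}^2$ and $Q^2=\abs{\bar p(\tfrac{t}{\varepsilon},0)}^2$ both collapse to $2\sinh(\mu)+2\cosh(\mu)+8\cosh(\mu)=F(\mu)$. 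This yields the claim, which is to be read as the inequality
\[
\tfrac{d}{dt} V_1(\alpha,\alpha_t)\le -\mu V_1(\alpha,\alpha_t) + F(\mu)\abs{\bar p(\tfrac{t}{\varepsilon},0)}^2 + F(\mu)b^2\abs{\beta(0,t)}^2 - q(a,\mu)\abs{\alpha_t(1,t)}^2,
\]
in direct analogy with \eqref{v1Lyapunov} (the stated equality and the unsquared $\bar p(\tfrac{t}{\varepsilon},0)$ being a typographical slip).

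The computation is entirely elementary, so there is no real analytic obstacle; the only step requiring care is the bookkeeping that makes the $\abs{\alpha_t(1,t)}^2$ coefficient close up to \emph{precisely} $-q(a,\mu)$. This hinges on the choice $\delta=\tfrac14$ and on routing the mixed term $PQ$ through the bound $\sinh\mu\le\cosh\mu$, so that both source coefficients collapse to the single constant $F(\mu)$; a different split would still produce an inequality, but with a $\mu$-dependent constant rather than $F(\mu)$, which is exactly why it is important to follow the convention already fixed in \eqref{v1Lyapunov}.
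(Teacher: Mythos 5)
Your proposal is correct and follows essentially the same route as the paper, whose proof is just a pointer back to the computation for \eqref{v1Lyapunov}: differentiate $V_1$ along the error system, substitute the boundary condition at $x=1$, and apply Young's inequality with $\delta=\tfrac14$ so the trace coefficient closes to $-q(a,\mu)$ and the source coefficients to $F(\mu)$. You are also right to read the stated equality as an inequality and the unsquared $\bar p(\tfrac{t}{\varepsilon},0)$ as $\abs{\bar p(\tfrac{t}{\varepsilon},0)}^2$; both are typographical slips, consistent with how the lemma is used in \eqref{gronwall_thiko}.
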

\begin{proof}
It is a consequence of computing the derivative of $V_1$ and using Young's inequality as it was used in the proof of Lemma \ref{v1Lyapunov}.
\end{proof}
\begin{lemma}
For every $c>0$ and $d\in \R$ we have that
\begin{multline}
\frac{d}{dt} \varepsilon W_2(\beta) \leq  - \frac{\pi^2}{8} \int_0^1 \abs{\beta}^2dx +\frac{1}{2} \abs{\bar u_t(t,1)}^2 +\left(\frac{\varepsilon^2 d^2(c+1)^2}{2c^2}-\frac{\pi^2}{8}\right)\int_0^1\abs{\beta}^2 dx,\\ d^2\abs{\alpha(1,t)}^2+\left(\frac{\pi^2+2}{4}-c \right)\abs{\beta(0,t)}^2.
\end{multline}
\end{lemma}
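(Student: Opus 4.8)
The plan is to differentiate the Lyapunov term $\varepsilon W_2(\beta)=\tfrac{\varepsilon}{2}\norm{\beta(\cdot,t)}_{L^2(0,1)}^2$ along the error system. Using the $\beta$-equation $\varepsilon\beta_t=\beta_{xx}-\varepsilon(\tfrac{d}{c}+xd)\bar u_t(1,t)$, integrating by parts once and inserting the Robin condition $\beta_x(0,t)=c\beta(0,t)$ together with the inhomogeneous Neumann condition $\beta_x(1,t)=d\alpha(1,t)$, I expect the identity
\[
\varepsilon\frac{d}{dt}W_2(\beta)=-\int_0^1\abs{\beta_x}^2\,dx+d\,\alpha(1,t)\beta(1,t)-c\abs{\beta(0,t)}^2-\varepsilon\bar u_t(1,t)\int_0^1\Big(\tfrac{d}{c}+xd\Big)\beta\,dx .
\]
From here the task reduces to bounding the coupling boundary term and the source integral, and then trading the dissipation $-\int_0^1\abs{\beta_x}^2$ for genuine $L^2$-decay.

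First I would treat the source integral with Young's inequality in its symmetric form, so that the coefficient of $\abs{\bar u_t(1,t)}^2$ is automatically $\tfrac12$; Cauchy--Schwarz together with the pointwise bound $\sup_{x\in[0,1]}\abs{\tfrac{d}{c}+xd}=\abs{d}\tfrac{c+1}{c}$ then converts the remaining factor into $\tfrac{\varepsilon^2d^2(c+1)^2}{2c^2}\int_0^1\abs{\beta}^2\,dx$, which is exactly the two source contributions appearing in the statement. Next, the coupling term $d\,\alpha(1,t)\beta(1,t)$ is split by Young's inequality, tuning the weight so that the wave trace comes with weight $d^2$, i.e. $d\,\alpha(1,t)\beta(1,t)\le d^2\abs{\alpha(1,t)}^2+\tfrac14\abs{\beta(1,t)}^2$. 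The summand $d^2\abs{\alpha(1,t)}^2$ is deliberately left untouched: it is meant to be absorbed only later, in the proof of Theorem~\ref{thikonov}, against the negative trace term $-q(a,\mu)\abs{\alpha_t(1,t)}^2$ coming from \eqref{v1Lyapunov} and the observability of the wave trace provided by Corollary~\ref{r_coro}.

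It then remains to dominate $-\int_0^1\abs{\beta_x}^2+\tfrac14\abs{\beta(1,t)}^2-c\abs{\beta(0,t)}^2$. The strategy is to control the right-endpoint value $\abs{\beta(1,t)}^2$ by a trace inequality against $\int_0^1\abs{\beta_x}^2$ and $\abs{\beta(0,t)}^2$, and to transform the surviving dissipation through the Wirtinger-type estimate of Lemma~\ref{wirtinger} (which uses the Robin structure at $x=0$), turning $-\int_0^1\abs{\beta_x}^2$ into $-\tfrac{\pi^2}{4}\int_0^1\abs{\beta}^2+\tfrac{\pi^2}{4}\abs{\beta(0,t)}^2$, written as the two $-\tfrac{\pi^2}{8}$ contributions displayed in the statement. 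Collecting the $\abs{\beta(0,t)}^2$ terms, namely $-c$ from the integration by parts, $\tfrac{\pi^2}{4}$ from the Wirtinger step, and the $\tfrac12$ produced when trading $\abs{\beta(1,t)}^2$, yields the coefficient $\tfrac{\pi^2+2}{4}-c$; a density argument extends the bound from strong to mild solutions.

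I expect the main obstacle to be precisely this last reabsorption: the inhomogeneous flux $\beta_x(1,t)=d\alpha(1,t)$ forces a nonzero right-endpoint trace $\beta(1,t)$ into the energy balance, which is genuinely harder than in the homogeneous boundary-layer system of Lemma~\ref{decay_h1}. One must reabsorb $\abs{\beta(1,t)}^2$ without spending so much of the dissipation $-\int_0^1\abs{\beta_x}^2$ that the decay rate $-\tfrac{\pi^2}{4}$ is lost. The delicate point is therefore the simultaneous bookkeeping of the Young weights and of the Wirtinger/trace constants, so that the stated coefficients are recovered with enough slack to be made negative once $c$ is taken large (as in conditions $(1)$--$(2)$ of Theorem~\ref{thikonov}).
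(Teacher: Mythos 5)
Your derivation is, step for step, the paper's: the same exact identity after one integration by parts and substitution of the boundary conditions, the same symmetric Young estimate on the source integral (yielding $\tfrac12\abs{\bar u_t(1,t)}^2$ plus $\tfrac{\varepsilon^2 d^2(1+c)^2}{2c^2}\int_0^1\abs{\beta}^2\,dx$), the same splitting $d\,\alpha(1,t)\beta(1,t)\le d^2\abs{\alpha(1,t)}^2+\tfrac14\abs{\beta(1,t)}^2$ (the paper's choice $\delta=\tfrac12$), and the same concluding combination of Lemma \ref{a1} with Lemma \ref{wirtinger}.

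However, the concluding step does not close as you wrote it, and it is precisely the point you flagged as the main obstacle. Absorbing $\tfrac14\abs{\beta(1,t)}^2$ via Lemma \ref{a1} consumes $\tfrac12\int_0^1\abs{\beta_x}^2\,dx+\tfrac12\abs{\beta(0,t)}^2$, so the surviving dissipation is $-\tfrac12\int_0^1\abs{\beta_x}^2\,dx$; applying Lemma \ref{wirtinger} to it produces only $-\tfrac{\pi^2}{8}\int_0^1\abs{\beta}^2\,dx+\tfrac{\pi^2}{8}\abs{\beta(0,t)}^2$, not the full gain $-\tfrac{\pi^2}{4}\int_0^1\abs{\beta}^2\,dx+\tfrac{\pi^2}{4}\abs{\beta(0,t)}^2$ that your bookkeeping (and the statement) records: the dissipation is spent one and a half times. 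The discrepancy is not cosmetic. After the integration-by-parts identity, all remaining manipulations are functional inequalities valid for arbitrary $H^1$ functions and arbitrary trace values, and the claimed bound fails as such an inequality: take $\beta(x)=x$, $\alpha(1,t)=\tfrac{1}{2d}$, $\bar u_t(1,t)=0$ and $\varepsilon$ small, so the left-hand side equals $-\tfrac12$ while the right-hand side equals $-\tfrac{\pi^2}{12}+\tfrac14+O(\varepsilon^2)\approx -0.57$. What your argument actually proves is the weaker estimate with a single $-\tfrac{\pi^2}{8}\int_0^1\abs{\beta}^2\,dx$ block and with $\bigl(\tfrac{\pi^2+4}{8}-c\bigr)\abs{\beta(0,t)}^2$ in place of $\bigl(\tfrac{\pi^2+2}{4}-c\bigr)\abs{\beta(0,t)}^2$. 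To be fair, the paper's own proof makes the identical double count, so your proposal reproduces it faithfully, flaw included; and the honestly derived weaker inequality still suffices downstream: in Theorem \ref{thikonov} the heat component of $\tilde W$ then decays at rate $\tfrac{\pi^2}{8\varepsilon}$ rather than $\tfrac{\pi^2}{4\varepsilon}$, which is still dominated by taking $\varepsilon$ small, and the needed condition $\tfrac{\pi^2+4}{8}+F(\mu)b^2\le c$ is implied by condition (2) of that theorem.
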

\begin{proof}
Note that
\begin{equation*}
\begin{aligned}
\frac{d}{dt} \varepsilon W_2(\beta) & = \int_0^1 \varepsilon \beta_t \beta dx = \int_0^1 \beta \beta_{xx}+ \int_0^1 \varepsilon(\tfrac{d}{c}+xd)\bar u_t(1,t) \beta(x,t) dx, \\&=-\int_0^1 \abs{\beta_x}^2+ (\beta\beta_x)\big\vert_{x=0}^{x=1}+\int_0^1 \varepsilon(\tfrac{d}{c}+xd)\bar u_t(1,t) \beta(x,t) dx,\\&= -\int_0^1 \abs{\beta_x}^2+ d\alpha(1,t)\beta(1,t)-c\abs{\beta(0,t)}^2+\int_0^1 \varepsilon(\tfrac{d}{c}+xd)\bar u_t(1,t) \beta(x,t) dx.
\end{aligned}
\end{equation*}
Using Young's inequality and Cauchy-Schwarz properly, we get
\begin{multline}
\varepsilon \dot W_2(\beta) \leq - \int_0^1 \abs{\beta_x}^2dx -c\abs{\beta(0,t)}^2 + \frac{d^2}{2\delta}\abs{\alpha(1,t)}^2 + \frac{\delta}{2}\abs{\beta(1,t)},\\ + \frac{\varepsilon^2 d^2 (1+c)^2}{2c^2} \int_0^1 \abs{\beta}^2dx + \frac{1}{2} \abs{\bar u_t(1,t)}^2,
\end{multline}
employing Lemma \ref{wirtinger} together with Lemma \ref{a1}, and taking $\delta=\frac12$, we arrive at
\begin{multline}
\varepsilon \dot W_2(\beta) \leq - \frac{\pi^2}{4} \int_0^1 \abs{\beta}^2dx + \left(\frac{\pi^2}{4} +\frac{1}{2} -c \right) \abs{\beta(0,t)}^2 + d^2\abs{\alpha(1,t)}^2,  \\+ \frac{\varepsilon^2 d^2 (1+c)^2}{2c^2} \int_0^1 \abs{\beta}^2dx + \frac{1}{2} \abs{\bar u_t(1,t)}^2.
\end{multline}
\end{proof}
Now we are able to prove the Tikhonov result stated in Theorem \ref{thikonov}.
\begin{proof}[\textbf {Proof of Theorem \ref{thikonov}:}]
Using the previous lemmas stated in this section we get that
\begin{multline*}
\frac{d}{dt} \tilde W(\alpha,\alpha_t,\beta) \leq  -\frac{\mu}{2} V_1(\alpha,\alpha_t)+ \left(\frac{\varepsilon^2 d^2 (1+c)^2}{c^2}-\frac{\pi^2}{4}\right)\frac{1}{\varepsilon} \varepsilon W_2(\beta) -\frac{\pi^2}{4\varepsilon} \varepsilon W_2(\beta)\\ + \left(d^2-\mu e^{-\mu} \right)\abs{\alpha(1,t)}^2 + \left(\frac{\pi^2+2}{4} +F(\mu) b^2 - c \right) \abs{\beta(0,t)}^2 - q(a,\mu)\abs{\alpha_t(1,t)}^2 \\+ F(\mu)\bar p(0,\tfrac{t}{\varepsilon})+ \frac12 \abs{\bar u_t(1,t)}^2.
\end{multline*}
Defining $\varepsilon_1=\frac{c\pi }{2\abs{d}(1+c)}$
and $\varepsilon_2= \frac{\pi^2}{2\mu}$. Now, taking $\varepsilon\in(0,\varepsilon_1)$ and letting $a,b,c,d$ satisfy $(i)-(v)$ in Theorem \ref{claim_stab} and $b,c,d\in \R$ satisfying $(1),(2),(3)$ of Theorem \ref{thikonov}, we get that
\begin{equation}
\label{gronwall_thiko}
\begin{aligned}
\frac{d}{dt}\tilde{W}(\alpha,\alpha_t,\beta) &\leq -\min\left\{\frac{\mu}{2}, \frac{\pi^2}{4\varepsilon}\right\} \tilde W(\alpha,\alpha_t,\beta)+ F(\mu) \abs{\bar p(\tfrac{t}{\varepsilon},0)}^2 + \frac{1}{2} \abs{\bar u_t(1,t)}^2,\\&\leq -\frac{\mu}{2} \tilde W(\alpha,\alpha_t,\beta)+ F(\mu) \abs{\bar p(\tfrac{t}{\varepsilon},0)}^2 + \frac{1}{2} \abs{\bar u_t(1,t)}^2,
\end{aligned}
\end{equation}
where the last inequality holds for $\varepsilon\in (0,\varepsilon_2)$. Defining $\varepsilon^\ast_3 = \min\left\{\varepsilon_1,\varepsilon_2\right\}$, and taking $\varepsilon\in (0,\varepsilon^\ast_3)$ we get
from Corollary \ref{bl_coro} that
\begin{equation*}
\begin{aligned}
\frac{d}{dt}  \tilde W(\alpha,\alpha_t,\beta) &\leq - \frac{\mu}{2} \tilde W(\alpha,\alpha_t,\beta) + F(\mu)e^{-\frac{\pi^2}{4\varepsilon} t}\norm{\bar p_0}_{H^1(0,1)} + \frac{1}{2}\abs{\bar u_t(1,t)}^2, \\& \leq - \frac{\mu}{2} \tilde W(\alpha,\alpha_t,\beta) + F(\mu)e^{-\frac{\mu}{4} t}\norm{\bar p_0}_{H^1(0,1)} + \frac{1}{2}\abs{\bar u_t(1,t)}^2.
\end{aligned}
\end{equation*}
Multiplying by $e^{\frac{\mu}{2}t}$ and integrating between $0$ and $t$ we deduce
\begin{equation*}
\begin{aligned}
\tilde W(\alpha,\alpha_t,\beta)  \leq e^{\frac{-\mu}{2}t}  \left(W(\alpha(x,0),\alpha_t(x,0),\beta(x,0))+  F(\mu)\frac{\mu}{4}e^{\frac{\mu}{4} t}\norm{\bar p_0}_{H^1(0,1)} +\frac12\int_0^t  e^{\frac{\mu}{2}s} \abs{\bar u_t(1,s)}^2ds\right), 
\end{aligned}
\end{equation*}
and by using Corollary \ref{r_coro}, Proposition \ref{equivalence} and smallness conditions \eqref{small-difference}, \eqref{small-sub}, we arrive at 
\begin{equation}
\label{approximation}
\begin{aligned}
\tilde W(\alpha,\alpha_t,\beta)  &\leq O(1) e^{-\frac{\mu}{4}t}  \left(W(\alpha(x,0),\alpha_t(x,0),\beta(x,0))+\norm{p_0}^2_{H^1(0,1)}+\norm{(\bar u_0,\bar u_1)}^2_{H^1(0,1)\times L^2(0,1)} \right), \\& \leq O(\varepsilon^{3}) e^{-\frac{\mu}{4} t} .
\end{aligned}
\end{equation}
Therefore, we have that the desired result hold for any strong solution $(u,p),\bar u, \bar p$  to \eqref{system}, \eqref{eq:reduced} and \eqref{boundary_layer}, respectively. By using a suitable density argument, we can show that \eqref{approximation} holds for any mild solution $(u,p),\bar u, \bar p$  to \eqref{system}, \eqref{eq:reduced} and \eqref{boundary_layer}, respectively.
\end{proof}
\section{Conclusions and open problems}
In this paper, we have provided a singular perturbation analysis for a parabolic-hyperbolic system
composed by a fast heat equation and a slow wave equation. In particular, we have proved that, the conditions
for the reduced order system and the boundary-layer system to be exponentially system
also work for the full-system for $\varepsilon$ small enough, and the decay is in the $H^1(0,1)\times L^2(0,1) \times L^2(0,1)-$norm. Furthermore, we provide a Tikhonov approximation for the full-system trough the subsystems given by the method. On the other hand, by using a slightly different Lyapunov functional for the heat equation we have shown a better result for the expential stability, namely, we show an exponential decay in the $H^1(0,1)\times L^2(0,1) \times H^1(0,1)-$norm, however we are not able to prove a Tikhonov result for this case. \\
Before concluding this article we would like to talk about an open problem when changing a coupling term in \eqref{system}. Consider the following fast heat - slow wave coupled system 
\begin{equation}
\label{diff_coupling}
\begin{cases}
    u_{tt}  = u_{xx}, & (x,t)\in (0,1)\times (0,\infty),  \\
    u(0,t) = 0, & t\in (0,\infty),\\
    u_x(1,t) = -  au_t(1,t) + b p(0,t), & t\in (0,\infty),\\
    \varepsilon p_t  = p_{xx},& (x,t)\in (0,1)\times (0,\infty),\\
    p_x(0,t) = c p(0,t),& t\in (0,\infty),\\
    p_x(1,t) = d u_t(1,t),& t\in (0,\infty),
\end{cases}
\end{equation}
where we have the coupling $p_x(1,t) = d u_t(1,t)$ instead of $p_x(1,t) = d (1,t)$. For \eqref{diff_coupling} the resulting subsystems can be treated as in Section \ref{Stab}, and therefore they are exponentially stable under reasonable conditions. However when writing the equivalent full-system, like in Section \ref{FullStab}, we end up with
\begin{equation}
\label{full_sys}
\begin{cases}
   \varepsilon \tilde p_t =  \tilde p_{xx} + \varepsilon \left(\frac{d}{c} + xd\right)u_{tt}(1,t),& (x,t)\in (0,1)\times(0,\infty) \\
     \tilde p_x(0,t) = c \tilde p(0,t),& t\in (0,\infty),\\
     \tilde p_x(1,t) =0,& t\in (0,\infty)\\
     u_{tt}=u_{xx},&(x,t)\in (0,1)\times(0,\infty),\\u(0,t)=0,& t\in (0,\infty), \\ u_x(1,t)=-(a-\tfrac{bd}{c})u_t(1,t)+b\tilde{p}(0,t),& t\in (0,\infty). 
\end{cases}
\end{equation}
Following the same proof as we did earlier, we end up in a dead end due to the presence of the trace term $u_{tt}(1,t)$. To address this problem we have to consider more regularity in the initial data (in order to the trace to make sense), and at the moment the authors could not find an estimate that relates $u_{tt}(1,t)$ with the functional $V_1(u)$ defined in Section \ref{ReducedStab}. Also, the following system consisting on a slow heat equation coupled with a fast wave equation
\begin{equation}
\label{system1}
\begin{cases}
    \varepsilon^2 u_{tt}  = u_{xx}, & (x,t)\in (0,1)\times (0,\infty),  \\
    u(0,t) = 0, & t\in (0,\infty),\\
    u_x(1,t) = -  a\varepsilon u_t(1,t) + b p(0,t), & t\in (0,\infty),\\
    p_t  = p_{xx},& (x,t)\in (0,1)\times (0,\infty),\\
    p_x(0,t) = c p(0,t),& t\in (0,\infty),\\
    p_x(1,t) = d \varepsilon u_t(1,t),& t\in (0,\infty),
\end{cases}
\end{equation}
has not been studied so far. This system may lead to some regularity issues to address, see for instance \cite[Section 4]{marxcerpa}, where a higher regularity is needed.
\appendix
\section{Some useful inequalities}
We state some useful inequalities that will help us through the article.
\begin{lemma}
\label{a1}
Let $w\in H^1(0,1)$, then we have that
\begin{equation*}
\abs{w(1)}^2 \leq 2 \abs{w(0)}^2 + 2 \int_0^1 \abs{w_x}^2 dx.
\end{equation*}
\end{lemma}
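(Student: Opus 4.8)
The plan is to reduce the inequality to the fundamental theorem of calculus followed by two elementary estimates. First, since in one space dimension $H^1(0,1)$ embeds continuously into $C([0,1])$, the trace values $w(0)$ and $w(1)$ are well defined and the absolutely continuous representative of $w$ satisfies
\begin{equation*}
w(1) = w(0) + \int_0^1 w_x(x)\,dx.
\end{equation*}
Taking absolute values and applying the triangle inequality then gives $\abs{w(1)} \leq \abs{w(0)} + \int_0^1 \abs{w_x}\,dx$.

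Next, I would square both sides and use the elementary bound $(\alpha+\beta)^2 \leq 2\alpha^2 + 2\beta^2$, which produces
\begin{equation*}
\abs{w(1)}^2 \leq 2\abs{w(0)}^2 + 2\left(\int_0^1 \abs{w_x}\,dx\right)^2.
\end{equation*}
To conclude, I would control the last term with the Cauchy--Schwarz inequality on the unit interval, namely $\left(\int_0^1 \abs{w_x}\,dx\right)^2 \leq \int_0^1 1\,dx\cdot\int_0^1 \abs{w_x}^2\,dx = \int_0^1 \abs{w_x}^2\,dx$. Substituting this into the previous display yields exactly the claimed inequality.

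There is essentially no substantive obstacle in this argument; it is a routine trace-type estimate. The only point requiring mild care is the justification that the pointwise values $w(0)$ and $w(1)$ make sense, which is immediate from the Sobolev embedding in dimension one, and the observation that the factor arising from Cauchy--Schwarz equals one precisely because the domain has unit length.
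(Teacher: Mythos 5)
Your proof is correct and follows essentially the same route as the paper: the fundamental theorem of calculus, the triangle inequality, the elementary bound $(\alpha+\beta)^2 \leq 2\alpha^2+2\beta^2$, and a final estimate on $\left(\int_0^1 \abs{w_x}\,dx\right)^2$, which the paper obtains via Jensen's inequality and you via Cauchy--Schwarz --- identical statements on an interval of unit length.
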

\noindent This can be proved by using the reverse triangular inequality, i.e., $\abs{w(1)}-\abs{w(0)}\leq \abs{w(1)-w(0)}$ together with the identity $w(1)-w(0)= \int_0^1 w_x dx$ and Jensen's inequalitiy.

\begin{lemma}
\label{trace_poincare}
Let $p\in H^1_L(0,1)$, then $\abs{p(1)}^2 \leq \norm{p_x}_{L^2(0,1)}$.
\end{lemma}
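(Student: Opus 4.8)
The plan is to exploit the single structural feature that membership in $H^1_L(0,1)$ provides, namely $p(0)=0$, and to convert the pointwise trace value $p(1)$ into an integral of the derivative. Since every $p\in H^1(0,1)$ is absolutely continuous on $[0,1]$, I would write
\begin{equation*}
p(1)=p(1)-p(0)=\int_0^1 p_x(x)\,dx,
\end{equation*}
where the second equality uses $p(0)=0$. This is exactly the ingredient that makes the boundary value controllable by $\norm{p_x}_{L^2(0,1)}$, and no further regularity is needed.

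Next I would estimate this integral by pairing $p_x$ against the constant function $1$ and invoking Cauchy--Schwarz on $(0,1)$:
\begin{equation*}
\abs{p(1)}=\abs*{\int_0^1 p_x\cdot 1\,dx}\leq \norm{p_x}_{L^2(0,1)}\,\norm{1}_{L^2(0,1)}=\norm{p_x}_{L^2(0,1)},
\end{equation*}
because $\norm{1}_{L^2(0,1)}=1$. Squaring yields $\abs{p(1)}^2\leq \norm{p_x}_{L^2(0,1)}^2$, which is the sharp trace bound (equality holds precisely when $p_x$ is constant, i.e. for affine $p$). This squared form is the one actually invoked later in the paper, e.g. in the coercivity of $C$, where one needs $\abs{\varphi(1)}^2\leq\int_0^1\abs{\varphi_x}^2$, and in the estimate $c^2\abs{\tilde p(0,t)}^2\leq\int_0^1\abs{\tilde p_{xx}}^2$ obtained by applying the symmetric endpoint version (with $w(1)=0$) to $w=\tilde p_x$.

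There is no genuine analytic obstacle here; the only difficulty is that the inequality cannot be proved \emph{exactly as printed}. The stated bound $\abs{p(1)}^2\leq \norm{p_x}_{L^2(0,1)}$, with an un-squared norm on the right, is false in general: taking $p(x)=2x\in H^1_L(0,1)$ gives $\abs{p(1)}^2=4$ while $\norm{p_x}_{L^2(0,1)}=2$, so the claimed bound $4\leq 2$ fails, and more generally the printed inequality breaks down whenever $\norm{p_x}_{L^2(0,1)}>1$. The missing exponent is therefore a typographical slip, and the intended, provable, and everywhere-used statement is $\abs{p(1)}^2\leq \norm{p_x}_{L^2(0,1)}^2$, equivalently $\abs{p(1)}\leq \norm{p_x}_{L^2(0,1)}$, which the Cauchy--Schwarz argument above establishes in one line.
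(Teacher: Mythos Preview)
Your proof is correct and complete. The paper itself does not give a proof but merely cites an external reference (\cite[Lemma 2.1]{guzma2020boundary}), so your self-contained Cauchy--Schwarz argument via absolute continuity is in fact more informative than what appears in the paper, and your observation about the missing square on the right-hand side is accurate and well justified by the counterexample and by how the lemma is actually used downstream.
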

\begin{proof}
A straightforward application of \cite[Lemma 2.1]{guzma2020boundary}.
\end{proof}
\begin{lemma}
\label{trace_reduced}
Let $ (u,v) \in H^1_L(0,1) \times H^1(0,1)$. For every $\mu>0$, we have that 
$$-\mu\int_0^1 e^{\mu x} \abs{u_x+v}^2+ e^{-\mu x} \abs{u_x-v}^2 dx \leq -2\mu e^{-\mu} \abs{ u(1)}^2. $$
\end{lemma}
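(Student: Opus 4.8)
The plan is to reduce the weighted integral to the plain Dirichlet energy of $u$ and then exploit the boundary condition $u(0)=0$. First I would note that both exponential weights are uniformly bounded below on $[0,1]$ by the single constant $e^{-\mu}$: indeed $e^{\mu x}\geq 1\geq e^{-\mu}$ and $e^{-\mu x}\geq e^{-\mu}$ for every $x\in[0,1]$ and every $\mu>0$. This allows me to remove the weights at the cost of the common factor $e^{-\mu}$, giving
$$\int_0^1 e^{\mu x}\abs{u_x+v}^2+e^{-\mu x}\abs{u_x-v}^2\,dx \;\geq\; e^{-\mu}\int_0^1 \abs{u_x+v}^2+\abs{u_x-v}^2\,dx.$$

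Next I would apply the parallelogram identity $\abs{u_x+v}^2+\abs{u_x-v}^2=2\abs{u_x}^2+2\abs{v}^2$ pointwise and discard the nonnegative term $2\abs{v}^2$, which leaves the lower bound $2e^{-\mu}\int_0^1\abs{u_x}^2\,dx$. Since $u\in H^1_L(0,1)$ satisfies $u(0)=0$, I can write $u(1)=\int_0^1 u_x\,dx$ and use the Cauchy--Schwarz inequality to obtain $\abs{u(1)}^2\leq \int_0^1\abs{u_x}^2\,dx$, which is precisely the trace estimate already recorded as Lemma~\ref{trace_poincare}. Chaining the two inequalities yields
$$\int_0^1 e^{\mu x}\abs{u_x+v}^2+e^{-\mu x}\abs{u_x-v}^2\,dx \;\geq\; 2e^{-\mu}\abs{u(1)}^2,$$
and multiplying through by $-\mu$ gives exactly the claimed bound.

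The argument is elementary, so no step poses a serious obstacle; the only point requiring a moment's care is recognizing that the correct uniform lower bound for \emph{both} weights is $e^{-\mu}$ (attained by $e^{-\mu x}$ at $x=1$), rather than estimating each weight by its own minimum. One must also take care, after the parallelogram step, to retain the $\abs{u_x}^2$ term and discard $\abs{v}^2$: it is the gradient of $u$, and not $v$, that controls the trace $\abs{u(1)}^2$ through the Dirichlet condition at $x=0$.
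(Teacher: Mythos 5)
Your proof is correct and follows essentially the same route as the paper: bound both exponential weights below by $e^{-\mu}$, apply the parallelogram identity and discard the $\abs{v}^2$ term, then invoke the trace estimate of Lemma~\ref{trace_poincare} and multiply by $-\mu$. The only cosmetic difference is that the paper first uses $e^{\mu x}\geq 1$ and $e^{-\mu x}\geq e^{-\mu}$ separately before pulling out the common factor $e^{-\mu}$, whereas you do both bounds in one step.
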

\begin{proof}
Note that
\begin{equation*}
\begin{aligned}
\int_0^1 e^{\mu x} \abs{u_x+v}^2+ e^{-\mu x} \abs{u_x-v}^2 dx & \geq \int_0^1 (u_x+v)^2+e^{-\mu}(u_x -v)^2 ,\\& \geq  e^{-\mu} \int_0^1 2( \abs{v}^2+\abs{u_x}^2) \geq 2e^{-\mu}\int_0^1\abs{u_x}^2dx,
\end{aligned}
\end{equation*}
multiplying by $-\mu$, and using Lemma \ref{trace_poincare} we get the desired result.
\end{proof}
The following lemma is taking from \cite[Chapter 2, Remark 2.2]{krstic2008boundary}
\begin{lemma}
\label{wirtinger}
Let $w$ be a continuously differentiable function, then we have
$$ \int_0^1 w^2 dx \leq \abs{w(i)}^2 + \frac{4}{\pi^2}\int_0^1 \abs{w_x}^2 dx, \hspace{.3cm} i=0,1.$$
\end{lemma}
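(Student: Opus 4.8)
The plan is to reduce both cases to a single sharp one-sided Poincaré estimate. First I would dispose of the symmetry: the change of variables $\tilde w(x)=w(1-x)$ maps $w_x$ to $-\tilde w_x$, leaves $\int_0^1 w^2\,dx$ and $\int_0^1 w_x^2\,dx$ unchanged, and interchanges the two endpoints, so the case $i=1$ follows at once from the case $i=0$ applied to $\tilde w$. It therefore suffices to prove the equivalent rearranged form
$$ \frac{\pi^2}{4}\left(\int_0^1 w^2\,dx-\abs{w(0)}^2\right)\le\int_0^1 w_x^2\,dx. $$

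The core ingredient is the sharp Poincaré inequality for functions vanishing at the left endpoint, $\int_0^1 v^2\,dx\le\frac{4}{\pi^2}\int_0^1 v_x^2\,dx$ whenever $v(0)=0$. I would establish this by a completion-of-squares (Riccati) argument, in the spirit of the energy computations used throughout the paper: for the weight $f(x)=\tfrac{\pi}{2}\cot\!\big(\tfrac{\pi x}{2}\big)$ one checks $f'+f^2=-\tfrac{\pi^2}{4}$ and $f(1)=0$, so expanding $0\le\int_0^1(v_x-fv)^2\,dx$ and integrating the cross term by parts gives $\tfrac{\pi^2}{4}\int_0^1 v^2\,dx\le\int_0^1 v_x^2\,dx+f(0)\abs{v(0)}^2$. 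Because $v(0)=0$ and $v(x)^2=O(x^2)$ while $f(x)=O(x^{-1})$ near the origin, the endpoint contribution drops out and the estimate follows. Equivalently, one may expand $v$ in the orthonormal basis $\{\sqrt2\,\sin(\tfrac{(2n-1)\pi x}{2})\}_{n\ge1}$ of eigenfunctions of $-\phi''=\lambda\phi$ with $\phi(0)=0$, $\phi_x(1)=0$, whose smallest eigenvalue is $\lambda_1=\tfrac{\pi^2}{4}$ and whose first eigenfunction $\sin(\tfrac{\pi x}{2})$ is the extremal.

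It then remains to incorporate the endpoint value $w(0)$, and this is where I expect the main obstacle to lie. Writing $w=w(0)\,\chi+v$ with $v(0)=0$ for a fixed profile $\chi$ satisfying $\chi(0)=1$ reduces matters to the previous step for $v$ together with control of a cross term of the form $w(0)\int_0^1(\cdots)\,dx$. The sharp constant $\tfrac{4}{\pi^2}$ is attained exactly on functions vanishing at $x=0$, and the completion-of-squares weight realizing it is singular at that endpoint, so the datum $w(0)$ cannot be absorbed by the same argument; the delicate point is to show that the cross term can be arranged so that the coefficient of $\abs{w(0)}^2$ stays equal to $1$. I would treat this endpoint contribution with particular care and then pass from smooth functions to all continuously differentiable $w$ by a density argument. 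Since the estimate is classical, one may alternatively invoke \cite[Chapter 2, Remark 2.2]{krstic2008boundary} directly.
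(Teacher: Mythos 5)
Your symmetry reduction and your Riccati (completion-of-squares) proof of the one-endpoint Poincar\'e inequality --- that $\int_0^1 v^2\,dx\le \tfrac{4}{\pi^2}\int_0^1 v_x^2\,dx$ whenever $v(0)=0$, with the weight $f(x)=\tfrac{\pi}{2}\cot(\tfrac{\pi x}{2})$ and the boundary term at $x=0$ vanishing because $v^2=O(x^2)$ --- are both correct. But the step you flag as the ``delicate point,'' namely keeping the coefficient of $\abs{w(0)}^2$ equal to $1$ while retaining the constant $\tfrac{4}{\pi^2}$, cannot be carried out: the lemma as stated is false. Take $w(x)=1+x$: then $\int_0^1 w^2\,dx=\tfrac{7}{3}$, while $\abs{w(0)}^2+\tfrac{4}{\pi^2}\int_0^1 w_x^2\,dx=1+\tfrac{4}{\pi^2}<\tfrac73$; reflecting $x\mapsto 1-x$ gives a counterexample for $i=1$. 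The mechanism is exactly the cross term you were worried about: at $w\equiv 1$ the two sides are equal, and perturbing to $w=1+\sigma x$ raises the left side at first order in $\sigma$ (the term $2\sigma\int_0^1 x\,dx=\sigma$) but the right side only at second order, so no constant in front of $\int_0^1 w_x^2\,dx$ can repair the coefficient $1$ on $\abs{w(0)}^2$. Quantitatively, writing $B=1/\omega^2$ with $\omega\in(0,\tfrac{\pi}{2})$, the inequality $\int_0^1 w^2\,dx\le A\abs{w(0)}^2+B\int_0^1 w_x^2\,dx$ holds for all $C^1$ functions $w$ if and only if $A\ge\tan(\omega)/\omega$ (extremals proportional to $\cos(\omega(1-x))$); since $\tan(\omega)/\omega>1$ on this interval and blows up as $\omega\to\tfrac{\pi}{2}$, neither $A=1$ (for any finite $B$) nor $B=\tfrac{4}{\pi^2}$ (for any finite $A$) is attainable.

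For the comparison you were asked to make: the paper gives no proof at all --- Lemma \ref{wirtinger} is justified solely by the citation \cite[Chapter 2, Remark 2.2]{krstic2008boundary} --- and that reference cannot support the stated form, precisely because the form is false; what such references do provide is either the sharp inequality for functions vanishing at an endpoint (exactly your second step) or a Poincar\'e inequality with strictly larger constants, such as $\int_0^1 w^2\,dx\le 2\abs{w(i)}^2+4\int_0^1 w_x^2\,dx$. The correct inequality that your argument legitimately yields, by writing $w=w(0)+v$ with $v(0)=0$ and using the triangle inequality in $L^2(0,1)$, is $\norm{w}_{L^2(0,1)}\le\abs{w(i)}+\tfrac{2}{\pi}\norm{w_x}_{L^2(0,1)}$, equivalently $\int_0^1 w^2\,dx\le(1+\gamma)\abs{w(i)}^2+\tfrac{4}{\pi^2}\left(1+\gamma^{-1}\right)\int_0^1 w_x^2\,dx$ for every $\gamma>0$. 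This weakened form is what the paper's energy estimates should invoke (at the cost of slightly stronger hypotheses on $c$ and slightly smaller decay rates wherever Lemma \ref{wirtinger} is used); so the right conclusion of your analysis is not to close the final step but to correct the statement.
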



\bibliographystyle{plain}
\bibliography{references}


\end{document}